\crefname{equation}{}{}
\newtheorem{theorem}{Theorem}[section]
\newtheorem{lemma}[theorem]{Lemma}
\newtheorem{corollary}[theorem]{Corollary}
\newtheorem{proposition}[theorem]{Proposition}
\newtheorem*{conjecture*}{Conjecture}
\newtheorem{question}[theorem]{Question}
\theoremstyle{definition}
\newtheorem{definition}[theorem]{Definition}
\theoremstyle{remark}
\newtheorem*{remark}{Remark}
\newtheorem*{example}{Example}
\numberwithin{equation}{section}
\newcommand{\N}{\mathbb N}
\def\lp{\left(}
\def\rp{\right)}
\newcommand{\HH}{\mathbb H}
\newcommand{\cE}{\mathcal E}
\newcommand{\cM}{\mathcal M}
\newcommand{\cS}{\mathcal S}
\newcommand{\C}{\mathbb C}
\newcommand{\Q}{\mathbb Q}
\newcommand{\Z}{\mathbb Z}
\newcommand{\cQM}{\widetilde{\mathcal{M}}}
\newcommand{\lcm}{\mathrm{lcm}}
\title[$q$-multiple zeta values, quasimodular forms and partitions]{Higher level $q$-multiple zeta values with applications to quasimodular forms and partitions}
\date{\today}
\thanks{2020 {\it{Mathematics Subject Classification.}} 05A17, 11M32, 11F11, 11P81}
\keywords{$q$-series, partitions, quasimodular forms, modular forms, $q$-multiple zeta values}
\author{William Craig}
\address{Department of Mathematics, United States Naval Academy, 572C Holloway Road
Mail Stop 9E. Annapolis, MD 21402}
\email{wcraig@usna.edu}
\begin{document}

\begin{abstract}
In recent years, the generalized sum-of-divisor functions of MacMahon have been unified into the algebraic framework of $q$-multiple zeta values. In particular, these results link partition theory, quasimodular forms, $q$-multiple zeta values, and quasi-shuffle algebras. In this paper, we complete this idea of unification for higher levels, demonstrating that any quasimodular form of weight $k \geq 2$ and level $N$ may be expressed in terms of the $q$-multiple zeta values of level $N$ studied algebraically by Yuan and Zhao. We also give results restricted to $q$-multiple zeta values with integer coefficients, and we construct completely additive generating sets for spaces of quasimodular forms and for quasimodular forms with integer coefficients. We also provide a variety of computational examples from number-theoretic perspectives that suggest many new applications of the algebraic structure of $q$-multiple zeta values to quasimodular forms and partitions.
\end{abstract}

\maketitle

\section{Introduction and Statement of Results}

Multiple zeta values are natural generalizations of the famous integral values of Riemann's zeta function and are defined by
\begin{align*}
    \zeta\lp k_1, \dots, k_a \rp := \sum_{0 < n_1 < n_2 < \dots < n_a} \dfrac{1}{n_1^{k_1} n_2^{k_2} \cdots n_a^{k_a}}.
\end{align*}
These real numbers have appeared frequently across mathematics and physics. Their modern study was initiated by \cite{Hoffman97,Zagier94}, and the reader can consult \cite{BurgosGilFresan,Zhao20} for surveys on topics related to multiple zeta values and the helpful webpage of Hoffman \cite{HoffmanWeb} for a very detailed list of papers related to multiple zeta values. The algebraic structure of the vector space generated by multiple zeta values (indeed, even the fact that this space is an algebra at all) is very deep and beautiful. This theory continues to be a highly active area of research with many interesting new developments and major unsolved conjectures.

In this work, we consider $q$-generalizations of multiple zeta values, which we simply call $q$-multiple zeta values (see Section \ref{S: q-MZV} for constructions), from a number-theoretic point of view. The starting point for us is MacMahon's work on generalized divisor sums \cite{MacMahon}. These sums are defined by the $q$-series
\begin{align*}
    \mathcal U_a(q) := \sum_{0 < m_1 < m_2 < \dots < m_a} \dfrac{q^{m_1 + m_2 + \dots + m_a}}{\lp 1 - q^{m_1} \rp^2 \lp 1 - q^{m_2} \rp^2 \cdots \lp 1 - q^{m_a} \rp^2}.
\end{align*}
It was conjectured by MacMahon and proven by Andrews and Rose \cite{AndrewsRose} that the $\mathcal U_a$ are (mixed weight) quasimodular forms (see Section \ref{S: Nuts and Bolts} for formal definitions). In other language, they proved that the $\mathcal U_a(q)$ can be represented in terms of the weight $2k \geq 2$ Eisenstein series
\begin{align*}
    E_{2k}(z) := 1 - \dfrac{4k}{B_{2k}} \sum_{n \geq 1} \sigma_{2k-1}(n) q^n, \ \ \ q := e^{2\pi i z},
\end{align*}
where $B_{2k}$ are the Bernoulli numbers and $\sigma_\ell(n) := \sum_{d|n} d^\ell$ is the $\ell$th divisor sum function. One can see immediately connections between zeta values and these Eisenstein series through Euler's formulas for even zeta values, which represent the terms $\frac{4k}{B_{2k}}$ in terms of the special value $\zeta(2k)$ of the Riemann zeta function, and thus one can represent these zeta values in terms of asymptotics of $\sum_{n \geq 1} \sigma_{2k-1}(n) q^n$ as $q \to 1$ from inside the unit disk. Generalizing this theme, MacMahon's generalized sum-of-divisor functions can likewise be seen to represent the multiple zeta values as $q \to 1$ from inside the unit disk. From this point of view, then, we view these $q$-sums as a natural part of the algebraic study of multiple zeta values.

We may, however, equally well view these $q$-series from the modular perspective after the work of Andrews and Rose. Modular forms and quasimodular forms are foundational objects in number theory; see the texts \cite{123,CohenStromberg,DiamondShurman,Ono,Shimura} for many examples. The quasimodularity of MacMahon's series has lead to much study from the number-theoretic perspective in recent years \cite{AAT,AOS,AndrewsRose,Bachmann24,BachmannKuhn,BCIP,ChoieLee,CraigIttersumOno,JPS,Larson,Milas,OnoSingh,Rhoades,Rose}. In particular, it is easy to show that the Fourier coefficients of $\mathcal{U}_a(q)$ can be expressed as very natural sums of products of the multiplicities of certain classes of {\it partitions}. A partition of $n$ is a list of part sizes and multiplicities, i.e. $\lp 1^{m_1}, \dots, k^{m_k} \rp$, such that $\sum j m_j = n$. Partitions connect a wide array of topics from number theory, physics, representation theory and combinatorics, and in particular produce very nice modular or $q$-hypergeometric structures. See Andrews' excellent book \cite{AndrewsBook} for many more facts about partitions. Thus, the study of such $q$-series from a number-theoretic lens is very natural as well.

The previously mentioned algebraic point of view (i.e. the study of $q$-multiple zeta values) and the number-theoretic point of view (i.e. divisor sums, partition sums and modular forms) are united in many papers, leading up to the very important work of Bachmann and K\"{u}hn \cite{BachmannKuhn}. It is shown by Bachmann and K\"{u}hn that a generalization of MacMahon's series form spaces of $q$-multiple zeta values with special differential and algebraic structure, and they demonstrate that the entire algebra of quasimodular forms sits within this algebra of $q$-multiple zeta values. For specifics of their work, we refer the reader to Section \ref{S: q-MZV Level One}. These number-theoretic connections are deep and have, for instance, produced connections between $q$-multiple zeta values with weakly holomorphic modular forms \cite{JPS,OnoSingh}, partition congruences \cite{AOS}, higher Appell functions \cite{BCIP}, and prime numbers \cite{CraigIttersumOno}. In many of these connections, it is natural from the number-theoretic point of view to interpret the multiple divisor sums as sums over partitions into a fixed number of part sizes; thus this theory achieves a unification of $q$-multiple zeta values, partitions, and quasimodular forms. Of particular interest to us will be a generalization of a result of \cite{CraigIttersumOno} regarding producing new types of additive spanning sets for spaces of quasimodular forms.

The goal of our paper is to lift this unification to level $N$. The relevant algebraic point of view has been formulated by Yuan and Zhao \cite{YuanZhao}, who construct a natural level $N$ generalization of Bachmann and K\"{u}hn's work (see Section \ref{S: q-MZV Level N} for details). More specifically, their work generalizes in a natural way certain multiple zeta values of level $N$ constructed by restricting summands to specified congruence classes modulo $N$. Such multiple zeta values of level $N$ themselves are of significant interest in the multiple zeta value literature and have many known applications \cite{GKZ,KanekoTasaka,LalinRoy,XuZhao,Seki}. To achieve their natural $q$-generalizations, Yuan and Zhao twist the Bachmann-K\"{u}hn $q$-multiple divisor sums by $N$th root of unity\footnote{Really, any primitive $N$th root of unity will do; we make a fixed choice for convenience. In fact, the change of root of unity $\zeta_N \mapsto \zeta_N^m$ can be accounted for by a change of colors $c_j \mapsto mc_j$. Thus, Yuan and Zhao's space of $q$-multiple divisor sums of level $N$ is fully generated by any particular choice of primitive $N$th root of unity.} $\zeta_N := e^{2\pi i/N}$, yielding {\it $q$-multiple zeta values at level $N$}. The canonical examples of such functions, defined for $N \geq 1$, $\vec{k} = \lp k_1, \dots, k_a \rp \in \Z_{\geq 1}^a$, and $\vec{c} = \lp c_1, \dots, c_a \rp \in \lp \Z/N\Z \rp^a$, will come from the {\it twisted multiple divisor sums} defined by
\begin{align} \label{Eq: Twisted multiple divisor sum}
    \sigma_{\vec{k},\vec{c}}(n) := \sum_{\substack{m_1 n_1 + \dots + m_a n_a = n \\ 0 < n_1 < n_2 < \dots < n_a}} \zeta_N^{c_1 m_1 + c_2 m_2 + \dots + c_a m_a} m_1^{k_1} m_2^{k_2} \cdots m_a^{k_a}.
\end{align}
We call $\vec{k}$ the {\it weights}, $\vec{c}$ the {\it colors}, and $a$ the {\it depth} of the special $q$-multiple zeta values
\begin{align*}
    \mathcal U_{\vec{k}; \vec{c}; N}(q) := \sum_{n \geq 1} \sigma_{\vec{k},\vec{c}}(n) q^n.
\end{align*}
As in the literature on MacMahon's functions, we can interpret these twisted sums in terms of partitions of $n$ into exactly $a$ part sizes; thus we have from Yuan and Zhao a unification of partition-theoretic and $q$-multiple zeta value concepts.

Motivated by the quasimodularity theorems of Andrews and Rose \cite{AndrewsRose} and Bachmann and K\"{u}hn \cite{BachmannKuhn}, as well as follow-up work of Rose \cite{Rose} and Larson \cite{Larson} involving higher level quasimodular forms, we consider the space of quasimodular forms of level $N$ in the context of twisted multiple divisor sums and $q$-multiple zeta values of level $N$. Using the machinery of Yuan and Zhao as well as algebraic facts about quasimodular forms, we prove the following theorem (see Section \ref{S: Nuts and Bolts} for notation, which is standard in the literature on modular forms).

\begin{theorem} \label{T: Main Theorem, Cyclotomic}
    Let $f$ be a (mixed weight) quasimodular form of (maximal) weight $k \geq 1$ for the congruence subgroup $\Gamma(N)$, $N \geq 3$. Then if $k \geq 2$ or if $k=1$ and $f$ belongs to the Eisenstein space $\mathcal E_1\lp \Gamma(N) \rp$, $f$ can be represented as a linear combination of twists by $N$th roots of unity of $q^{1/N}$-multiple zeta values of level $N$ of highest weight $k$, highest depth $k$, and level $N$.
\end{theorem}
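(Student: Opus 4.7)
The plan is to proceed in three stages: first, reduce quasimodularity to modularity by exploiting the fact that the derivative operator $D = q\tfrac{d}{dq}$ preserves the algebra of level-$N$ $q$-multiple zeta values; second, realize every holomorphic Eisenstein series of level $N$ directly as a depth-one twisted $q$-MZV via (\ref{Eq: Twisted multiple divisor sum}); and third, handle level-$N$ cusp forms through a Borisov--Gunnells type decomposition expressing them as products of weight-one Eisenstein series.

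For the first step, I would invoke the standard structure theorem decomposing any quasimodular form of maximal weight $k$ for $\Gamma(N)$ as $f = \sum_{j \geq 0} D^j g_j$, where $g_j$ is a holomorphic modular form of weight $k-2j$ for $\Gamma(N)$ (the $E_2$-obstruction at level one disappears here since $E_2$ itself embeds into the $q$-MZV algebra at level one and hence at level $N$). Because the Yuan--Zhao algebra is $D$-closed and $D$ raises weight by $2$ and depth by $1$, showing that each $g_j$ of weight $w = k-2j$ lies in the $q$-MZV algebra at weight $w$ and depth at most $w$ will let $D^j g_j$ contribute weight $k$ and depth at most $w + j = k-j \leq k$, matching the stated bound. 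This reduces Theorem~\ref{T: Main Theorem, Cyclotomic} to showing that all of $M_k(\Gamma(N))$ for $k \geq 2$ and all of $\mathcal E_1(\Gamma(N))$ for $k=1$ lie inside the level-$N$ $q$-MZV algebra.

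For the Eisenstein part, the holomorphic Eisenstein series of weight $k$ and level $N$ indexed by pairs of cusps have $q^{1/N}$-expansions whose nonconstant coefficients are, after suitable normalization, precisely the depth-one twisted divisor sums $\sigma_{(k-1),(c)}(n)$ of (\ref{Eq: Twisted multiple divisor sum}). Thus each such Eisenstein series equals a $\Z[\zeta_N]$-linear combination of the basic $q^{1/N}$-series $\mathcal U_{(k-1);(c);N}$ for $c \in \Z/N\Z$, up to constant term, placing it in the algebra at depth one and weight $k$. This settles both the $k=1$ case and the Eisenstein contribution in higher weight. For cusp forms, I would invoke the Borisov--Gunnells theorem (and its extensions): the space $S_k(\Gamma(N))$ for $k \geq 2$ is spanned by monomials of degree $k$ in weight-one Eisenstein series at level $N$. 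Since the Yuan--Zhao quasi-shuffle product corresponds to pointwise multiplication of the associated $q$-series, such a monomial is realized as an element of weight $k$ and depth at most $k$ in the level-$N$ $q$-MZV algebra; combined with the Eisenstein step this covers all of $M_k(\Gamma(N))$.

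The hardest part will be the careful bookkeeping of weights, depths, and constant terms. Each weight-one Eisenstein series at level $N$ has a nontrivial constant term that must be absorbed before it can be matched against the purely-nonconstant $\mathcal U$-generators, and expanding a product of $k$ such generators via the quasi-shuffle relations produces lower-depth terms whose weights must still be shown to lie within the claimed bounds. One must also verify that the depth inflation caused by $D$ in the first step does not push beyond $k$ when combined with the depth bound from Borisov--Gunnells, and that the $k=1$ Eisenstein space interacts cleanly when it feeds into $D^j g_j$ for higher-weight quasimodular $f$. Verifying that all these compatibility conditions can be satisfied simultaneously is the delicate technical point of the proof.
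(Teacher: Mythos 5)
Your Eisenstein step contains a genuine gap, and it is precisely the point where the word ``twists'' in the statement of Theorem~\ref{T: Main Theorem, Cyclotomic} does real work. You claim that each level-$N$ Eisenstein series is, up to its constant term, a $\Z[\zeta_N]$-linear combination of the untwisted series $\mathcal{U}_{(k-1);(c);N}(q^{1/N})$, $c \in \Z/N\Z$. This is false. By Proposition~\ref{P: Expansion of Eisenstein series}, the $q^{n/N}$-coefficient of $E_{k,(a,b),N}$ is (up to normalization and the companion term) $\sum_{m\ell = n,\, m \equiv a \bmod N} \zeta_N^{b\ell}\ell^{k-1}$: after relabeling the two divisors, the power and the root-of-unity character sit on one divisor while a genuine congruence restriction mod $N$ sits on the \emph{complementary} divisor. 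By \eqref{Eq: Twisted multiple divisor sum}, a depth-one series of color $c$ has coefficients $\sum_{m\ell=n}\zeta_N^{cm}m^{k-1}$, so any linear combination over colors only produces coefficients of the form $\sum_{m\ell=n} f(m)\,m^{k-1}$ with $f$ a function of $m \bmod N$; no choice of $f$ can impose a condition on the residue of $\ell = n/m$, and one checks already at $N=3$, $k=1$ on a few coefficients that the individual $E_{1,(a,b),N}$ are not in this span. What the colors alone give you are only the $a$-averaged series $\sum_a E_{k,(a,b),N}$ (essentially $\Gamma_1(N)$-type Eisenstein series, cf.\ \eqref{Eq: Simple quasimodular case}), which are not enough to feed into the Khuri-Makdisi/Borisov--Gunnells spanning step for $\Gamma(N)$. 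The paper's proof supplies exactly the missing device: use orthogonality over the colors to force $m \equiv a \pmod N$, and then substitute $q \mapsto \zeta_N^{bda^{-1}}q$ (with minor $\gcd$ bookkeeping), observing that on the restricted support $\zeta_N^{jn} = \zeta_N^{jm\ell} = \zeta_N^{ja\ell}$ produces the needed factor $\zeta_N^{b\ell}$. This is why the theorem asserts representation by \emph{twists by $N$th roots of unity} of $q^{1/N}$-multiple zeta values; your proposal never uses such twists, so the Eisenstein step does not go through as written.

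The rest of your architecture does parallel the paper. The paper reduces quasimodularity via $\cQM\lp \Gamma, \C\rp = \cM\lp \Gamma, \C \rp[E_2]$ (Proposition~\ref{P: Polynomial in E_2}) rather than your decomposition $f = \sum_j D^j g_j$ plus an $E_2$-term; both are standard, and your weight/depth bookkeeping is consistent with the quasi-shuffle bounds of Theorem~\ref{T: Quasi-shuffle and q-MZV} and the $D$-filtration behavior, so that difference is cosmetic. For the cuspidal input, be careful with attribution and with the precise statement: the two-factor Borisov--Gunnells results for $\Gamma_1(N)$ have known low-weight exceptions (related to vanishing $L$-values), and what the paper actually uses is Khuri-Makdisi's theorem (Theorem~\ref{T: T: Eisenstein Generation Weight 1}) that arbitrary monomials in the weight-one Eisenstein series of level $N$ already contain all of $\cM_k\lp \Gamma(N), \C\rp$ for $k \geq 2$; you should cite that version. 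Your worry about constant terms is harmless, since constants are the depth-zero elements of $\mathcal{MD}_q(N)$.
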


\begin{remark}  
    We make the following remarks on Theorem \ref{T: Main Theorem, Cyclotomic}:
    \begin{enumerate}
        \item The analogous result for any congruence subgroup of $\mathrm{SL}_2\lp \Z \rp$ follows immediately by the surjectivity of the trace operator between $\Gamma(N)$ and such subgroups. In particular, the result also follow $N=2$ by taking traces $\Gamma(4) \to \Gamma(2)$. All other results of this paper follow for level 2 similarly.
        \item As was pointed out to the author be J-W. van Ittersum, this result is optimal across weights since $q$-multiple zeta values of homogeneous weight one are linear combinations of Eisenstein series. It would be interesting to see whether any generalization of classical $q$-multiple zeta values might suffice to cover these outliers, as cusp forms of weight one are interesting objects in their own right \cite{DeligneSerre}.
        \item The main proof we give for Theorem \ref{T: Main Theorem, Cyclotomic} has a computational downfall that in practice it produces lengthy very linear combination of $q$-series. We therefore also describe an alternative computational approach in Section \ref{S: Nuts and Bolts} which allows for fewer products if one allows the level to grow, once formulas for higher weight Eisenstein series are found.
    \end{enumerate}
\end{remark}

Because every $q$-multiple zeta value can be interpreted as a sum over partitions into a fixed number of part sizes, Theorem \ref{T: Main Theorem, Cyclotomic} also implies that properties of coefficients of (quasi)modular forms of any level can be expressed in terms of properties of these classes of partitions. This concept fits snugly within the philosophy of multiplicative partition theory laid out by Schneider \cite{Schneider17,SchneiderThesis,Schneider16,SchneiderSills}. Schneider's theory also interlaces quasimodular forms, partitions, and multiple zeta functions, but in a completely different manner. The mutual interaction of these approaches should be studied further, as for example has been done for instance in \cite{BachmannIttersum} with level one. Using the quasi-shuffle structure of $q$-multiple zeta values at level $N$, we can obtain even more explicit information for modular forms of weight at least two.

As will be see in Section \ref{S: Nuts and Bolts}, surjectivity results for quasimodular forms at level $N$ cannot avoid operating in the cyclotomic field $\Q\lp\zeta_N\rp$ since many forms of level $N$ have coefficients in that field. We can, however, investigate the subalgebra of quasimodular forms with integral coefficients. Fortunately, the method by which Theorem \ref{T: Main Theorem, Cyclotomic} is proven is very amenable to this alternative situation, and with very little extra work we can achieve a result for $\Z[[q]]$-forms.

\begin{theorem} \label{T: Main Theorem, Integral}
    Let $f$ be a (mixed weight) quasimodular form of highest weight $k \geq 1$ for $\Gamma(N)$, $N \geq 3$, with integral Fourier coefficients. Then if $k \geq 2$ or if $k=1$ and $f$ resides in the Eisenstein space, then $f$ can be represented as a linear combination of $q$-multiple zeta values of level $N$ with integral coefficients having highest weight $k$, highest depth $k$, and level $N$.
\end{theorem}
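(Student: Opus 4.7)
The plan is to revisit the proof of Theorem \ref{T: Main Theorem, Cyclotomic} and track integrality through each step, supplemented by one Galois-theoretic averaging. The crucial observation is that $G := \Gal\lp \Q(\zeta_N)/\Q \rp \cong \lp \Z/N\Z \rp^\times$ acts on the $q$-multiple zeta values of level $N$ in a transparent way: since the twisted divisor sum $\sigma_{\vec k, \vec c}(n)$ from \eqref{Eq: Twisted multiple divisor sum} lies in $\Z[\zeta_N]$ with evident equivariance in the colors, the automorphism $\sigma_a : \zeta_N \mapsto \zeta_N^a$ carries $\mathcal U_{\vec k; \vec c; N}$ to $\mathcal U_{\vec k; a\vec c; N}$. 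Consequently, the Galois orbit sum
\begin{align*}
    \wt{\mathcal U}_{\vec k; [\vec c]; N} := \sum_{a \in \lp \Z/N\Z \rp^\times} \mathcal U_{\vec k; a \vec c; N}
\end{align*}
has Fourier coefficients in $\Z$, since each coefficient is the Galois trace of an element of $\Z[\zeta_N]$, and it retains highest weight $k$, highest depth $k$, and level $N$.

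Now suppose $f \in \Z[[q]]$ satisfies the quasimodularity hypotheses. Theorem \ref{T: Main Theorem, Cyclotomic} produces a representation $f = \sum_j \alpha_j \mathcal U_{\vec k_j; \vec c_j; N}$ with $\alpha_j \in \Q(\zeta_N)$ and each $\mathcal U$ of the prescribed shape. Because $f$ is $G$-fixed, averaging this identity over $G$ and regrouping by orbit rewrites $f$ as a $\Q$-linear combination of the integral orbit sums $\wt{\mathcal U}_{\vec k_j; [\vec c_j]; N}$. This already yields the theorem up to controlled rational coefficients on the basis side and shows in particular that the building blocks may be chosen in $\Z[[q]]$.

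To sharpen this to a genuine $\Z$-linear representation, I would replay the inductive construction from the proof of Theorem \ref{T: Main Theorem, Cyclotomic} directly inside $\Z[[q]]$ rather than invoking it as a black box. The structure theorem for quasimodular forms, together with the depth-boosting step via the quasi-shuffle product of Yuan and Zhao, is defined over $\Z$ and so preserves integrality automatically. The only possible source of denominators is then the base case: expressing an integrally normalized Eisenstein series of level $\Gamma(N)$ (e.g., a Bernoulli-cleared variant of the standard one) as a $\Z$-linear combination of depth-one orbit sums $\wt{\mathcal U}_{(k); [(c)]; N}$. This base-case integrality is the main obstacle; it is a finite linear-algebra check in each weight using the explicit Fourier expansions of level-$N$ Eisenstein series. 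Once handled, the induction proceeds verbatim from Theorem \ref{T: Main Theorem, Cyclotomic} and produces the claimed $\Z$-linear combination in integral $q$-multiple zeta values of highest weight $k$, highest depth $k$, and level $N$.
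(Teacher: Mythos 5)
Your underlying mechanism, averaging over $\Gal\lp \Q\lp\zeta_N\rp/\Q\rp$, is indeed the paper's mechanism (compare Lemma \ref{L: Integral Eisenstein combinations} and the remark following the theorem), but the averaging step does not do what you claim. From $f=\sum_j\alpha_j\,\mathcal U_{\vec k_j;\vec c_j;N}$ and the $G$-invariance of $f$ you obtain $|G|\,f=\sum_j\sum_{a}\sigma_a(\alpha_j)\,\mathcal U_{\vec k_j;a\vec c_j;N}$, and because the scalars $\sigma_a(\alpha_j)$ vary with $a$, this cannot be regrouped into $\Q$-multiples of the unweighted orbit sums $\wt{\mathcal U}_{\vec k_j;[\vec c_j];N}$: a single orbit sum spans only a one-dimensional piece of the $G$-fixed part of the $\Q\lp\zeta_N\rp$-span of an orbit, whereas that fixed part has the same dimension as the orbit's span. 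What the averaging legitimately gives (expand each $\alpha_j$ in the $\Q$-basis of powers $\zeta_N^i$) is that $f$ lies in the $\Q$-span of the twisted traces $\sum_a\sigma_a\lp\zeta_N^i\,\mathcal U_{\vec k_j;\vec c_j;N}\rp$; these are still integral elements of $\mathcal{MD}_q(N)$ of the same highest weight and depth, so the conclusion survives, but you need these trace forms, not the plain orbit sums. Note also that the building blocks coming out of Theorem \ref{T: Main Theorem, Cyclotomic} carry root-of-unity twists in $q^{1/N}$, and the Galois action moves the twist as well as the colors, which your equivariance statement ignores.

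The second half of your plan is both unnecessary and unsupported. The theorem asks only that the building blocks have integral Fourier coefficients, not that the linear combination be over $\Z$; and your justification that the quasi-shuffle step "is defined over $\Z$" is false, since its structure constants involve the $\omega^N_{n;c}$, i.e.\ Bernoulli-type quantities divided by factorials, visible already in \eqref{Eq: Product formula level one}. More importantly, you leave the "base case" as an unverified finite check, so the sharpened claim is simply not proved. The paper's actual proof rests on an ingredient you never invoke: Lemma \ref{L: Sturm idea} (the generation weight of the algebra of quasimodular forms is independent of the coefficient field), which together with Lemma \ref{L: Integral Eisenstein combinations} gives Corollary \ref{C: Integer weight 2 generation}: the integral quasimodular forms of weights $\geq 2$ are generated by $E_2$ and the Galois traceforms of weight-one Eisenstein series. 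Applying the Galois trace to the explicit weight-one formulas from the proof of Theorem \ref{T: Main Theorem, Cyclotomic} then exhibits those generators as integral $q$-multiple zeta value combinations, and the theorem follows at once. Replacing your unverified base case by this reduction, and your orbit sums by the twisted traces above, repairs the argument.
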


\begin{remark}
    The proof shows more; indeed, all such forms arise from traces of the Galois group $\mathrm{Gal}\lp \Q\lp \zeta_N \rp/\Q \rp$ acting on certain $q$-multiple zeta values of level $N$.
\end{remark}

The results above already summarize a new and important number-theoretic structure arising from $q$-multiple zeta values; namely, all quasimodular forms of weights $k \geq 2$ are in fact $q$-multiple zeta values. This fact opens up many questions about applications; for instance, how might one utilize the algebraic properties of spaces of $q$-multiple zeta values to study quasimodular forms? One application on which we wish to focus is the production of new kinds of bases for spaces of quasimodular forms. The canonical basis for quasimodular forms at level one is formed by products of Eisenstein series of the form $E_2^a E_4^b E_6^c$. From a number-theorists point of view, this basis can be difficult to work with because of the number-theoretic difficulties of convolution sums arising from the multiplication of $q$-series. In \cite{CraigIttersumOno}, it was realized that one can produce a spanning set for the entire vector space of quasimodular forms in level one using $q$-multiple zeta values (without any convolution products) by using certain symmetric sums and properties of quasi-shuffle algebras. In this work, we generalize this phenomenon to level $N$, demonstrating linear generation for spaces of quasimodular forms by generalizing the ideas of symmetric sums.

We will begin by defining the required terms. For any $a \geq 1$, define the group $S_a^\pm := S_a \times \lp \Z/2\Z \rp^a$, where $S_a$ is the symmetric group of order $a$. Here, we view $\Z/2\Z = \{ 1,-1 \}$ as a multiplicative group. For $\sigma \in S_a$ and $s = (s_1, \dots, s_a) \in \lp \Z/2\Z \rp^a$, we define several group actions. In particular, for $\vec{k} = (k_1, \dots, k_a) \in \Z_{\geq 0}^a$, $\vec{c} = (c_1, \dots, c_a) \in \lp \Z/N\Z \rp^a$, we define the standard actions
\begin{align*}
    \sigma\lp \vec{k}, \vec{c} \rp = \lp \sigma \vec{k}, \sigma \vec{c} \rp = \lp \lp k_{\sigma(1)}, \dots, k_{\sigma(a)} \rp, \lp c_{\sigma(1)}, \dots, c_{\sigma(n)} \rp \rp
\end{align*}
and
\begin{align*}
    s \lp \vec{k}, \vec{c} \rp = \lp -1 \rp^{\varepsilon_s(\vec{k},\vec{c})} \lp \vec{k}, s \vec{c} \rp = \lp -1 \rp^{\varepsilon_s(\vec{k},\vec{c})} \lp \vec{k}, (s_1 c_1, \dots, s_a c_a) \rp, \ \ \ \varepsilon_s(\vec{k},\vec{c}) := \sum_{j=1}^a \frac{1-s_j}{2} (k_j+1)
\end{align*}
along with the composite action of $S_a^\pm$ given as
\begin{align*}
    \lp \sigma, s \rp \cdot \lp \vec{k}, \vec{c} \rp = s \lp \sigma \lp \vec{k}, \vec{c} \rp \rp.
\end{align*}
We then define the traceforms by acting on $\mathcal{U}_{\vec{k},\vec{c}}$ by acting on subscripts linearly, i.e.
\begin{align*}
    \mathcal{U}_{\vec{k},\vec{c}}^{\mathrm{sym}}(q) := \sum_{(\sigma,s) \in S_a \times \lp \Z/2\Z \rp^a} \lp \sigma, s \rp \cdot \mathcal{U}_{\vec{k},\vec{c}}(q) = \sum_{(\sigma,s) \in S_a \times \lp \Z/2\Z \rp^a} (-1)^{\varepsilon_s(\vec{k},\vec{c})} \mathcal{U}_{\sigma \vec{k}, s \sigma \vec{c}}(q).
\end{align*}
We extend this action to spaces generated by the $q$-multiple zeta values $\mathcal{U}_{\vec{k};\vec{c};N}(q)$ by linearity.

\begin{theorem} \label{T: Main Theorem, Symmetry}
    Let $N \geq 1$, $\vec{k} \in \Z_{\geq 0}^a$ and $\vec{c} \in \lp \Z/N\Z \rp^a$ be given. Then the following are true:
    \begin{enumerate}
        \item The forms $\mathcal{U}^{\mathrm{sym}}_{\vec{k},\vec{c}}(q^{1/N})$ are quasimodular forms of level $N$, and the space of such forms is closed as an algebra.
        \item The collection of all twists of the forms $\mathcal{U}^{\mathrm{sym}}_{\vec{k},\vec{c}}(q^{1/N})$ generates additively the space of all quasimodular forms of level $N$ and weight $\geq 2$. Furthermore, the intersection of this space with $\Z[[q]]$ generates additively the the space of quasimodular forms of level $N$ and weight $k \geq 2$ with integral coefficients.
    \end{enumerate}
\end{theorem}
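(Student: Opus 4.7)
The plan is to deduce both parts from Theorem~\ref{T: Main Theorem, Cyclotomic} (respectively Theorem~\ref{T: Main Theorem, Integral}) together with the quasi-shuffle (stuffle) product structure on $q$-multiple zeta values of level $N$ established by Yuan and Zhao, in the style of the level-one argument of \cite{CraigIttersumOno} but with careful tracking of the cyclotomic data and the sign action on colors.

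For part (1), quasimodularity of each $\mathcal{U}^{\mathrm{sym}}_{\vec{k},\vec{c}}(q^{1/N})$ is immediate: every summand $\mathcal{U}_{\sigma\vec{k}, s\sigma\vec{c}; N}(q^{1/N})$ is quasimodular on $\Gamma(N)$ by Yuan--Zhao, and a finite signed sum of such forms is again quasimodular on $\Gamma(N)$. The algebraic closure is the main content. Expanding a product $\mathcal{U}^{\mathrm{sym}}_{\vec{k},\vec{c}} \cdot \mathcal{U}^{\mathrm{sym}}_{\vec{k}',\vec{c}'}$ via the Yuan--Zhao quasi-shuffle identity $\mathcal{U}_{\vec{k};\vec{c}} \cdot \mathcal{U}_{\vec{k}';\vec{c}'} = \sum_\pi \mathcal{U}_{\pi \cdot ((\vec{k},\vec{c}),(\vec{k}',\vec{c}'))}$, where $\pi$ ranges over stuffed shuffles that either interleave positions or stuff a position of the first factor with one of the second (adding both weights and both colors), one sees that the combined index set at each stuffing level $\ell$ carries a natural $S_{a+b-\ell}^{\pm}$-action compatible with the $S_a^{\pm} \times S_b^{\pm}$-action on the factors. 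Grouping the double symmetrization by orbits under this larger group realizes the product as an integral linear combination of higher-depth traceforms $\mathcal{U}^{\mathrm{sym}}_{\vec{K},\vec{C}}$, yielding closure. The main obstacle is the sign bookkeeping: when positions $i, j$ merge, the weight becomes $k_i + k_j'$ and the color becomes $c_i + c_j'$, and one must verify that the weights $(-1)^{\varepsilon_s}$ interact with this fusion compatibly with the symmetrizing $S_{a+b-\ell}^{\pm}$-orbits; this is straightforward once one notes that the parity of $k_i + k_j' + 1$ and the sign action on $c_i + c_j'$ are determined by the products of the factor data.

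For part (2), Theorem~\ref{T: Main Theorem, Cyclotomic} reduces the task to expressing each twisted $\mathcal{U}_{\vec{k};\vec{c}; N}(q^{1/N})$ in the additive span of the traceforms together with their twists. I would induct on depth. In depth one, the explicit identity $\mathcal{U}^{\mathrm{sym}}_{(k);(c)} = \mathcal{U}_{(k);(c);N} + (-1)^{k+1}\mathcal{U}_{(k);(-c);N}$, together with the Galois twists $c \mapsto mc$ and the observation that depth-one quasimodular forms of weight $\geq 2$ decompose as combinations of such parity pieces, handles the base case. In depth $a > 1$, I invert the quasi-shuffle formula: each $\mathcal{U}_{\vec{k};\vec{c}}$ equals a product of two strictly lower depth $q$-multiple zeta values modulo strictly lower depth correction terms, and by induction together with the algebra closure from part (1), each such product and correction lies in the additive span of traceforms. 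The integer-coefficient claim then follows from the same argument applied with Theorem~\ref{T: Main Theorem, Integral} in place of Theorem~\ref{T: Main Theorem, Cyclotomic}, using that all quasi-shuffle structure constants and the sign weights $(-1)^{\varepsilon_s}$ are integers; hence the induction preserves the $\Z[[q]]$ structure.
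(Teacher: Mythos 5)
There is a genuine gap, and it sits at the heart of part (1). You assert that quasimodularity of $\mathcal{U}^{\mathrm{sym}}_{\vec{k},\vec{c}}(q^{1/N})$ is ``immediate'' because each summand $\mathcal{U}_{\sigma\vec{k},s\sigma\vec{c};N}(q^{1/N})$ is quasimodular by Yuan--Zhao. This is false: Theorem \ref{T: YuanZhao Closure} (and the rest of \cite{YuanZhao}) only establishes that $\mathcal{MD}_q(N)$ is a differential quasi-shuffle algebra inside $\mathcal{MZ}_q(N)$; it says nothing about individual brackets being quasimodular, and in general they are not. Already in depth one, $\mathcal{U}_{(k);(c);N}(q^{1/N})$ alone corresponds to a ``wrong-parity'' Eisenstein-type series (in level one, $\mathcal{U}_{(k)}$ with $k$ even is an odd-weight divisor-sum series) and is not quasimodular; only the signed combination $\mathcal{U}_{(k);(c)}(q^{1/N})+(-1)^{k+1}\mathcal{U}_{(k);(-c)}(q^{1/N})$ is, because it equals $\sum_{a \bmod N} G_{k+1,(a,c),N}(z)$, a sum of level-$N$ Eisenstein series. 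Establishing quasimodularity of the traceforms is precisely the content of part (1), and it requires an argument: the paper proceeds by induction on depth, using the depth-one Eisenstein identity just quoted, then the explicit quasi-shuffle product formula to show that the full symmetrization of a product of lower-depth traceforms equals $\mathcal{U}^{\mathrm{sym}}_{\vec{k};\vec{c}}$ plus lower-depth traceforms (the correction terms pair up using $\omega^N_{n;c}=(-1)^{n+1}\omega^N_{n;-c}$), so quasimodularity of the depth-$a$ traceform follows from that of the product. Your closure-under-products discussion is in the right spirit, but without a correct quasimodularity argument it does not prove part (1).

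The same misconception undermines your part (2). You propose to reduce, via Theorem \ref{T: Main Theorem, Cyclotomic}, to expressing each twisted $\mathcal{U}_{\vec{k};\vec{c};N}(q^{1/N})$ in the additive span of traceforms; but that span consists (by part (1)) of quasimodular forms, while individual $q$-multiple zeta values are generally not quasimodular, so neither your base case nor your inversion-of-the-stuffle inductive step can succeed --- e.g.\ $\mathcal{U}_{(0);(c);N}$ itself lies in no $\C$-linear combination of traceforms. The workable route (the paper's) runs the other way: show that twists of the traceforms $\mathcal{U}^{\mathrm{sym}}_{(0);(c)}(q^{1/N})$ reproduce all weight-one Eisenstein series $E_{1,\lambda,N}$ (exactly as in the proof of Theorem \ref{T: Main Theorem, Cyclotomic}) and that $E_2$ is a level-one traceform; invoke Corollary \ref{C: Construction 1} (Khuri-Makdisi generation plus $\cQM=\cM[E_2]$) to conclude that the algebra generated by twisted traceforms contains every quasimodular form of weight $\geq 2$; and then use the multiplicative closure of the symmetric space from part (1) to convert algebra generation into purely additive generation. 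The integral statement then follows from Lemma \ref{L: Sturm idea} and Galois averaging, not from the inductive scheme you describe.
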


\begin{remark}
    In level one, the more natural traceforms come from the action of the symmetric group alone, which are known to be quasimodular by \cite{BachmannLectures}, and by \cite[Theorem 4.4]{CraigIttersumOno}) make up a spanning set for quasimodular forms of level one. Thus in this work, we will use the notation of \cite{CraigIttersumOno} when dealing in level one, noting that the difference between the two definitions is only a multiplicative constant (depending on the depth).
\end{remark}

The remainder of the paper is structured as follows. In Section \ref{S: Nuts and Bolts} we lay out many preliminary details required for the main proofs. In particular, we discuss the algebra of quasimodular forms for congruence subgroups and quasi--shuffle algebras. In Section \ref{S: q-MZV}, we give the definitions of the $q$-multiple divisor sums as laid down by Bachmann and K\"{u}hn in level one and Yuan and Zhao in level $N$, respectively. We describe the algebraic theory of these spaces, in particular the computation of the quasi-shuffle product, and we determine the intersection of these spaces with the algebra $\Z[[q]]$ of integer power series. In Section \ref{S: Main Proofs}, Theorems \ref{T: Main Theorem, Cyclotomic}, \ref{T: Main Theorem, Integral} and \ref{T: Main Theorem, Symmetry} are proven.

Further sections of this paper describe number-theoretic examples and applications of our main results, with special emphasis on linearization for quasimodular forms given in Theorem \ref{T: Main Theorem, Symmetry}; that is, we produce quasimodular forms without the need to multiply out Eisenstein series. In Section \ref{S: Partition cranks}, we show how the Atkin-Garvan crank moments can be written in terms of $q$-multiple zeta values. In Section \ref{S: Primes}, we describe results related to primes in arithmetic progressions along the lines of the recent papers of the author, van Ittersum and Ono \cite{CraigIttersumOno} and of Gomez \cite{Gomez}, with special emphasis on the role that quasimodularity plays in higher levels in prime-detecting partition functions. In Section \ref{S: Ramanujan Tau} we discuss a computation of the Ramanujan $\tau$-function with potential applications, and Section \ref{S: Quadratic forms} investigates quadratic forms in terms of $q$-multiple zeta values in levels 2 and 4. Finally, Section \ref{S: Questions} poses a number of open questions regarding applications of $q$-multiple zeta values to wider areas of the theories of partitions and modular forms. These sections contain several computational examples which may serve as exercises for the reader as well as inspiration for new applications.

\section*{Acknowledgements}

The author thanks Kathrin Bringmann, Jan-Willem van Ittersum and Ken Ono for helpful discussions which have improved this manuscript. The views expressed in this article are those of the author and do not reflect the official policy or position of the U.S. Naval Academy, Department of the Navy, the Department of Defense, or the U.S. Government.

\section{Nuts and Bolts} \label{S: Nuts and Bolts}

\subsection{Quasimodular forms for congruence subgroups}

In this section, we characterize quasimodular forms of level $N$ and provide the necessary preliminaries we will use to prove our main theorems. Let $\HH := \{ z \in \C : \mathrm{Im}(z) > 0 \}$ be the complex upper half-plane. Let $\Gamma$ denote any level $N$ congruence subgroup, where $N = 1$ or $N \geq 3$, so that $\begin{psmallmatrix} -1 & 0 \\ 0 & -1 \end{psmallmatrix} \not \in \Gamma$. These are the subgroups of $\Gamma(1) := \mathrm{SL}_2\lp \Z \rp$ containing the principal congruence subgroup $\Gamma(N)$ defined by
\begin{align*}
    \Gamma(N) := \left\{ \begin{pmatrix} a & b \\ c & d \end{pmatrix} \in \mathrm{SL}_2\lp \Z \rp : \begin{pmatrix} a & b \\ c & d \end{pmatrix} \equiv \begin{pmatrix} 1 & 0 \\ 0 & 1 \end{pmatrix} \bmod{N} \right\}.
\end{align*}
We will also have occasion to mention the particular congruence subgroups
\begin{align*}
    \Gamma_0(N) &:= \left\{ \begin{pmatrix} a & b \\ c & d \end{pmatrix} \in \mathrm{SL}_2\lp \Z \rp : \begin{pmatrix} a & b \\ c & d \end{pmatrix} \equiv \begin{pmatrix} * & * \\ 0 & * \end{pmatrix} \bmod{N} \right\}, \\
    \Gamma_1(N) &:= \left\{ \begin{pmatrix} a & b \\ c & d \end{pmatrix} \in \mathrm{SL}_2\lp \Z \rp : \begin{pmatrix} a & b \\ c & d \end{pmatrix} \equiv \begin{pmatrix} 1 & * \\ 0 & 1 \end{pmatrix} \bmod{N} \right\},
\end{align*}
where here $*$ denotes the permission of any congruence class modulo $N$.

For a positive integer $k \geq 1$, a congruence subgroup $\Gamma$, and a character $\chi$, a {\it modular form} of weight $k$ for $\Gamma$ with Nebentypus (or more generally, multiplier system) $\chi$ is a holomorphic function $f : \HH \to \C$ which is bounded near each rational number and, for $\gamma = \begin{psmallmatrix} a & b \\ c & d \end{psmallmatrix} \in \Gamma$, satisfies the functional equation
\begin{align*}
    f\lp \gamma z \rp := f\lp \dfrac{az + b}{cz + d} \rp = \chi(d) \lp cz + d \rp^k f(z).
\end{align*}
We say that a modular form has level $N$ if $f$ is modular with respect to a congruence subgroup of level $N$. Observe that modular forms of level $N$ are $N$-periodic since $\begin{psmallmatrix} 1 & N \\ 0 & 1 \end{psmallmatrix} \in \Gamma(N)$, and therefore modular forms have Fourier expansions. Thus, we define for any ring $R \subseteq \C$ the spaces $\cM_k\lp \Gamma, R, \chi \rp$ as the ring of modular forms of weight $k$ for $\Gamma$ with Fourier coefficients in $R$. If $\chi$ is trivial, we write $\cM_k\lp \Gamma, R \rp$, which in this paper will usually be the case. It is then easy to see that we have a graded algebra
\begin{align*}
    \cM\lp \Gamma, R, \chi \rp := \bigoplus_{k \geq 0} \cM_k\lp \Gamma, R, \chi \rp. 
\end{align*}
We note that $\cM_0\lp \Gamma(N), R \rp = R$. We also define the space of cusp forms $\mathcal{S}_k\lp \Gamma, R, \chi \rp$ to be the subspace of $\cM_k\lp \Gamma, R, \chi \rp$ having vanishing at all the cusps (i.e. $\chi(d)^{-1} (cz+d)^{-k} f(\gamma z)$ has vanishing constant term in its Fourier expansion for all $\gamma \in \mathrm{SL}_2\lp\Z \rp$, not just $\Gamma$), and we let the Eisenstein space $\mathcal{E}_k\lp \Gamma, R, \chi \rp$ be its orthogonal complement with respect to the Petersson inner product \cite{CohenStromberg}. We will in fact use the construction of the Eisenstein spaces as that space generated by all {\it Eisenstein series}, but the two formulations are equivalent.

One often considers modular forms for other congruence subgroups $\Gamma_0(N)$ and $\Gamma_1(N)$; we mostly ignore such considerations because of the well-known fact that traces yield surjective maps from $\Gamma(N)$-modular forms to modular forms of any level $N$-congruence subgroups. We also remind the reader that for a subgroup $\Gamma' \leq \Gamma$, we have $\cM_k\lp \Gamma, R, \chi \rp \subseteq \cM_k\lp \Gamma', R, \chi \rp$, and the trace operators (i.e. operators constructed by sums over cosets) give a surjective maps realizing this. This fact justifies the previous assertions that we need only work with $\Gamma(N)$, and indeed even that we can ignore finitely many cases of $\Gamma(N)$. We refer the reader to \cite{123,CohenStromberg,DiamondShurman,Ono,Shimura} for other useful facts about modular forms. In the remainder of this section, we describe the relevant parts of the theory of quasimodular forms, first in level 1 and subsequently in level $N$.

\subsubsection{Quasimodular forms in level 1}

We now formulate the basic elements of the theory of quasimodular forms for $\Gamma(1) = \mathrm{SL}_2\lp \Z \rp$; for a formal definition, see \cite{123}. We will use $\cQM_k\lp \Gamma, R \rp$ to denote the space of quasimodular forms of weight $k$ for $\Gamma$ with coefficients in $R$. The foundational quasimodular forms in level one are the Eisenstein series from the introduction, which we recall are defined for $k \geq 2$ even by
\begin{align*}
    E_k(z) = 1 - \dfrac{2k}{B_k} \sum_{n \geq 1} \sigma_{k-1}(n) q^n.
\end{align*}
It is well known that $E_k \in \cM\lp \Gamma(1), \C \rp$ for $k \geq 4$ and $E_2 \not \in \cM\lp \Gamma(1), \C \rp$ but $E_2 \in \cQM\lp \Gamma(1), \C \rp$.\footnote{It is also well-known that there are no quasimodular forms of weight 1 and level 1.} The central result describing quasimodular forms in level 1 is given below.

\begin{lemma} \label{L: Level 1 generation}
    We have $\cQM\lp \Gamma(1), \C \rp = \C[G_2, G_4, G_6]$.
\end{lemma}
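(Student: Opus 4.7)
The plan is to prove both containments separately, then wrap up with algebraic independence. Recall first the classical fact (due to Hurwitz, and standard in any modular forms reference such as \cite{123, CohenStromberg, Ono}) that $\mathcal{M}(\Gamma(1), \C) = \C[E_4, E_6]$, with $E_4$ and $E_6$ algebraically independent. I would state this upfront and use it as a black box, since reproving it would take us far afield.

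For the inclusion $\C[E_2, E_4, E_6] \subseteq \widetilde{\mathcal{M}}(\Gamma(1), \C)$, I would simply note that $E_4, E_6$ are honest modular forms (hence quasimodular of depth $0$), that $E_2$ is quasimodular of weight $2$ and depth $1$ by its well-known near-modular transformation
\[
    E_2\!\lp \tfrac{az+b}{cz+d} \rp = (cz+d)^2 E_2(z) - \tfrac{6i}{\pi} c(cz+d),
\]
and that the space of quasimodular forms is closed under multiplication. So any polynomial in $E_2, E_4, E_6$ lies in $\widetilde{\mathcal{M}}(\Gamma(1), \C)$.

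The reverse inclusion is the substantive direction, and here I would use the canonical decomposition of quasimodular forms by depth. Concretely, I would prove by induction on the depth $p$ that any $f \in \widetilde{\mathcal{M}}_k\lp \Gamma(1), \C \rp$ of depth $p$ can be written uniquely as
\[
    f = \sum_{i=0}^p f_i E_2^i, \qquad f_i \in \mathcal{M}_{k-2i}\lp \Gamma(1), \C \rp.
\]
The key input is that if $f$ has quasimodular transformation $f|_k\gamma = \sum_{i=0}^p f_i(z)\bigp{\tfrac{c}{cz+d}}^i$, then the top coefficient $f_p$ is itself modular of weight $k-2p$ (this comes from comparing the transformation laws of $f$ and of $E_2^p$, using that subtracting $f_p E_2^p$ kills the leading near-modular term). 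Iterating lowers the depth. Once this decomposition is established, each $f_i$ is a polynomial in $E_4, E_6$ by the classical structure theorem, so $f \in \C[E_2, E_4, E_6]$.

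Finally, algebraic independence of $\{E_2, E_4, E_6\}$ follows from the algebraic independence of $\{E_4, E_6\}$ together with the fact that $E_2$ is not modular: any nontrivial polynomial relation could be graded by powers of $E_2$, and the highest-$E_2$-power piece would force a modular form to equal a nonzero scalar multiple of $E_2^p$ times something, contradicting modularity. I expect the main obstacle to be the bookkeeping in the inductive extraction of the modular top coefficient $f_p$ — this is where one has to carefully manipulate the quasimodular cocycle and verify that the correction $f - f_p E_2^p$ genuinely has strictly smaller depth; the rest of the argument is essentially formal once the depth filtration is in hand.
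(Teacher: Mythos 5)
Your argument is correct, and it is exactly the standard one: the paper itself gives no proof of this lemma, treating it as the classical Kaneko--Zagier structure theorem, and your route amounts to combining the statement the paper quotes as Proposition \ref{P: Polynomial in E_2} (that $\cQM\lp \Gamma, \C \rp = \cM\lp \Gamma, \C \rp[E_2]$) specialized to $\Gamma(1)$ with the classical fact $\cM\lp \Gamma(1), \C \rp = \C[E_4, E_6]$. The only bookkeeping point, which you already flag, is that in the depth-reduction step one subtracts a nonzero constant multiple of $f_p E_2^p$ rather than $f_p E_2^p$ itself, since $\lp E_2^p \rp\big|_{2p}\gamma$ has depth-$p$ coefficient $\lp \tfrac{12}{2\pi i} \rp^p f_p$-independent constant; with that constant inserted the correction genuinely has smaller depth and the induction closes. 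Note also that the equality of rings asserted in the lemma does not require the algebraic independence of $E_2, E_4, E_6$, so your final paragraph is a bonus rather than a needed step (though your transcendence argument for $E_2$ over $\cM\lp \Gamma(1), \C \rp$ is sound).
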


We also require the action of the differential operator
\begin{align*}
    D := q \dfrac{d}{dq}
\end{align*}
on spaces of quasimodular forms. For $\ell \geq 0$, observe that $D^\ell : q^n \mapsto n^\ell q^n$. Ramanujan famously proved the differential identities
\begin{align*}
    DE_2 = \dfrac{E_2^2 - E_4}{12}, \ \ \ DE_4 = \dfrac{E_2 E_4 - E_6}{3}, \ \ \ DE_6 = \dfrac{E_2 E_6 - E_4^2}{2}.
\end{align*}
These identities along with Proposition \ref{L: Level 1 generation} yield the following closure property.

\begin{lemma} \label{L: Derivative Level 1}
    The algebras $\cQM\lp \Gamma(1), \C \rp$ and $\cQM\lp \Gamma(1), \Q \rp$ are closed under the action of $D$.
\end{lemma}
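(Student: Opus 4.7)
The plan is to use Lemma~\ref{L: Level 1 generation} together with the Ramanujan identities in a completely formal way. First I would observe that by Lemma~\ref{L: Level 1 generation}, every element of $\cQM\lp \Gamma(1), \C \rp$ is a polynomial $P(E_2, E_4, E_6)$ with coefficients in $\C$, and by restricting to polynomials with rational coefficients one obtains $\cQM\lp \Gamma(1), \Q \rp = \Q[E_2, E_4, E_6]$ (the fact that rationally generated combinations of the $E_{2k}$ do produce all $\Q$-valued quasimodular forms follows because the generators $E_{2k}$ themselves have rational Fourier coefficients and are algebraically independent).

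Next, I would recall that the operator $D = q \tfrac{d}{dq}$ is a derivation on the ring $\C[[q]]$ of formal power series. Hence, for any polynomial $P$ in three variables,
\begin{align*}
    D\bigl(P(E_2,E_4,E_6)\bigr) = \pd{P}{E_2}(E_2, E_4, E_6)\cdot DE_2 + \pd{P}{E_4}(E_2, E_4, E_6)\cdot DE_4 + \pd{P}{E_6}(E_2, E_4, E_6)\cdot DE_6.
\end{align*}
Thus, to establish closure under $D$, it suffices to verify that each of $DE_2$, $DE_4$, $DE_6$ lies in $\Q[E_2, E_4, E_6]$, since then polynomial expressions in $E_2, E_4, E_6$ are mapped to polynomial expressions in $E_2, E_4, E_6$, and rational coefficients are preserved because the partial derivatives of a rational polynomial are rational.

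Finally, I would invoke Ramanujan's three identities recalled just above the lemma, namely $DE_2 = (E_2^2-E_4)/12$, $DE_4 = (E_2E_4-E_6)/3$, and $DE_6 = (E_2 E_6 - E_4^2)/2$. Each of these exhibits $DE_{2k}$ as an explicit element of $\Q[E_2, E_4, E_6]$. Substituting into the Leibniz expansion above yields $DP(E_2,E_4,E_6) \in \C[E_2,E_4,E_6]$ in general, and $DP(E_2,E_4,E_6) \in \Q[E_2,E_4,E_6]$ whenever $P \in \Q[E_2,E_4,E_6]$, proving both closures simultaneously. There is essentially no obstacle here; the content of the lemma is precisely that Ramanujan's identities have rational coefficients, so the mild point to flag is verifying that the rational-coefficient version of Lemma~\ref{L: Level 1 generation} holds, which reduces to algebraic independence of $E_2, E_4, E_6$ plus the fact that each generator already lies in $\Q[[q]]$.
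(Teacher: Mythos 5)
Your argument is correct and is exactly the paper's route: the paper obtains this lemma by combining Lemma \ref{L: Level 1 generation} with Ramanujan's identities, implicitly using that $D$ is a derivation, just as you spell out. Your extra remark on the rational-coefficient version (rationality of the generators' Fourier coefficients plus algebraic independence) is a reasonable way to justify the $\Q$-statement, which the paper leaves implicit.
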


This gives us a basic description of the space of quasimodular forms of level 1 from which we now build intuition for the further theory. We note for instance \cite{AtkinGarvan,KanekoZagier} for some of the earliest systematic work on quasimodular forms.

\subsubsection{Quasimodular forms in level $N$}

We now describe how the theory of quasimodular forms extends to level $N$ for $N \geq 3$. We first make use of the following important result, establishing $E_2$ as fundamental.

\begin{proposition}[{\cite[Proposition 20]{123}}] \label{P: Polynomial in E_2}
    Let $\Gamma$ be a congruence subgroup. Then $\cQM\lp \Gamma, \C \rp = \cM\lp \Gamma, \C \rp[E_2]$.
\end{proposition}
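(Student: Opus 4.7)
The plan is to prove both containments, with the substantive direction proceeding by induction on the depth of a quasimodular form. First I would record the working definition of a quasimodular form of weight $k$ and depth $\leq s$ for $\Gamma$: a holomorphic $f : \HH \to \C$ (with suitable cusp conditions) for which there exist holomorphic $f_0 = f, f_1, \ldots, f_s : \HH \to \C$ such that
$$(cz+d)^{-k} f(\gamma z) = \sum_{j=0}^{s} f_j(z) \lp \frac{c}{cz+d} \rp^j \qquad \text{for all } \gamma = \begin{psmallmatrix} a & b \\ c & d \end{psmallmatrix} \in \Gamma.$$
The containment $\cM(\Gamma, \C)[E_2] \subseteq \cQM(\Gamma, \C)$ is then immediate, since $E_2$ is itself quasimodular of weight $2$ with an explicit transformation law, $\cM(\Gamma, \C)$ sits inside $\cQM(\Gamma, \C)$ as forms of depth $0$, and the transformation rule above is easily seen to be preserved under sums and products.

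For the reverse containment, I would induct on the depth $s$ of $f \in \cQM_k(\Gamma, \C)$. The base case $s = 0$ is simply the definition of a modular form of weight $k$. For the inductive step, the crucial structural fact is that the top coefficient $f_s$ appearing in the transformation of a depth-$s$ quasimodular form is itself a genuine modular form of weight $k - 2s$ for $\Gamma$. I would prove this by applying the transformation law to a product $\gamma_1 \gamma_2$ in two ways, using the cocycle identity $j(\gamma_1 \gamma_2, z) = j(\gamma_1, \gamma_2 z)\, j(\gamma_2, z)$ for $j(\gamma,z) = cz+d$, and then comparing the coefficients of the leading power $(c/(cz+d))^s$ after expansion. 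The same bookkeeping shows, more generally, that each $f_j$ is quasimodular of weight $k - 2j$ and depth $\leq s - j$.

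The inductive reduction then leverages the classical quasimodular transformation
$$E_2(\gamma z) = (cz+d)^2 E_2(z) - \frac{6i}{\pi}\, c(cz+d),$$
which identifies $E_2$ as quasimodular of weight $2$, depth $1$, with top coefficient $-6i/\pi$. A binomial-type argument (iterating the transformation of $E_2$) shows that $E_2^s$ is quasimodular of weight $2s$ and depth exactly $s$ with an explicit nonzero top coefficient $\kappa_s = (-6i/\pi)^s$. Since $f_s \in \cM_{k-2s}(\Gamma, \C)$ is a classical modular form by the previous paragraph, the difference $g := f - \kappa_s^{-1} f_s \cdot E_2^s$ remains quasimodular of weight $k$ for $\Gamma$, and by construction its depth-$s$ coefficient in the transformation law vanishes, so $g$ has depth $\leq s-1$. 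The inductive hypothesis then places $g$ in $\cM(\Gamma, \C)[E_2]$, whence $f = g + \kappa_s^{-1} f_s E_2^s$ lies there too.

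The main obstacle is the claim that the top coefficient $f_s$ is a classical modular form of weight $k-2s$; this is the only step that is more than bookkeeping, since it requires carefully unpacking the cocycle relation for the transformation law rather than reading anything directly off the definition. Given that fact together with the explicit transformation of $E_2$, the remaining components --- multiplicativity of depth under products, identification of the top coefficient of $E_2^s$, and the termwise cancellation in the induction --- are routine, and the proof terminates in finitely many steps since the depth of $f$ is finite.
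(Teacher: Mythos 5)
Your argument is correct; note that the paper offers no proof of this proposition at all---it is quoted with a citation to Proposition 20 of \cite{123}---and your induction on the depth, using the cocycle comparison $j(\gamma_1\gamma_2,z)=j(\gamma_1,\gamma_2 z)\,j(\gamma_2,z)$ to show that the top coefficient $f_s$ is a genuine modular form of weight $k-2s$ and then killing the depth by subtracting $\kappa_s^{-1} f_s E_2^s$ with $\kappa_s=(-6i/\pi)^s$, is essentially the standard proof found in that reference. In a complete write-up you should also make explicit the uniqueness of the coefficients $f_j$ (which requires that $c/(cz+d)$ takes infinitely many values as $\gamma$ ranges over the congruence subgroup $\Gamma$, so that the polynomial identity forces coefficientwise equality) and the moderate-growth conditions at the cusps ensuring that $f_s$ is holomorphic there, but these are routine.
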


It is also useful to know that the generation of algebras of quasimodular forms does not have any nontrivial dependence on the field of coefficients, which is encapsulated in the result below.

\begin{lemma} \label{L: Sturm idea}
    Let $\Gamma$ be a congruence subgroup. Then if the algebra $\cQM\lp \Gamma, \Q \rp$ is generated as an algebra by a subset with maximal weight $k$, then the same is true for $\cQM\lp \Gamma, K \rp$ for any field $\Q \subseteq K \subseteq \C$.
\end{lemma}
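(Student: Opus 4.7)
The plan is to reduce the statement to the well-known existence of a $\Q$-rational basis for the space of modular forms $\cM_m(\Gamma, \C)$, from which the identity $\cQM_m(\Gamma, K) = K \otimes_\Q \cQM_m(\Gamma, \Q)$ follows; any $\Q$-generating set of $\cQM(\Gamma, \Q)$ is then automatically a $K$-generating set of $\cQM(\Gamma, K)$ of the same maximal weight.

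First I would reduce to modular forms. Proposition \ref{P: Polynomial in E_2} gives $\cQM(\Gamma, \C) = \cM(\Gamma, \C)[E_2]$, and the same identity descends to any intermediate coefficient field $\Q \subseteq R \subseteq \C$: given $f \in \cQM_m(\Gamma, R)$, its $E_2$-decomposition $f = \sum_j f_j E_2^j$ with $f_j \in \cM_{m-2j}(\Gamma, \C)$ is uniquely determined and can be solved by $R$-rational linear algebra on Fourier coefficients (using $E_2 \in \Z[[q]]$), forcing $f_j \in \cM_{m-2j}(\Gamma, R)$. Since $E_2 \in \cQM_2(\Gamma, \Q)$, it suffices to prove the analogue of the lemma for $\cM$ in place of $\cQM$.

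Second, the key step is to establish $\cM_m(\Gamma, K) = K \otimes_\Q \cM_m(\Gamma, \Q)$ for each weight $m$, which follows from the existence of a $\Q$-rational basis of $\cM_m(\Gamma, \C)$. This is a standard consequence of Galois descent: cuspidal eigenforms have Fourier coefficients in some number field (Shimura), and Eisenstein series of level $N$ may be written with coefficients in $\Q(\zeta_N)$, so $\cM_m(\Gamma, \C)$ admits a basis over a finite Galois extension $L_0/\Q$. Applying Hilbert's Theorem 90 to the semilinear action of $\mathrm{Gal}(L_0/\Q)$ on $\cM_m(\Gamma, L_0)$ yields $\cM_m(\Gamma, \Q) \otimes_\Q L_0 = \cM_m(\Gamma, L_0)$, hence $\dim_\Q \cM_m(\Gamma, \Q) = \dim_\C \cM_m(\Gamma, \C)$. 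Given a $\Q$-rational basis $\{f_1, \dots, f_d\}$, any $f \in \cM_m(\Gamma, K)$ expanded as $f = \sum c_i f_i$ with $c_i \in \C$ satisfies a finite $\Q$-rational linear system with $K$-valued right-hand side (bounded via Sturm's theorem, hence the lemma's nickname), forcing $c_i \in K$.

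Finally, combining these: a $\Q$-algebra generating set of $\cQM(\Gamma, \Q)$ with maximal weight $k$ remains, after inclusion into $\cQM(\Gamma, K) = K \otimes_\Q \cQM(\Gamma, \Q)$, a $K$-algebra generating set of $\cQM(\Gamma, K)$ of the same maximal weight. The principal obstacle is the rationality input in the second step: although standard, producing a $\Q$-rational basis of $\cM_m(\Gamma(N), \C)$ when $N \geq 3$ is not entirely elementary and requires Galois averaging to descend from the natural $\Q(\zeta_N)$-rational (or larger number-field) structure of Eisenstein and cuspidal bases, a subtlety the paper itself flags in the remarks preceding the lemma.
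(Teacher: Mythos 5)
Your argument is correct, but it is worth noting that the paper does not prove this lemma at all: its ``proof'' is a citation to \cite[Theorem 10.12]{Shimura} and \cite[Lemma 3]{Rustom}, which package exactly the rationality/base-change content you reconstruct. What you do differently is supply the argument behind those citations: reduce to modular forms via Proposition \ref{P: Polynomial in E_2} (using that $E_2$ is $\Q$-rational and transcendental over $\cM\lp\Gamma,\C\rp$, so the $E_2$-decomposition is unique), establish $\dim_\Q \cM_m\lp\Gamma,\Q\rp = \dim_\C \cM_m\lp\Gamma,\C\rp$ by Galois descent from a number-field-rational basis, and then deduce $\cQM_m\lp\Gamma,K\rp = K \otimes_\Q \cQM_m\lp\Gamma,\Q\rp$ by finite rational linear algebra on $q$-expansion coefficients, which immediately transports any $\Q$-generating set of maximal weight $k$ to a $K$-generating set of the same maximal weight. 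This buys self-containedness at the cost of leaning on two standard but nontrivial inputs that the cited theorems are designed to encapsulate: (i) that the coefficientwise Galois action preserves $\cM_m\lp\Gamma(N),\C\rp$ and that a basis rational over a number field exists in \emph{all} weights (your appeal to Hecke eigenforms is loose in weight one and for oldforms; the clean route is the $q$-expansion principle, or simply the cited results), and (ii) a small ordering issue: your first step already asserts that the components $f_j$ of the $E_2$-decomposition of an $R$-rational quasimodular form are $R$-rational, which itself requires the rational structure on $\cM_{m-2j}\lp\Gamma,\C\rp$ established only in your second step, so the two steps should be run in the opposite order (first rational bases of the graded pieces, then one linear-algebra argument for the products $g_{j,i}E_2^j$). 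Neither point is a genuine gap, since the facts invoked are true and standard; your proof is a valid, more explicit substitute for the paper's citation.
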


\begin{proof}
    See for example \cite[Theorem 10.12]{Shimura} or \cite[Lemma 3]{Rustom}.
\end{proof}

Because of Lemma \ref{L: Sturm idea}, we need not be very concerned with the field of definition for the Fourier coefficients of modular forms we consider. In particular, this lemma will render Theorem \ref{T: Main Theorem, Cyclotomic} and Theorem \ref{T: Main Theorem, Integral} equivalent.

By Proposition \ref{P: Polynomial in E_2} and by taking traces, we can now describe $\cQM\lp \Gamma, \C \rp$ for any congruence subgroup $\Gamma$ of level $N$ by describing the space $\cM\lp \Gamma(N), \C \rp$. While results as strong and straightforward as Lemma \ref{L: Level 1 generation} do not exist in general level, we will summarize two methods of ``almost" generating $\cM\lp \Gamma(N), \C \rp$; in particular, we will only miss out on the subspace $\cS_1\lp \Gamma(N), \C \rp$.

\subsubsection{Basics of Eisenstein series of level $N$}

The main results we will require for generating $\cQM\lp \Gamma, \C \rp$ require facts about Eisenstein series for level $N \geq 3$.\footnote{Recall that generating $\cQM\lp \Gamma(4), \C \rp$ is enough to generate $\cQM\lp \Gamma(2), \C \rp$ by taking traces.} Let $\lambda = \lp a,b \rp \in  \Z^2/N\Z^2$; we then define a two-variable Eisenstein series of weight $\ell \geq 1$ by
\begin{align} \label{Eq: Two-Var Eisenstein series}
    E_{\ell,\lambda, N}(z,s) = \sum_{\substack{(m,n) \equiv \lambda \bmod{\Z^2/(N\Z)^2} \\ (m,n) \not = (0,0)}} \dfrac{y^{s/2}}{\lp mz + n \rp^\ell \left| mz + n \right|^s}.
\end{align}
As usual we have $z = z_1 + z_2 i \in \HH$. The series \eqref{Eq: Two-Var Eisenstein series} converges for $\mathrm{Re}(s) + \ell > 2$ by the classical Hecke trick, and continues meromorphically to all $s \in \C$ such that\footnote{Here, $\Gamma(s)$ is the classical gamma function. Although we use $\Gamma$ to denote both the gamma function and congruence subgroups, it is always clear from context which is intended.} $\Gamma\lp \lp \frac s2 \rp + \ell \rp E_{\ell,\lambda,N}(z,s)$ is entire (see \cite[Theorem 9.7]{Shimura}). With this analytic continuation, we define
\begin{align} \label{Eq: Hecke trick}
    E_{\ell,\lambda,N}(z) := E_{\ell,\lambda,N}(z,0).
\end{align}
The function \eqref{Eq: Hecke trick} is holomorphic for all $\ell \geq 1$ except for $\ell = 2$, in which case $E_{2,\lambda,N}(z) + \frac{\pi}{z_2}$ is holomorphic. Following standard arguments, each $E_{\ell,\lambda,N}(z)$, $\ell \not = 2$ is a holomorphic modular form for $\Gamma(N)$ and $E_{2,\lambda,N}(z)$ is a quasimodular form for $\Gamma(N)$. These Eisenstein series span the full space of Eisenstein series for $\Gamma(N)$ \cite[Ch. 7]{Miyake}. Apart from the modularity of Eisenstein series, we will need the Fourier expansions of these series at the cusp at $i\infty$. The lemma below gives these expansions.

\begin{proposition}[{\cite[Equations 9.4--9.5]{Shimura}}] \label{P: Expansion of Eisenstein series}
    Let $k \geq 1$, $N \geq 3$ be integers and $\lambda = (a,b) \in \lp \Z/N\Z \rp^2$. Then we have
    \begin{align*}
        E_{k,\lambda,N}(z) = \dfrac{(-1)^k}{\lp 2\pi i \rp^k N^k \Gamma(k)} \bigg[ A + B &+ \sum_{\ell > 0} \sum_{\substack{m>0 \\ m \equiv a \bmod{N}}} \ell^{k-1} \zeta_N^{b\ell} q^{m\ell/N} \\ &+ \sum_{\ell > 0} \sum_{\substack{m>0 \\ m \equiv -a \bmod{N}}} (-1)^k \ell^{k-1} \zeta_N^{-b\ell} q^{m\ell/N} \bigg],
    \end{align*}
    where we define
    \begin{align*}
        A := \begin{cases}
            \lp -2\pi i \rp^{-k} N^k \Gamma(k) Z_{b,N}^k(0) & \text{if } a = 0, \\ 0 & \text{otherwise},
        \end{cases}
    \end{align*}
    and
    \begin{align*}
        B := \begin{cases}
            2^{-1} Z_{b,N}^1\lp - \dfrac 12 \rp & \text{if } k=1 \text{ and } a \not = 0, \\ \lp 4\pi y \rp^{-1} & \text{if } k=2, \\ 0 & \text{otherwise},
        \end{cases}
    \end{align*}
    where
    \begin{align*}
        Z_{b,N}^k(s) := \sum_{0 \not = m \equiv b \bmod{N}} \dfrac{1}{m^k |m|^{2s}}.
    \end{align*}
\end{proposition}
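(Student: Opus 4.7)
The plan is to derive the Fourier expansion by reducing to the absolutely convergent range via the Hecke trick, applying the Lipschitz summation formula in the inner variable, and then taking the limit $s\to 0$ while tracking the non-holomorphic correction terms that account for $B$.

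First, I would split the lattice sum defining $E_{k,\lambda,N}(z,s)$ according to whether $m=0$ or $m\neq 0$. The $m=0$ contribution is nonzero only when $a\equiv 0\pmod N$; it equals $y^{s/2} Z_{b,N}^{k}(s)$, and at $s=0$ it produces the summand $A$. For the $m\neq 0$ contribution, write $n = b + Nj$ so the inner sum becomes a shifted lattice sum over $j\in\Z$ of the form $\sum_j \bigl((mz+b)+Nj\bigr)^{-k}|(mz+b)+Nj|^{-s}$ with fixed $m\equiv a\pmod N$.

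Next, in the region of absolute convergence I would apply the Lipschitz/Hurwitz formula
\begin{align*}
\sum_{j\in\Z}\frac{1}{(\xi+Nj)^{k}} \;=\; \frac{(-2\pi i)^{k}}{N^{k}\,\Gamma(k)}\sum_{\ell\geq 1}\ell^{k-1} e^{2\pi i \ell\xi/N}
\end{align*}
with $\xi = mz+b$, $\operatorname{Im}(\xi)>0$ (and its complex conjugate version when $m<0$, which supplies the sign $(-1)^{k}$ and the twist $\zeta_{N}^{-b\ell}$). Collecting the exponentials $e^{2\pi i\ell b/N}=\zeta_{N}^{b\ell}$, summing over $m>0$ with $m\equiv a\pmod N$ and over $m<0$ with $-m\equiv -a\pmod N$, and pulling out the common prefactor $\frac{(-1)^{k}}{(2\pi i)^{k}N^{k}\Gamma(k)}$ yields exactly the two displayed $q$-series in the statement.

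The main obstacle is the boundary cases $k=1$ and $k=2$, where the original double sum is not absolutely convergent and the naive Lipschitz formula does not apply. Here I would invoke the Hecke trick in the form given just before the statement: carry out the above calculation uniformly in $s$ for $\operatorname{Re}(s)$ large (where Poisson summation in $j$ is justified), obtain a meromorphic expression in $s$ via the functional equation for the Hurwitz zeta, and then continue to $s=0$. The analytic continuation produces two subtle contributions: for $k=2$ one inherits the non-holomorphic term $(4\pi y)^{-1}$ coming from the pole of the completed inner zeta at $s=0$, and for $k=1$, $a\not\equiv 0$ the limit picks up the half-integer value $\tfrac12 Z_{b,N}^{1}(-\tfrac12)$ from the $n=0$ direction that is lost by the conditional ordering. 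These are exactly the cases collected into $B$. Verifying that no further correction terms appear, by comparing against the known transformation law of $E_{k,\lambda,N}$, completes the proof.
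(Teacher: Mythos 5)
The paper does not prove this proposition at all; it is imported verbatim from Shimura (Equations 9.4--9.5), so there is no internal argument to compare against. Your outline is the standard derivation that the cited source itself follows in spirit: split off the $m=0$ line (giving $A$), apply Lipschitz/Poisson summation in the $n$-variable with period $N$ for each fixed $m\neq 0$ (the $m<0$ terms handled by the reflection $(m,n)\mapsto(-m,-n)$, which is what supplies $(-1)^k$ and $\zeta_N^{-b\ell}$ --- ``complex conjugate version'' is a loose but harmless way to say this), and use the Hecke parameter $s$ to justify the manipulation and to continue to $s=0$ in the weights $k=1,2$. That is the right architecture, and for $k\geq 3$ your argument is already complete as stated.

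The one place where your sketch papers over the actual content is the case analysis producing $B$. Saying the $k=1$ constant is picked up ``from the $n=0$ direction that is lost by the conditional ordering'' is not an accurate description and would not survive being written out: the correction terms come from the zero-frequency ($\ell=0$) term of the Poisson summation over $n$, i.e. from $\frac{y^{s/2}}{N}\int_{\mathbb R}(mz+t)^{-k}|mz+t|^{-s}\,dt$ summed over $0\neq m\equiv a\ (N)$ and then continued to $s=0$. For $k\geq 3$ this archimedean integral vanishes at $s=0$ and the accompanying Dirichlet series in $m$ is regular there, so no correction appears; for $k=2$ the integral vanishes to first order while the $m$-sum has a simple pole at $s=0$, and the product leaves the non-holomorphic residue proportional to $y^{-1}$, which is the $(4\pi y)^{-1}$ term; for $k=1$ the integral tends to $-\pi i\,\mathrm{sgn}(m)$ and the regularized sum of signs is exactly a value of the partial zeta function $Z^1_{\cdot,N}$ at $-\tfrac12$. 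If you carry this out, be careful to check which congruence class (the $m$-class $a$ or the $n$-class $b$, depending on the normalization of $\lambda$) the regularized sign-sum runs over, and to verify the $a=0$ versus $a\neq 0$ dichotomy in the $k=1$ case; these bookkeeping points are precisely where the stated forms of $A$ and $B$ come from, and your final appeal to ``comparing against the known transformation law'' is not a substitute for that computation. With the zero-frequency analysis done explicitly, your proof is complete and matches the classical one.
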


These Fourier expansions will give a direct link between Eisenstein series of level $N$ and $q$-multiple zeta values of level $N$.

\subsubsection{Generation by Eisenstein series of level $N$}

We now give two results from the literature regarding the generation of the algebras $\cM\lp \Gamma(N), \C \rp$. We first state the main result of \cite{Khuri-Makdisi}, which has a strong focus on weight one.

\begin{theorem}[{\cite[Theorem 5.1]{Khuri-Makdisi}}] \label{T: T: Eisenstein Generation Weight 1}
    Let $N \geq 3$. Then the algebra generated by the Eisenstein $E_{1,\lambda,N}(z)$ as $\lambda = (a,b)$ ranges over $\lp \Z/ N\Z \rp^2$, contains all modular forms of weights $\geq 2$ for $\Gamma(N)$.
\end{theorem}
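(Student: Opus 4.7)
The natural framework is algebraic geometry on the compactified modular curve $X(N) = \HH^*/\Gamma(N)$, on which weight-$k$ modular forms for $\Gamma(N)$ are global sections of the line bundle $\omega^{\otimes k}$, where $\omega$ is the Hodge bundle; thus $\cM_k\lp \Gamma(N), \C \rp = H^0\lp X(N), \omega^{\otimes k} \rp$. Let $V \subseteq H^0(X(N), \omega)$ denote the linear span of the weight-one Eisenstein series $E_{1,\lambda,N}$ as $\lambda$ ranges over $\lp \Z/N\Z \rp^2$. The theorem asks to show that $V^k = H^0\lp X(N), \omega^{\otimes k} \rp$ for every $k \geq 2$, where $V^k$ is the image of the $k$-fold multiplication map $V^{\otimes k} \to H^0\lp \omega^{\otimes k} \rp$. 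My plan reduces this to two ingredients: base-point-freeness of $V$ on $X(N)$, and a standard surjectivity lemma for multiplication of sections on smooth projective curves.

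For base-point-freeness at the cusps, this is a direct computation with Proposition \ref{P: Expansion of Eisenstein series}: by inspecting the Fourier expansion I can exhibit for each cusp a choice of $\lambda = (a,b)$ such that $E_{1,\lambda,N}$ has a nonzero leading coefficient (either the constant term $A$ when $a = 0$, or the first nontrivial Fourier coefficient otherwise). At an interior point $[\tau] \in Y(N) := \HH/\Gamma(N)$, non-vanishing follows from the well-known fact that the weight-one Eisenstein series separate points on $Y(N)$ for $N \geq 3$, e.g.\ via their theta-function realization. For the second ingredient, I would use the base-point-free pencil trick: choose $s_0, s_1 \in V$ with no common zero on $X(N)$ and exploit the short exact sequence $0 \to \omega^{\otimes(k-1)} \to \omega^{\otimes k} \oplus \omega^{\otimes k} \to \omega^{\otimes(k+1)} \to 0$ given by $(t_0, t_1) \mapsto s_1 t_0 - s_0 t_1$. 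The associated long exact sequence in cohomology shows that $V \otimes H^0\lp \omega^{\otimes k} \rp \twoheadrightarrow H^0\lp \omega^{\otimes(k+1)} \rp$ whenever $H^1\lp \omega^{\otimes(k-1)} \rp = 0$, which by Serre duality holds once $\deg \omega^{\otimes(k-1)} > 2g(X(N)) - 2$. For $N \geq 3$, $\deg \omega$ is positive on $X(N)$ and the genus estimate is mild enough that this vanishing does hold for all $k \geq 2$, giving the inductive step $V^k = \cM_k \Rightarrow V^{k+1} = \cM_{k+1}$.

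The induction is anchored at $k = 2$ by an explicit Fourier-coefficient computation: using Proposition \ref{P: Expansion of Eisenstein series} to compute the products $E_{1,\lambda,N} E_{1,\lambda',N}$ and a Sturm-bound comparison against a pre-computed basis of $\cM_2\lp \Gamma(N), \C \rp$. The main obstacle will be precisely this base case, because each individual $E_{1,\lambda,N}$ lies in the Eisenstein subspace and so cusp forms in $\cM_2\lp \Gamma(N), \C \rp$ must arise entirely from cancellations in products (in the spirit of $\Delta = (E_4^3 - E_6^2)/1728$ in level one). To handle this uniformly, one must verify that $\dim \mathrm{Sym}^2 V$ minus the dimension of the relations among products equals $\dim \cM_2\lp \Gamma(N), \C \rp$, or equivalently that the multiplication map $\mathrm{Sym}^2 V \to H^0\lp \omega^{\otimes 2} \rp$ is surjective onto the cuspidal subspace as well as onto the Eisenstein subspace. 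This latter surjectivity is the true heart of the theorem and is where the more delicate algebraic-geometric input (or a direct finite verification for each small $N$, coupled with a genus-growth argument for large $N$) is required.
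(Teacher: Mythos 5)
You should first note that the paper offers no proof of this statement at all: it is quoted verbatim from \cite[Theorem 5.1]{Khuri-Makdisi}, so the only comparison available is with that source, and measured against it your outline has a genuine gap --- in fact two. The inductive engine you propose breaks at the bottom: the base-point-free pencil trick gives $V\cdot \cM_k\lp\Gamma(N),\C\rp=\cM_{k+1}\lp\Gamma(N),\C\rp$ only when $H^1\lp X(N),\omega^{k-1}\rp=0$, and your claim that this vanishing ``does hold for all $k\geq 2$'' is false. Since $K_{X(N)}\cong\omega^{2}(-C)$ with $C$ the cuspidal divisor, Serre duality gives $H^1\lp X(N),\omega\rp\cong \cS_1\lp\Gamma(N)\rp^{\vee}$, which is nonzero for infinitely many $N$ (already for $N=23$, where $\eta(z)\eta(23z)\in\cS_1\lp\Gamma_1(23)\rp\subseteq\cS_1\lp\Gamma(23)\rp$); the degree bound $\deg\omega^{j}=\tfrac{j}{2}\lp 2g-2+\varepsilon_\infty\rp>2g-2$ (with $g$, $\varepsilon_\infty$ the genus and cusp number of $X(N)$) is guaranteed only for $j\geq 2$. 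Hence your induction step is valid only from weight $3$ upward, and surjectivity in weights $2$ \emph{and} $3$ must both be established separately; you flagged weight $2$ but not weight $3$.

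Moreover the weight-$2$ case, which you yourself call ``the true heart of the theorem,'' is exactly the content of Khuri-Makdisi's result and cannot be dispatched by ``a direct finite verification for each small $N$, coupled with a genus-growth argument for large $N$'' --- that is a restatement of the difficulty, not an argument. Base-point-freeness of $V$ is far too weak an input to force $\Sym^2 V\to H^0\lp X(N),\omega^{2}\rp$ to be surjective: $V$ is in general a \emph{proper} subspace of $\cM_1\lp\Gamma(N),\C\rp$ (it misses the weight-one cusp forms), and base-point-free subseries of ample linear systems need not be projectively normal, so no Riemann--Roch/general-position argument of the type you run in higher weights can produce the cuspidal part of weight $2$. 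In \cite{Khuri-Makdisi} the real work lies precisely here: the moduli/theta interpretation of the $E_{1,\lambda,N}$ is what yields generation in the low weights, after which the higher weights follow by arguments in the spirit of yours; even the fact you invoke as ``well known,'' that the weight-one Eisenstein series have no common zero on $Y(N)$, is one of the nontrivial inputs of that paper rather than something to be waved through. As written, your proposal reduces the theorem to its hardest part and leaves that part unproved.
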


We can then obtain immediately as a corollary the following result on the generation of $\cQM\lp \Gamma, \C \rp$.

\begin{corollary} \label{C: Construction 1}
    Let $N \geq 2$ be an integer. Then $\cQM\lp \Gamma, \C \rp$ is generated as an algebra by the Eisenstein series $E_{1,\lambda,N}(z)$, $E_2(z)$, and any basis for the space $\cS_1\lp \Gamma, \C \rp$.
\end{corollary}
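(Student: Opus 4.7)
The plan is to string together the three earlier structural results: Proposition \ref{P: Polynomial in E_2} to reduce the problem from quasimodular forms to modular forms, Theorem \ref{T: T: Eisenstein Generation Weight 1} to handle all weights $k \geq 2$ via Eisenstein products, and then a direct cusp-Eisenstein decomposition in weight one to close the last gap. Finally, I would dispatch the case $N=2$ by the trace trick that has already been invoked earlier in the section.

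More concretely, first I would assume $N \geq 3$ so that $\Gamma(N)$ is torsion-free and the results of the preceding subsections apply verbatim. By Proposition \ref{P: Polynomial in E_2}, every quasimodular form for $\Gamma$ lies in the polynomial ring $\cM(\Gamma,\C)[E_2]$, so it is enough to prove that $\cM(\Gamma,\C)$ is generated as an algebra by the level $N$ weight one Eisenstein series $E_{1,\lambda,N}(z)$ together with a basis of $\cS_1(\Gamma,\C)$. Now decompose $\cM(\Gamma,\C)=\bigoplus_{k \geq 0} \cM_k(\Gamma,\C)$. The weight zero part is just $\C$, which is contained in any unital algebra. By Theorem \ref{T: T: Eisenstein Generation Weight 1}, every modular form of weight $k \geq 2$ for $\Gamma(N)$ can be written as a polynomial in the $E_{1,\lambda,N}$; in particular the same conclusion holds over $\Gamma$ since $\cM_k(\Gamma,\C) \subseteq \cM_k(\Gamma(N),\C)$ for any level $N$ congruence subgroup.

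For weight one, decompose $\cM_1(\Gamma,\C)=\cE_1(\Gamma,\C)\oplus\cS_1(\Gamma,\C)$ with respect to the Petersson inner product, as was set up in the paragraph defining $\cE_k$ and $\cS_k$. The Eisenstein subspace $\cE_1(\Gamma,\C)$ is, by the construction we adopted, spanned by the series $E_{1,\lambda,N}$ as $\lambda$ ranges over $(\Z/N\Z)^2$, so these forms together with any chosen basis of $\cS_1(\Gamma,\C)$ span $\cM_1(\Gamma,\C)$. Combining the three weight ranges gives generators of $\cM(\Gamma,\C)$ as an algebra, and adjoining $E_2$ then yields $\cQM(\Gamma,\C)$ by Proposition \ref{P: Polynomial in E_2}.

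Finally, for $N=2$, I would use the surjectivity of the trace operator $\cQM(\Gamma(4),\C) \twoheadrightarrow \cQM(\Gamma(2),\C)$ already invoked in the running discussion: applying the argument above at level $N=4$, every quasimodular form for $\Gamma(2)$ is the trace of a polynomial expression in $E_2$, the $E_{1,\lambda,4}$, and a basis of $\cS_1(\Gamma(4),\C)$, which then descends to a polynomial in the claimed generators at level $2$ (the trace commutes with the $\C$-algebra structure at the level of coset sums, and the traced Eisenstein series $E_{1,\lambda,2}$ are linear combinations of the $E_{1,\lambda,4}$). The only potential subtlety is checking that the weight one cusp forms genuinely need to be thrown in separately — Theorem \ref{T: T: Eisenstein Generation Weight 1} only yields weights $\geq 2$, so they are indeed not obtainable from the Eisenstein generators alone; this is exactly why the corollary includes a basis of $\cS_1(\Gamma,\C)$ in the list. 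No serious obstacle arises beyond bookkeeping of these weight-one considerations.
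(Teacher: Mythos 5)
Your proposal is correct and follows essentially the same route as the paper: reduce quasimodularity to modularity via Proposition \ref{P: Polynomial in E_2}, invoke Theorem \ref{T: T: Eisenstein Generation Weight 1} for weights $\geq 2$, handle weight one by the Eisenstein--cusp decomposition (with the cusp basis included among the generators), and settle $N=2$ by taking traces from $\Gamma(4)$. The paper's proof is simply a condensed version of this argument, so no further comparison is needed.
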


\begin{proof}
    By taking traces from $\Gamma(N)$ to $\Gamma$ and from $\Gamma(4)$ to $\Gamma(2)$, we need only prove the result for $\Gamma(N)$ for $N \geq 3$. The remainder follows from Proposition \ref{P: Polynomial in E_2} and Theorem \ref{T: T: Eisenstein Generation Weight 1}.
\end{proof}

Thus, the Eisenstein series of weights $\leq 2$ are the foundational objects for describing modular forms in level $N \geq 2$. However, in order to obtain results in larger weights, this would require taking the product of many Eisenstein series. For future applications, it may be convenient to allow Eisenstein series of larger weights to enter into calculations. To this end, we state a result of Raum and Xia \cite{RaumXia}. 

\begin{theorem}[{\cite[Theorem 1]{RaumXia}}] \label{T: Two Eisenstein product}
    Let $k,\ell$, and $N$ be positive integers. Then there is a positive integer $N_0$ such that
    \begin{align*}
        \cM_{k+\ell}\lp \Gamma(N), \C \rp \subseteq \cE_{k+\ell}\lp \Gamma(N) \rp + \cE_k\lp \Gamma(N_0) \rp \cdot \cE_\ell\lp \Gamma(N_0) \rp.
    \end{align*}
    Furthermore, if $k+\ell \geq 3$ then $N_0$ is explicitly computable, and there is an explicitly computable $N_1$ such that
    \begin{align*}
        \cM_{k+\ell}\lp \Gamma(N), \C \rp \subseteq \cE_k\lp \lcm\lp N_0, NN_1 \rp \rp \cdot \cE_\ell\lp \lcm\lp N_0, N_1 \rp \rp.
    \end{align*}
\end{theorem}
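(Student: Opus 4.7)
The plan is to reduce the first inclusion to a non-degeneracy statement for the Petersson pairing. First, I would use the orthogonal decomposition $\cM_{k+\ell}\lp \Gamma(N), \C \rp = \cE_{k+\ell}\lp \Gamma(N) \rp \oplus \cS_{k+\ell}\lp \Gamma(N) \rp$ coming from the Petersson inner product, so that subtracting off the Eisenstein part reduces the first claim to showing that every cusp form $f \in \cS_{k+\ell}\lp \Gamma(N) \rp$ lies in the span of products $E_k \cdot E_\ell$ as $E_k$ and $E_\ell$ range over Eisenstein series of weights $k$ and $\ell$ at a sufficiently high level $N_0$.

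To establish this reduction, I would compute the inner product $\langle f, E_k \cdot E_\ell \rangle$ via Rankin--Selberg unfolding (or, adelically, via a Whittaker expansion), expressing it as a special value of an L-function of $f$ twisted by the Dirichlet characters that parameterise the Eisenstein series. Non-vanishing of enough twisted L-values of $f$, obtained by varying these characters through higher levels $N_0$, would force $f = 0$, yielding the first inclusion for some $N_0 = N_0(N,k,\ell)$. For the second inclusion, under the hypothesis $k + \ell \geq 3$, I would further argue that each generator $E_{k+\ell,\lambda,N}$ of $\cE_{k+\ell}\lp \Gamma(N) \rp$ itself decomposes as a linear combination of products of lower-weight Eisenstein series at a controlled level; this uses the convolution structure of the Fourier coefficients laid out in Proposition \ref{P: Expansion of Eisenstein series}, together with elementary Dirichlet character identities, and the condition $k+\ell \geq 3$ is exactly what prevents the relevant Dirichlet L-value in the constant term of the product from sitting at a pole.

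The technically delicate part is quantitative. A soft contradiction argument gives existence of $N_0$, but the \emph{explicit} $N_0$ and $N_1$ require tracking both the analytic conductor of $f$ entering into the non-vanishing of the twisted L-values, and the ramification level of the product $E_k \cdot E_\ell$ viewed as a form on a principal congruence subgroup. Balancing these two effects into a clean $\lcm$ formula of the shape appearing in the theorem is the main obstacle, and it is what forces one to allow a factor $N_1$ possibly much larger than $N$. A secondary difficulty is the boundary weight $k+\ell = 2$, where the weight-one Eisenstein algebra can fail to surject onto $\cS_2\lp \Gamma(N) \rp$ (owing to the existence of exotic weight-one cusp forms), which explains why no explicit $N_0$ is asserted in that case.
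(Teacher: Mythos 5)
First, a point of comparison: the paper does not prove this statement at all --- it is quoted verbatim from Raum--Xia \cite{RaumXia}, and the only internal commentary is the remark that $N_0$ and $N_1$ come from Theorems 4.4 and 5.2 of that paper (with $N_0$ expressed via a Sturm bound). So your proposal has to be measured against the actual Raum--Xia argument, and in broad outline you have reconstructed the standard route: split off the Eisenstein part with the Petersson product, pair a cusp form $f$ against products $E_k \cdot E_\ell$, unfold \`a la Rankin--Selberg to twisted $L$-values of $f$, and deduce spanning from nonvanishing of enough twists as the level of the Eisenstein series grows. That is indeed the skeleton used in this circle of results (Kohnen--Zagier, Borisov--Gunnells, Dickson--Neururer, Raum--Xia), so the strategy is not misguided.

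However, two of your specific claims conceal genuine gaps. First, for $k+\ell = 2$ (the case that is the whole point of Raum--Xia) the unfolding lands at the \emph{central} point of $L(f \otimes \chi, s)$, and your assertion that ``a soft contradiction argument gives existence of $N_0$'' is not tenable there: to know that some twist has nonvanishing central value one needs deep theorems (Waldspurger, Bump--Friedberg--Hoffstein, Murty--Murty type nonvanishing of quadratic or character twists), and these are exactly the ineffective inputs that explain why the theorem asserts an explicit $N_0$ only for $k+\ell \geq 3$, where the relevant $L$-values are non-central and effective nonvanishing is available. This is the heart of the proof, not a quantitative afterthought. Second, your explanation of the boundary case --- that $k+\ell=2$ is delicate ``owing to the existence of exotic weight-one cusp forms'' --- is incorrect: only weight-one \emph{Eisenstein} series enter the construction, so weight-one cusp forms are irrelevant; the obstruction at weight $2$ is the possible vanishing of central twisted $L$-values of weight-two cusp forms and the ineffectivity of the nonvanishing theorems just mentioned. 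With those two points repaired (and the absorption of $\cE_{k+\ell}(\Gamma(N))$ into products at controlled level argued carefully, say via explicit Eisenstein identities of Khuri-Makdisi type rather than a vague appeal to Proposition \ref{P: Expansion of Eisenstein series}), your sketch would align with the cited proof.
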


\begin{remark}
    We leave make the following remarks on Theorem \ref{T: Two Eisenstein product}:
    \begin{enumerate}
        \item The value of $N_0$ is computed in \cite[Theorem 4.4]{RaumXia} in terms of the Sturm bound on $\cM_{k+\ell}\lp \Gamma_0(N) \rp$, and $N_1$ can be computed from \cite[Theorem 5.2]{RaumXia}. To be precise, $N_0$ is, for $\Gamma_0(N)$-modular forms, $N$ times the Sturm bound for $\cM_{k+\ell}\lp \Gamma_0(N) \rp$.
        \item The benefit of Theorem \ref{T: Two Eisenstein product} from a computational perspective is that products of at most two Eisenstein series appear, which reduces the number of terms appearing in the representation of a given modular form by allowing the level to vary after quasi-shuffle product machinery is applied. The cost is that one must work in larger spaces of modular forms.
    \end{enumerate}
\end{remark}

Thus, if we allow Eisenstein series in larger weights to enter into our consideration, we can represent modular forms of larger weights using relatively simple products. We may thus derive the following corollary immediately.

\begin{corollary} \label{C: Construction 2}
    Each element of $\cQM\lp \Gamma, \C \rp$ of weight $\geq 2$ can be expressed as a polynomial in $E_2$ whose coefficients are linear combinations of products of at most two Eisenstein series (of level possibly larger than $N$) or products of Eisenstein series of weight one and level $N$.
\end{corollary}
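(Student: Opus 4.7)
The plan is to derive the corollary directly from Proposition \ref{P: Polynomial in E_2} combined with Theorems \ref{T: T: Eisenstein Generation Weight 1} and \ref{T: Two Eisenstein product}, in close analogy with the proof of Corollary \ref{C: Construction 1}. First I would apply the usual trace argument from $\Gamma(N)$ down to $\Gamma$ (and from $\Gamma(4)$ to $\Gamma(2)$ if needed) in order to reduce the problem to the case $\Gamma = \Gamma(N)$ with $N \geq 3$, where the quoted theorems are available in their stated form.

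Next, given $f \in \cQM\lp \Gamma(N), \C \rp$ of weight $k \geq 2$, Proposition \ref{P: Polynomial in E_2} yields a decomposition
\begin{align*}
    f = \sum_{j=0}^{\lfloor k/2 \rfloor} a_j E_2^j, \qquad a_j \in \cM_{k-2j}\lp \Gamma(N), \C \rp.
\end{align*}
For each coefficient $a_j$ of weight $k - 2j \geq 2$, I would apply Theorem \ref{T: Two Eisenstein product} to express $a_j$ as the sum of an Eisenstein series of weight $k - 2j$ for $\Gamma(N)$ and a linear combination of products of pairs of Eisenstein series at a possibly larger level. As an alternative for the same coefficient, Theorem \ref{T: T: Eisenstein Generation Weight 1} permits one to instead write $a_j$ as a polynomial in weight-one Eisenstein series of level $N$; the corollary as stated allows either option to be used coefficient by coefficient. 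Reassembling the result into $\sum_j a_j E_2^j$ yields the desired representation.

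The main subtlety I expect is the handling of weight-one coefficients, which arise precisely for odd $k$ in the term $a_{(k-1)/2}$. Neither Theorem \ref{T: T: Eisenstein Generation Weight 1} nor Theorem \ref{T: Two Eisenstein product} applies directly at weight one, but any weight-one Eisenstein form for $\Gamma(N)$ is by definition a linear combination of the level-$N$ weight-one Eisenstein series, and such an expression already satisfies the conclusion as a ``product'' consisting of a single factor. Weight-one cusp form contributions are implicitly excluded, matching the hypotheses of Theorems \ref{T: Main Theorem, Cyclotomic} and \ref{T: Main Theorem, Integral}. Modulo this point, the argument is essentially a direct quotation of the three preceding results and so the proof will be very brief.
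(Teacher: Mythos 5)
Your argument is essentially the paper's own: the corollary is stated there as following ``immediately'' from Proposition \ref{P: Polynomial in E_2} together with Theorems \ref{T: T: Eisenstein Generation Weight 1} and \ref{T: Two Eisenstein product} after the usual trace reduction to $\Gamma(N)$, which is exactly the derivation you spell out. Your explicit treatment of the weight-one coefficients arising for odd total weight (writing the Eisenstein part as single weight-one Eisenstein series and noting that weight-one cusp form contributions are tacitly excluded) matches the implicit convention of the paper, so nothing is missing relative to its unwritten proof.
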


We will close this section with a brief discussion of the relationship between these Eisenstein series, whose coefficients naturally live in $\Q\lp\zeta_N\rp$, and quasimodular forms of level $N$ with coefficients in $\Z[[q]]$.

\begin{lemma} \label{L: Integral Eisenstein combinations}
    Suppose a quasimodular form $f$ of weight $k$ and level $N$ has integral Fourier coefficients apart from possibly the constant term. Then such forms are linear combinations of the traceforms
    \begin{align*}
        E_{k,(a,b),N}^{\mathrm{int}} := \sum_{\sigma \in \mathrm{Gal}\lp \Q\lp \zeta_N \rp/\Q \rp} \sigma E_{k,(a,b),N},
    \end{align*}
    where as usual $\mathrm{Gal}(K/\Q)$ denotes the Galois group of the field extension $K/\Q$.
\end{lemma}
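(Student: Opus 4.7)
The plan is to exploit a natural action of the Galois group $G := \mathrm{Gal}\lp \Q\lp \zeta_N \rp/\Q \rp$ on the span of the Eisenstein series $E_{k,(a,b),N}$ and to extract the representation of $f$ via a standard averaging argument. Throughout, I would work with the normalized Eisenstein series so that their nonconstant Fourier coefficients lie in $\Q\lp \zeta_N \rp$; the transcendental prefactor $(-1)^k (2\pi i)^{-k} N^k / \Gamma(k)$ appearing in Proposition \ref{P: Expansion of Eisenstein series} is Galois-invariant and so may be safely pulled out of the action.

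First I would show that $G$ permutes the Eisenstein basis by acting on the second coordinate. Direct inspection of the Fourier expansion in Proposition \ref{P: Expansion of Eisenstein series} reveals that the nonconstant coefficients of $E_{k,(a,b),N}$ are built from the factors $\zeta_N^{\pm b\ell}$, so for $\sigma_m \in G$ corresponding to $\zeta_N \mapsto \zeta_N^m$ (with $\gcd(m,N)=1$), a short matching of Fourier expansions yields
\begin{align*}
    \sigma_m \cdot E_{k,(a,b),N} \;=\; E_{k,(a,mb),N}.
\end{align*}
Consequently, the traceform $E^{\mathrm{int}}_{k,(a,b),N}$ is exactly the sum over the $G$-orbit of $(a,b)$, weighted by the order of the stabilizer. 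Next, the hypothesis that $f$ has integral Fourier coefficients away from the constant term ensures that $\sigma(f) - f$ has vanishing Fourier coefficients in every positive degree for each $\sigma \in G$. Expanding $f$ (within its Eisenstein component) as $f = \sum_{(a,b)} c_{(a,b)} E_{k,(a,b),N}$ and applying the averaging identity
\begin{align*}
    f \;=\; \frac{1}{|G|} \sum_{\sigma \in G} \sigma(f) \;=\; \frac{1}{|G|} \sum_{(a,b)} c_{(a,b)}\, E^{\mathrm{int}}_{k,(a,b),N},
\end{align*}
up to an adjustment in the constant term, exhibits $f$ as a $\C$-linear combination of traceforms.

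The main obstacle is twofold. First, the claim implicitly requires $f$ to lie in the Eisenstein span of $\cM_k\lp \Gamma(N), \C \rp$: cuspidal contributions are not captured by linear combinations of $E^{\mathrm{int}}_{k,(a,b),N}$. One must therefore appeal to the Eisenstein--cuspidal decomposition and restrict (or separately handle) the cuspidal part, which in the setting of the paper's main theorems is either absent or dealt with by other means (for instance, by allowing $E_2$-factors via Proposition \ref{P: Polynomial in E_2}). Second, the constant-term contributions $A$ and $B$ appearing in Proposition \ref{P: Expansion of Eisenstein series} involve the special values $Z^k_{b,N}$ and (for $k=2$) the non-holomorphic Hecke-trick term; one must verify that the Galois-averaged constant terms of the traceforms collectively span enough to absorb the possibly non-integral constant term of $f$. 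Both verifications are routine but require careful bookkeeping of the orbit structure.
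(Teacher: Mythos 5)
Your proposal is essentially the paper's own argument: both rest on the observation that $\sigma_m$ acts on Fourier expansions by $E_{k,(a,b),N} \mapsto E_{k,(a,mb),N}$, so Galois-invariance of the integral (nonconstant) coefficients of $f$ forces the Eisenstein series to occur in full orbits, i.e.\ as traceforms, your averaging over $G$ being just a repackaging of the paper's ``same coefficient across the orbit'' step. The two caveats you raise (restriction to the Eisenstein span and bookkeeping of the constant terms, normalization by the $(2\pi i)^{-k}$ factor) are likewise left implicit in the paper's proof, which tacitly assumes $f$ is given by a formula in the $E_{k,(a,b),N}$ and handles mixed weights by the weight grading, so your treatment is, if anything, slightly more careful about scope.
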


\begin{proof}
    Assume first the weight of $f$ is purely $k$. We observe and use throughout that the coefficients of each $E_{k,(a,b),N}$ live in the field $\Q\lp \zeta_N \rp$. Suppose a quasimodular form $f$ has integral coefficients. Then any element $\sigma \in \mathrm{Gal}\lp \Q\lp \zeta_N \rp / \Q \rp$ fixes the coefficients of $f$; thus, if $E_{k,(a,b),N}$ appears in the formula for $f$, then for each $x$ coprime to $N$, the form $\sigma_x E_{k,(a,b),N} = E_{k,(a,bx),N}$ appears in $f$ with the same coefficient. Thus, $f$ is built from the traceforms. Likewise, each traceform is fixed by each element of $\mathrm{Gal}\lp \Q\lp \zeta_N \rp/\Q \rp$ and thus the coefficients live in $\Q$, but since the coefficients are also evidently elements of $\Z[\zeta_N]$, the coefficients live in $\Q \cap \Z[\zeta_N] = \Z$. This completes the proof for pure weights, and the proof extends to mixed weights since the algebra of quasimodular forms is graded by weights.
\end{proof}

Because of Lemma \ref{L: Sturm idea}, we also have the following corollary.

\begin{corollary} \label{C: Integer weight 2 generation}
    The quasimodular forms of level $N$ with integer coefficients and weights $k \geq 2$ are generated as an algebra by the forms from Lemma \ref{L: Integral Eisenstein combinations} and $E_2$.
\end{corollary}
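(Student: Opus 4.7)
The plan is to combine three already-established ingredients in a straightforward way: the $\C$-algebraic generation provided by Corollary \ref{C: Construction 1}, the coefficient-field flexibility of Lemma \ref{L: Sturm idea}, and the integral identification of Lemma \ref{L: Integral Eisenstein combinations}. Given $f \in \cQM(\Gamma(N), \Z)$ of weight $k \geq 2$, I would first invoke Corollary \ref{C: Construction 1} together with Theorem \ref{T: T: Eisenstein Generation Weight 1} to express $f$ as a $\C$-polynomial in the weight-one Eisenstein series $E_{1,\lambda,N}$ and $E_2$; the potentially worrisome weight-one cusp form contributions arising from the $E_2$-depth decomposition of Proposition \ref{P: Polynomial in E_2} (when $k$ is odd) are controlled by Khuri-Makdisi's theorem, since products of weight-one Eisensteins already span every modular form of weight at least two.

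Next, I would apply Lemma \ref{L: Sturm idea} to pass from $\C$-coefficients to $\Q$-coefficients, obtaining a $\Q$-polynomial representation of $f$ in terms of the $E_{1,\lambda,N}$ and $E_2$. To promote this to an integer polynomial in traceforms, I would exploit that $f$, having integer coefficients, is fixed by the Galois group $G := \Gal(\Q(\zeta_N)/\Q)$, which permutes the $E_{1,\lambda,N}$ by twisting colors and fixes $E_2$ pointwise. Averaging the $\Q$-polynomial expression of $f$ over $G$ rewrites it entirely in terms of $G$-invariants, and by standard invariant theory these are generated as a $\Q$-algebra by the Galois orbit sums $E_{1,(a,b),N}^{\mathrm{int}}$ from Lemma \ref{L: Integral Eisenstein combinations} together with $E_2$. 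Lemma \ref{L: Integral Eisenstein combinations}, applied weight-by-weight to the graded pieces of the resulting expression, then upgrades the rational coefficients to integers.

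The main obstacle is the invariant-theoretic step: Galois averaging does not commute with multiplication, so the average of a monomial $E_{1,\lambda_1,N} \cdots E_{1,\lambda_m,N}$ is a diagonal $G$-sum rather than the product of traceforms $\prod_i E_{1,\lambda_i,N}^{\mathrm{int}}$. Showing that every $G$-invariant polynomial can nonetheless be rewritten as a $\Q$-polynomial in the traceforms and $E_2$---in analogy with expressing symmetric polynomials in terms of power sums---is the delicate heart of the argument, and is what justifies including $E_2$ explicitly in the generating set even though $E_2$ itself can additively be written in terms of weight-two traceforms via Lemma \ref{L: Integral Eisenstein combinations}.
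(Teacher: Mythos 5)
Your route is the same one the paper takes—Theorem \ref{T: T: Eisenstein Generation Weight 1} via Corollary \ref{C: Construction 1} for generation in weights $\geq 2$ by the $E_{1,\lambda,N}$ and $E_2$, Lemma \ref{L: Sturm idea} for independence of the coefficient field, and Lemma \ref{L: Integral Eisenstein combinations} to bring in the traceforms (the paper's entire proof is the one-line citation of Lemma \ref{L: Sturm idea})—but your write-up asserts the decisive step rather than proving it. You first claim that ``by standard invariant theory'' the $G$-invariants, $G = \Gal\lp \Q\lp\zeta_N\rp/\Q\rp$, are generated by the orbit sums $E^{\mathrm{int}}_{k,(a,b),N}$ together with $E_2$, and then concede in your final paragraph that this is precisely what is not standard. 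That concession is correct, and as written it is a genuine gap. What Galois averaging actually produces, after writing $f=\sum_i c_i M_i$ with $c_i\in\Q\lp\zeta_N\rp$ and $M_i$ monomials in the $E_{1,\lambda,N}$ and $E_2$, is a $\Q$-linear combination of diagonal sums $\sum_{\sigma\in G}\sigma\lp\zeta_N^j\rp\,\sigma\lp M_i\rp$ (using $\sigma_x E_{k,(a,b),N}=E_{k,(a,bx),N}$ and $\sigma E_2=E_2$): traces of monomials, not monomials in traceforms. The rewriting of the former in terms of the latter is not formal—in the model situation $K[x,y]$ with $\sigma$ swapping $x,y$ and conjugating coefficients, $xy$ is invariant but is no polynomial in the trace $x+y$—and note that for fixed $a$ the plain traceforms $E^{\mathrm{int}}_{k,(a,b),N}$ span only the small subspace of the Eisenstein space indexed by $\gcd(b,N)$, so the claim that their algebra (even allowing all weights $k$) exhausts the integral forms of weight $\geq 2$ genuinely needs an argument that neither you nor, for that matter, the paper's one-sentence proof supplies.

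Two further remarks that would let you close or sidestep the gap. First, Lemma \ref{L: Sturm idea} as stated passes from $\Q$ to larger fields, so ``apply it to go from $\C$ to $\Q$'' is not literally its content; what you need is the rational structure $\cM_k\lp\Gamma(N),\Q\rp\otimes_\Q K=\cM_k\lp\Gamma(N),K\rp$ underlying its references, together with the elementary fact that a form with rational coefficients lying in the $K$-span of forms with rational coefficients already lies in their $\Q$-span—but this applies only once the spanning monomials have been made rational, which is again exactly the step at issue. Second, the descent that is formal is the one recorded in the remark after Theorem \ref{T: Main Theorem, Integral}: integral forms are $\Q$-combinations of Galois traces of (twisted) monomials. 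To obtain the Corollary as literally stated you must either carry out a weight-by-weight spanning argument for products of traceforms supplemented by the higher-weight traceforms, or enlarge the generating set to the twisted traces $\sum_{\sigma\in G}\sigma\lp\zeta_N^j E_{k,\lambda,N}\rp$, which are rational, span the same $K$-space as the Galois orbit of $E_{k,\lambda,N}$, and make the descent immediate; with only the untwisted $E^{\mathrm{int}}_{k,(a,b),N}$ the conclusion remains unproven in your proposal.
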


\subsection{Quasi-shuffle algebras}

In order to work with $q$-multiple zeta values of any level, we need to clarify the important machinery of quasi-shuffle algebras (see \cite{Hoffman00,HoffmanIhara,HoffmanWeb,IKOO}). In order to define these, we must fix some notation. Suppose we have a set $A$, which we shall call the {\it set of letters}. Define on $A$ a commutative, associative product $\diamond$. We may extend this product bilinearly to form a commutative non-unital algebra $\lp \Q A, \diamond \rp$. We are concerned with $\Q$-linear combinations of concatenated letters (i.e. elements) of $A$, a space we denote by $\Q\langle A \rangle$. Monomials in $\Q\langle A \rangle$ are called {\it words}, and the concatenation of two words $w,x$ will be denoted $wx$.

\begin{definition}
    Let $A, \diamond$ be constructed as above. The {\it quasi-shuffle product} $*_\diamond$ on $A$ is defined as the unique $\Q$-bilinear product on $A$ satisfying $1 *_\diamond w = w *_\diamond 1 = w$, for any letters $a,b \in A$ and words $w,t \in \Q\langle A \rangle$, the recursive rule
    \begin{align*}
        aw *_\diamond bt = a\lp w *_\diamond bt \rp + b\lp aw *_\diamond t \rp + \lp a \diamond b \rp \lp w *_\diamond t \rp.
    \end{align*}
\end{definition}

The central example of quasi-shuffle products come from the theory of multiple zeta values as describe in the introduction. To see the connection between multiple zeta values and the quasi-shuffle product, we consider the case $A = \N$ and $a \diamond b = a+b$ for $a,b \in A$. In this light, the concatenation $ab$ of two natural numbers is really the tuple $\lp a,b \rp$, and likewise concatenation of a letter $a$ to any word $w$ is just the extension of that tuple $w$ by $a$ in the first position. Observe then that we can define the multiple zeta function $\zeta$ on monomials in $\Q\langle A \rangle$ in the natural way, provided that word contains a letter different from 1. Then the famous relation
\begin{align*}
    \zeta\lp k_1 \rp \zeta\lp k_2 \rp = \zeta\lp k_1, k_2 \rp + \zeta\lp k_2, k_1 \rp + \zeta\lp k_1 + k_2 \rp
\end{align*}
can be interpreted in terms of the quasi-shuffle products; more broadly, the fully general shuffle relations for multiple zeta values can be expressed by $\zeta\lp w * v \rp = \zeta(w) \zeta(v)$; proofs of this fact and of similar identities abound in the literature on multiple zeta values and quasi--shuffle algebras. Thus, the quasi-shuffle structure of $\mathcal{MZ}$ reduces product computations to sums.

In Sections \ref{S: q-MZV Level One} and \ref{S: q-MZV Level N}, we will see how this structure lifts to $q$-multiple zeta values of levels 1 and $N$, respectively. The particular focus will be, just as in the case of multiple zeta values, that products of $q$-multiple zeta values can be interpreted as linear combinations of $q$-multiple zeta values by means of a quasi-shuffle product.

There are many nice results for studying quasimodular $q$-multiple zeta values that arise from the literature on quasimodular forms. For instance, in \cite{BCIP} a certain quasi-shuffle exponential equation \cite[Equation 32]{HoffmanIhara} is used to generate infinite family of quasimodular forms from low-weight examples, and in \cite{CraigIttersumOno} the authors use a formula for the trace of a natural action of $S_n$, the $n$th symmetric group action, on quasi-shuffle algebras. Since we use a similar style of argument in the course of Theorem \ref{T: Main Theorem, Symmetry}, we cite this result, although we do not use it directly.

\begin{proposition}[{\cite[Theorem 13]{Hoffman20}}] \label{P: Quasishuffle symmetry}
    For $a_1, \dots, a_n \in A$, we have
    \begin{align*}
        \sum_{\sigma \in S_n} a_{\sigma(1)} \cdots a_{\sigma(n)} = \sum_{B \in \Pi_n} f(B) \prod_{\beta \in B} \diamond_{j \in \beta} a_j,
    \end{align*}
    where $\prod$ here denotes a product with respect to $*$, $\Pi_n$ denotes all set partitions of $\{ 1, 2, \dots, n \}$, $\diamond_{j \in B}$ denotes a $\diamond$-product over all elements of $B$, and
    \begin{align*}
        f(B) = (-1)^{n-|B|} \prod_{\beta \in B} \lp |\beta| - 1 \rp!.
    \end{align*}
\end{proposition}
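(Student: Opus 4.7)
The plan is to identify the coefficient of each reduced word on both sides after fully expanding the $*_\diamond$-products on the right-hand side, and then to match them via a single combinatorial identity. For any nonempty $C \subseteq \{1,\ldots,n\}$, write $Y_C := \diamond_{j \in C} a_j$, which is a single letter of $A$ by the commutativity and associativity of $\diamond$. The left-hand side is then $\sum_{\sigma \in S_n} Y_{\{\sigma(1)\}} Y_{\{\sigma(2)\}} \cdots Y_{\{\sigma(n)\}}$, that is, the sum over ordered set partitions of $\{1,\ldots,n\}$ whose blocks are all singletons.

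The main preliminary step is to establish the expansion
\begin{align*}
    b_1 * b_2 * \cdots * b_k = \sum_{(T_1,\ldots,T_r)} \bigl(\diamond_{i \in T_1} b_i\bigr) \bigl(\diamond_{i \in T_2} b_i\bigr) \cdots \bigl(\diamond_{i \in T_r} b_i\bigr)
\end{align*}
for any letters $b_1,\ldots,b_k \in A$, summed over ordered set partitions $(T_1,\ldots,T_r)$ of $\{1,\ldots,k\}$. This follows by induction on $k$ using the recursion $aw *_\diamond bt = a(w *_\diamond bt) + b(aw *_\diamond t) + (a \diamond b)(w *_\diamond t)$, the three terms corresponding to whether the first block of the ordered partition begins with the index of $a$, the index of $b$, or merges both. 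Applying this to $Y_{B_1} * \cdots * Y_{B_k}$ for a set partition $B = \{B_1,\ldots,B_k\} \in \Pi_n$, every resulting term has the form $Y_{C_1} \cdots Y_{C_m}$, where $(C_1,\ldots,C_m)$ is the ordered set partition of $\{1,\ldots,n\}$ given by $C_j = \bigcup_{B_i \in T_j} B_i$. Hence $B$ refines the underlying unordered partition $\{C_1,\ldots,C_m\}$, and conversely each such refinement is realized exactly once. The coefficient of $Y_{C_1} \cdots Y_{C_m}$ on the right-hand side of the proposition is therefore $\sum_B f(B)$, with $B$ ranging over set partitions of $\{1,\ldots,n\}$ refining $\{C_1,\ldots,C_m\}$.

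Refinements of $\{C_1,\ldots,C_m\}$ correspond to independent choices of a set partition of each $C_j$, and $f$ factors multiplicatively over blocks, so this coefficient equals $\prod_{j=1}^m h(|C_j|)$, where
\begin{align*}
    h(c) := \sum_{P \in \Pi_c} (-1)^{c-|P|} \prod_{\beta \in P} (|\beta|-1)!.
\end{align*}
Matching against the left-hand side, the proposition reduces to the assertion that $h(1)=1$ and $h(c)=0$ for all $c \geq 2$, which isolates exactly the ordered partitions into singletons. This follows from the exponential formula for set partitions: setting $\alpha_k = (-1)^{k-1}(k-1)!$, one computes
\begin{align*}
    \sum_{c \geq 0} \frac{h(c)}{c!}\, x^c = \exp\!\Bigl(\sum_{k \geq 1} \frac{\alpha_k}{k!}\, x^k\Bigr) = \exp\!\Bigl(\sum_{k \geq 1} \frac{(-1)^{k-1}}{k}\, x^k\Bigr) = \exp(\log(1+x)) = 1+x.
\end{align*}

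The main conceptual obstacle is the bijective bookkeeping of the second paragraph: one must verify that for each ordered partition $(C_1,\ldots,C_m)$ of $\{1,\ldots,n\}$ and each refinement $B$, the contribution $Y_{C_1}\cdots Y_{C_m}$ arises exactly once in the expansion of $\prod_* (\diamond_{j\in\beta} a_j)$ over $\beta \in B$. This is where the commutativity of $\diamond$ within each group $T_j$ and the strict ordering of the $T_j$ matching the order of the $C_j$ enter, and is the only step where the full quasi-shuffle axioms are needed beyond formal manipulation.
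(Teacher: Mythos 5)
Your proposal is correct, but there is no internal argument in the paper to measure it against: the paper does not prove this proposition at all, it imports it verbatim from Hoffman (Theorem 13 of the cited survey), so what you have written supplies a self-contained proof where the paper offers only a citation. Your route has two pillars, both sound: (i) the expansion of a $*$-product of depth-one elements over ordered set partitions, which turns the right-hand side into a sum over ordered set partitions $\lp C_1, \dots, C_m \rp$ weighted by $\sum_B f(B)$ over refinements $B$ of $\{C_1,\dots,C_m\}$; and (ii) the blockwise factorization of that weight into $\prod_j h(|C_j|)$ with $h(c) = \sum_{P \in \Pi_c} (-1)^{c-|P|} \prod_{\beta \in P} (|\beta|-1)!$, killed for $c \geq 2$ by the exponential formula $\exp(\log(1+x)) = 1+x$ (equivalently, Möbius-type cancellation on the partition lattice, whose values $(-1)^{c-1}(c-1)!$ are exactly your $\alpha_c$), leaving precisely the permutation words with coefficient $1$. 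Two points deserve tightening, neither a genuine gap. First, your expansion lemma does not follow from one application of the displayed two-word recursion: the induction on $k$ needs the auxiliary case of a single letter against an arbitrary word (the letter is either inserted into one of the gaps or $\diamond$-merged with one of the existing letters), and it is this case that makes ``each ordered set partition arises exactly once'' precise. Second, since $\diamond$ is a product on $\Q A$, the element $Y_C = \diamond_{j \in C}\, a_j$ is in general a linear combination of letters rather than a single letter, and distinct ordered set partitions can evaluate to the same element of $\Q\langle A \rangle$ when some $a_i$ coincide; the coefficient comparison should therefore be carried out in the free module on ordered set partitions and then transported into $\Q\langle A \rangle$ by the multilinear evaluation map $\lp C_1, \dots, C_m \rp \mapsto Y_{C_1} \cdots Y_{C_m}$ — which is what your computation actually establishes, so the repair is purely expository.
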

In particular, we note that in \cite{CraigIttersumOno} this formula is used to show that symmetric sums of $q$-multiple zeta values of level one produce quasimodular forms by showing that the coefficients arising from the $\diamond$-product filter out odd-index Eisenstein series.

\section{$q$-multiple zeta values} \label{S: q-MZV}

In this section, we describe central results in the theories of $q$-multiple divisor sums and $q$-multiple zeta values. We begin with the level one theory due to Bachmann and K\"{u}hn \cite{BachmannKuhn}, and we follow up with the generalization to level $N$ due to Yuan and Zhao \cite{YuanZhao}. In the process, we describe (in level one and level $N$) the relationship between $q$-multiple zeta values and $q$-multiple divisor sums, quasi-shuffle algebra structure of $q$-multiple divisor sums, and connections between $q$-multiple zeta functions and Eisenstein series of level $N$.

\subsection{Level one $q$-multiple zeta values} \label{S: q-MZV Level One}

In this section, we describe the central aspects of the $q$-multiple zeta value theory of Bachmann and K\"{u}hn \cite{BachmannKuhn}. We define for $a \in \Z_{\geq 0}$, $\vec{k} = \lp k_1, \dots, k_a \rp \in \Z_{\geq 0}^a$, $\vec{P} = \lp P_1, \dots, P_a \rp \in \lp \Q[x] \rp^a$ with $P_1 \in x \Q[x]$ and each $P_j$ of degree at most $k_j$, the functions
\begin{align*}
    \mathcal{U}_{\vec{k}; \vec{P}}(q) := \sum_{0 < n_1 < n_2 < \dots < n_a} \prod_{j=1}^a \dfrac{P_j(q^{n_j})}{\lp 1 - q^{n_j} \rp^{k_j}}.
\end{align*}
These are examples of $q$-analogs of multiple zeta values as defined by many authors, which is justified by the relation \cite[p. 6]{BachmannKuhn}
\begin{align*}
    \lim_{q \to 1} \lp 1 - q \rp^{k_1 + \dots + k_a} \mathcal{U}_{(k_1, \dots, k_a); (P_1, \dots, P_a)}(q) = \zeta\lp k_1, \dots, k_a \rp \prod_j P_j(1).
\end{align*}
See Brindle's overview \cite{Brindle} for a discussion of various different constructions of $q$-multiple zeta values. We denote by $\mathcal{MZ}_q$ the algebra generated by all $q$-multiple zeta values in the above sense. We note that under the interpretation as sums over partitions into exactly $a$ distinct part sizes, the constant functions belong to $\mathcal{MZ}_q$ from the case $a=0$.

We consider as a special case of these $q$-multiple zeta values
\begin{align*}
    \mathcal{U}_{\vec{k}}(q) := \sum_{0 < n_1 < n_2 < \dots < n_a} \dfrac{q^{n_1 + n_2 + \dots + n_a} Q_{k_1}(q^{n_1}) Q_{k_2}(q^{n_2}) \cdots Q_{k_a}(q^{n_a})}{\lp 1 - q^{n_1} \rp^{k_1+1} \lp 1 - q^{n_2} \rp^{k_2+1} \cdots \lp 1 - q^{n_a} \rp^{k_a+1}}.
\end{align*}
where $Q_k(x)$ are the {\it Eulerian polynomials} defined by the relation
\begin{align*}
    \sum_{n \geq 1} n^k x^n = \dfrac{x Q_k(x)}{\lp 1 - x \rp^{k+1}}.
\end{align*}

We cite here the central results of Bachmann and K\"{u}hn. The first result concerns the space $\mathcal{MD}_q$, which we define a priori as the $\Q$-vector space generated by all forms $\mathcal{U}_{\vec{k}}(q)$ for $\vec{k}$ a vector of any length of non-negative integers. The main results of \cite{BachmannKuhn} with which we are concerned are the following.

\begin{proposition}[{\cite[Lemma 2.5]{BachmannKuhn}}] \label{P: BachmannKuhn Series}
    For $\vec{k} = \lp k_1, \dots, k_a \rp \in \Z_{\geq 0}^a$, we have
    \begin{align*}
        \mathcal{U}_{\vec{k}}(q) = \sum_{n \geq 0} \sigma_{\vec{k}}(n) q^n = \sum_{\substack{m_1 n_1 + \dots + m_a n_a = n \\ 0 < n_1 < n_2 < \dots < n_a}} m_1^{k_1} m_2^{k_2} \cdots m_a^{k_a}.
    \end{align*}
\end{proposition}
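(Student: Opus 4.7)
The plan is a direct unpacking of the defining product using the Eulerian polynomial identity, followed by a formal-power-series rearrangement. The defining relation
\begin{align*}
    \sum_{m \geq 1} m^k x^m = \dfrac{x Q_k(x)}{(1-x)^{k+1}}
\end{align*}
applied with $x = q^{n_j}$ rewrites each of the factors appearing in the definition of $\mathcal{U}_{\vec{k}}(q)$ as
\begin{align*}
    \dfrac{q^{n_j} Q_{k_j}(q^{n_j})}{(1-q^{n_j})^{k_j+1}} = \sum_{m_j \geq 1} m_j^{k_j} q^{m_j n_j}.
\end{align*}
Substituting these expressions into the product in the definition, I would obtain
\begin{align*}
    \mathcal{U}_{\vec{k}}(q) = \sum_{0 < n_1 < n_2 < \dots < n_a} \; \prod_{j=1}^a \; \sum_{m_j \geq 1} m_j^{k_j} q^{m_j n_j} = \sum_{0 < n_1 < \dots < n_a} \; \sum_{m_1, \dots, m_a \geq 1} m_1^{k_1} \cdots m_a^{k_a} \, q^{m_1 n_1 + \dots + m_a n_a}.
\end{align*}

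Next I would collect by powers of $q$: for each $n \geq 0$, the coefficient of $q^n$ comes exactly from tuples $(m_1,\dots,m_a; n_1,\dots,n_a)$ with $m_j, n_j \geq 1$, $n_1 < \dots < n_a$, and $\sum_j m_j n_j = n$. This yields the claimed formula for $\sigma_{\vec{k}}(n)$. The only step requiring any justification is the rearrangement of the doubly-infinite sums, but this is automatic in $\Q[[q]]$ since the constraint $\sum_j m_j n_j = n$ bounds each $m_j, n_j$ by $n$, so only finitely many tuples contribute to each coefficient. Thus I expect no genuine obstacle; the proposition is essentially a restatement of the definition after applying the Eulerian polynomial identity termwise and sorting the resulting monomials by total $q$-degree.
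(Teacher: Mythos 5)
Your proof is correct and is essentially the standard argument: the paper simply cites this as Lemma 2.5 of Bachmann--K\"{u}hn, whose proof is exactly this termwise application of the Eulerian polynomial identity $\sum_{m \geq 1} m^k x^m = x Q_k(x)/(1-x)^{k+1}$ followed by collecting coefficients of $q^n$, which is harmless in $\Q[[q]]$ since only finitely many tuples contribute to each coefficient. Nothing further is needed.
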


\begin{proposition}[{\cite[Proposition 4.1 (i)]{BachmannKuhn}}] \label{P: Quasimodular level one surjection}
    All quasimodular forms of level one are represented by elements of the algebra $\mathcal{MD}_q$.
\end{proposition}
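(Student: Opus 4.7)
The plan is to combine the Eisenstein-series generation of level-one quasimodular forms with the quasi-shuffle algebra structure carried by the $q$-multiple divisor sums. By Lemma \ref{L: Level 1 generation}, $\cQM\lp \Gamma(1),\C \rp = \C[E_2, E_4, E_6]$, so in view of Lemma \ref{L: Sturm idea} it will suffice to establish two claims: (i) $\mathcal{MD}_q$ is closed under multiplication, i.e.\ it is a $\Q$-algebra; and (ii) the Eisenstein series $E_2, E_4, E_6$ belong to $\mathcal{MD}_q$. These two facts together will immediately yield the stated containment.

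Claim (ii) is the direct part. Applying Proposition \ref{P: BachmannKuhn Series} in depth $a=1$ gives $\mathcal{U}_{(k)}(q) = \sum_{n \geq 1} \sigma_k(n) q^n$, so that up to adding a constant and multiplying by a nonzero rational scalar, $E_{2k}$ coincides with $\mathcal{U}_{(2k-1)}(q)$. Since the constant function $1$ already lies in $\mathcal{MD}_q$ as the empty-word case $a=0$, we conclude that each $E_{2k}$, and in particular $E_2$, $E_4$, and $E_6$, belongs to $\mathcal{MD}_q$.

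Claim (i) is the heart of the matter and is where I expect the main effort. The approach is to expand a product $\mathcal{U}_{\vec{k}}(q) \cdot \mathcal{U}_{\vec{\ell}}(q)$ in its defining series form and then stratify the resulting double sum over pairs $(\vec{n}, \vec{n}')$ of strictly increasing tuples according to the interleaving pattern of their entries. Strata in which all entries of $\vec{n}$ and $\vec{n}'$ are distinct reassemble directly into forms $\mathcal{U}_{\vec{m}}(q)$ indexed by the ordinary shuffles of $\vec{k}$ with $\vec{\ell}$. On a ``diagonal'' stratum where some $n_j = n_i'$, one must combine the two local factors $q^{n_j} Q_{k_j}(q^{n_j})/(1-q^{n_j})^{k_j+1}$ and $q^{n_j} Q_{\ell_i}(q^{n_j})/(1-q^{n_j})^{\ell_i+1}$ into a single $\Q$-linear combination of factors of the same shape $q^n Q_r(q^n)/(1-q^n)^{r+1}$ with $r \leq k_j + \ell_i$. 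Performing this local manipulation defines a commutative, associative $\diamond$-product on the alphabet $\Z_{\geq 0}$, and the full product identity then takes the form $\mathcal{U}_{\vec{k}}(q) \cdot \mathcal{U}_{\vec{\ell}}(q) = \mathcal{U}_{\vec{k} *_\diamond \vec{\ell}}(q)$ by induction on the total depth $a+b$, using the recursive defining rule for the quasi-shuffle product $*_\diamond$; in particular $\mathcal{MD}_q$ is an algebra.

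The main obstacle is thus the diagonal computation: realizing the product of two local factors at a coincident index as a linear combination of single local factors with bounded total weight. This will reduce to a finite manipulation of the generating identity $\sum_{m \geq 1} m^k x^m = x Q_k(x)/(1-x)^{k+1}$ for the Eulerian polynomials, after which commutativity and associativity of the resulting $*_\diamond$ follow from the general theory sketched in the preceding section. Once $\mathcal{MD}_q$ is known to be a $\Q$-algebra, claim (ii) together with Lemma \ref{L: Level 1 generation} finishes the proof.
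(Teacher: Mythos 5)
The paper does not actually prove this proposition---it is imported verbatim from Bachmann--K\"{u}hn \cite{BachmannKuhn}---and your proposal is correct and reconstructs essentially that same argument: the depth-one sums $\mathcal{U}_{(2k-1)}(q)$ realize $E_2, E_4, E_6$ up to constants and rational scalars, the quasi-shuffle (stuffle) expansion shows $\mathcal{MD}_q$ is a $\Q$-algebra, and Lemma \ref{L: Level 1 generation} finishes the job. One small correction to your diagonal computation: the product of two local factors at a coincident index produces terms with $r$ up to $k_j+\ell_i+1$, not $r \leq k_j+\ell_i$, as seen in the $\mathcal{U}_{(s+t+1)}$ term of \eqref{Eq: Product formula level one}; this off-by-one does not affect the argument, since all that is needed is that the diagonal product is a finite $\Q$-linear combination of single local factors.
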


\begin{theorem}[{\cite[Theorems 1.3 and 1.7]{BachmannKuhn}}] \label{T: BachmannKuhn Closure}
    $\mathcal{MD}_q$ is a differential subalgebra of $\mathcal{MZ}_q$ with the differential operator $D$. That is, $\mathcal{MD}_q$ is closed under multiplication and under action by $D$.
\end{theorem}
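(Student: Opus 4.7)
The plan is to prove multiplicative closure and differential closure separately, both via direct manipulations of the generating series. Writing
\[
    \mathcal{U}_{\vec{k}}(q) = \sum_{0 < n_1 < \cdots < n_a} \prod_{i=1}^{a} F_{k_i}(q^{n_i}), \qquad F_k(x) := \frac{x Q_k(x)}{(1-x)^{k+1}} = \sum_{m \geq 1} m^k x^m,
\]
puts each $\mathcal{U}_{\vec{k}}$ in the natural form of a sum over strictly increasing tuples of local factors. For multiplicative closure, I would expand $\mathcal{U}_{\vec{k}}(q)\,\mathcal{U}_{\vec{\ell}}(q)$ as a double sum over strictly increasing tuples $(n_i)$ and $(m_j)$ and partition it by the combinatorial type of the interleaving. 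Whenever all $n_i,m_j$ are distinct, the summand collapses into a term $\mathcal{U}_{\vec{r}}$ for a merged index vector $\vec{r}$---the ``shuffle'' part. Whenever $n_i = m_j$ for some pair, the local factor at that shared variable is the product $F_{k_i}(x)F_{\ell_j}(x)$ at $x=q^{n_i}$, and the crucial input is a partial-fraction identity decomposing this as $F_k(x)F_\ell(x)=\sum_{r=0}^{k+\ell+1}\lambda^{k,\ell}_r F_r(x)$ for explicit rationals $\lambda^{k,\ell}_r$, obtained by matching pole orders at $x=1$. This identity defines a commutative, associative diamond product $k \diamond \ell := \sum_r \lambda^{k,\ell}_r\, r$ on the alphabet $\Z_{\geq 0}$, and the full expansion becomes exactly the quasi-shuffle product $\vec{k} *_\diamond \vec{\ell}$ of Section \ref{S: Nuts and Bolts}, giving $\mathcal{U}_{\vec{k}}\mathcal{U}_{\vec{\ell}} = \mathcal{U}_{\vec{k} *_\diamond \vec{\ell}} \in \mathcal{MD}_q$.

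For differential closure, applying $D$ termwise to Proposition \ref{P: BachmannKuhn Series} yields
\[
    D\mathcal{U}_{\vec{k}}(q) = \sum_{0 < n_1 < \cdots < n_a}\sum_{m_1,\ldots,m_a \geq 1}(m_1 n_1 + \cdots + m_a n_a)\, m_1^{k_1}\cdots m_a^{k_a}\, q^{m_1 n_1 + \cdots + m_a n_a}.
\]
The factors $m_i$ absorb painlessly into the weight shift $k_i \mapsto k_i + 1$, but the accompanying factor $n_i$ is not of divisor-sum type. I would handle it via the identity $q\partial_q F_k(q^n) = n F_{k+1}(q^n)$, which lets me rewrite $n_i F_{k_i+1}(q^{n_i})$ as $q\partial_q F_{k_i}(q^{n_i})$, and then redistribute this $q$-derivative over the surrounding product $\prod_{j \neq i} F_{k_j}(q^{n_j})$ using the Leibniz rule together with the partial-fraction machinery developed above. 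After rearrangement, $D\mathcal{U}_{\vec{k}}$ becomes a finite $\Q$-linear combination of $\mathcal{U}_{\vec{k}'}$ with total weight increased by $2$ and depth $a$ or $a+1$, so $D\mathcal{U}_{\vec{k}} \in \mathcal{MD}_q$.

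The main obstacle is the derivative step: extracting and reabsorbing the bare factor $n_i$ from inside the constrained sum requires essentially every tool used for the quasi-shuffle structure plus a careful check that no terms outside the $F_r$-basis remain after applying Leibniz. Once the partial-fraction identity $F_k F_\ell = \sum_r \lambda^{k,\ell}_r F_r$ is in hand, however, both closure claims reduce to finite combinatorial bookkeeping, and the quasi-shuffle structure on $\mathcal{MD}_q$ (which Bachmann and K\"uhn later exploit to describe the entire algebra) falls out transparently from the same calculation.
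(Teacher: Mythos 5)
A preliminary remark: the paper does not prove this theorem itself; it is quoted from Bachmann--K\"{u}hn, and the surrounding discussion only records that the proof rests on the quasi-shuffle structure of $\mathcal{MD}_q$. Measured against that cited argument, your multiplicative half is correct and is essentially the same proof: splitting the double sum over two strictly increasing tuples according to the interleaving pattern, and linearizing coincident local factors via $F_k(x)F_\ell(x)=\sum_{r=0}^{k+\ell+1}\lambda^{k,\ell}_r F_r(x)$ (whose coefficients are exactly the Bernoulli-number terms visible in \eqref{Eq: Product formula level one}), is the quasi-shuffle mechanism recorded in Theorem \ref{T: Quasi-shuffle and q-MZV}, specialized to $N=1$.

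The differential half, however, has a genuine gap. The identity $n_i F_{k_i+1}(q^{n_i}) = q\partial_q F_{k_i}(q^{n_i})$ is true, but the proposed Leibniz redistribution cannot remove the offending factor: summing those terms over $i$ simply reassembles $q\partial_q$ of the whole product, i.e.\ it returns $D\mathcal{U}_{\vec{k}}$ and gives nothing new, while for a fixed $i$ the Leibniz rule only trades the derivative on the $i$-th factor for derivatives on the remaining factors, each of which reintroduces an equally problematic part-size factor $n_j$. The partial-fraction identity is of no help here either, since it acts on the multiplicity variable (it rewrites local factors of the form $\sum_{m\ge1} P(m)q^{mn}$) and cannot absorb a bare factor of the part size $n_i$. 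The depth-one case already exposes the problem: for $a=1$ there is no surrounding product to redistribute onto, yet $D\mathcal{U}_{(1)}=\sum_{m,n\ge 1} mn\,q^{mn}$ is genuinely nontrivial to place in $\mathcal{MD}_q$ --- its known expression, equivalent to Ramanujan's formula for $DE_2$, requires the depth-two bracket $\mathcal{U}_{(1,1)}$. What is missing is a device that converts part-size factors into multiplicity data (in Bachmann--K\"{u}hn this comes from the partition/conjugation symmetry of the generating series, leading to an explicit derivative formula; the paper itself describes the level-$N$ analogue in Yuan--Zhao as ``rather involved''). With only Leibniz plus partial fractions, your sketch asserts the conclusion of this step rather than proving it, so closure under $D$ --- the hard half of the theorem --- remains unestablished.
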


We note that Proposition \ref{P: BachmannKuhn Series} gives the coefficients of $\mathcal{U}_{\vec{k}}(q)$ as the case $N=1$ of twisted multiple divisor sums as in \eqref{Eq: Twisted multiple divisor sum}, that is, the work of Bachmann and K\"{u}hn is a genuine level 1 case of the broader theory of Yuan and Zhao which we discuss next. We also note that Proposition \ref{P: BachmannKuhn Series} represents $q$-multiple divisor sums as sums over partitions into $a$ distinct part sizes; thus, we see the interface between $q$-multiple zeta values and partitions. The proof of Theorem \ref{T: BachmannKuhn Closure} uses crucially the quasi-shuffle algebra structure of $\mathcal{MD}_q$ as well as the fact that each Eisenstein series $E_{2k}$ is a clear example of a depth one $q$-multiple divisor sum of weight $2k$. An explicit formula for products in level one is given, for example, in \cite[Proposition 4.3]{CraigIttersumOno}, but will also be discussed in a later section of this paper.

\subsection{Level $N$ $q$-multiple zeta values} \label{S: q-MZV Level N}

In this section, we describe the central aspects of the $q$-multiple zeta value theory of Yuan and Zhao \cite{YuanZhao}, which generalizes the theory of Section \ref{S: q-MZV Level One}. For $a \in \Z_{\geq 0}$, $N \geq 1$, $\vec{k} = \lp k_1, \dots, k_a \rp \in \Z_{\geq 0}^a$, and $\vec{c} = \lp c_1, \dots, c_a \rp \in \lp \Z/N\Z \rp^a$, we define (as in the introduction) using the twisted multiple divisor sums \eqref{Eq: Twisted multiple divisor sum} the series\footnote{When the level is clear from context, the $N$ is omitted from the notation for convenience. In the notation of Yuan and Zhao, $[\vec{k} + \vec{1},\vec{c}] = \lp \prod_j k_j! \rp^{-1} \mathcal{U}_{\vec{k},\vec{c}}(q)$, where $\vec{1} = \lp 1, 1, \dots, 1 \rp$. Both notations are useful from different points of view.}
\begin{align*}
    \mathcal{U}_{\vec{k};\vec{c};N}(q) = \mathcal{U}_{\vec{k};\vec{c}}(q) := \sum_{n > 0} \sigma_{\vec{k};\vec{c}}(n) q^n.
\end{align*}
We suppress $N$ from the notation where there is no risk of confusion. Let $\mathcal{MD}_q(N)$ denote the $\Q\lp \zeta_N \rp$-vector space generated by all possible forms $\mathcal{U}_{\vec{k},\vec{c}}(q)$. This is a subspace of the space $\mathcal{MZ}_q(N)$ of all $q$-multiple zeta values of level $N$, which are series of the form
\begin{align*}
    \sum_{0 < n_1 < n_2 < \dots < n_a} \prod_{j=1}^a \dfrac{P_j\lp \zeta_N^{c_j} q^{n_j} \rp}{\lp 1 - \zeta_N^{c_j} q^{n_j} \rp^{k_j+1}} \in \Q\lp \zeta_N \rp[[q]],
\end{align*}
where $P_j(x) \in x\Q[x]$ are polynomials. For such forms, we define the {\it weight} to be $|\vec{k}| + a$ and the {\it depth} to be $a$. The first main stage of generalizing the work of Bachmann and K\"{u}hn rests in generalizing Proposition \ref{P: BachmannKuhn Series}. This is accomplished in the following result.

\begin{proposition}[{\cite[Lemma 6.10]{YuanZhao}}]
    We have for $a,N \geq 1$, $\vec{k} \in \N^a$, $\vec{c} \in \lp \Z/N\Z \rp^a$ the $q$-series representation
    \begin{align*}
        \mathcal{U}_{\vec{k};\vec{c}}(q) = \sum_{0 < n_1 < n_2 < \dots < n_a} \dfrac{\zeta_N^{c_1 + c_2 + \dots + c_a} q^{n_1 + n_2 + \dots + n_a} Q_{k_1}\lp \zeta_N^{c_1} q^{n_1} \rp Q_{k_2}\lp \zeta_N^{c_2} q^{n_2} \rp \cdots Q_{k_a}\lp \zeta_N^{c_a} q^{n_a} \rp}{\lp 1 - \zeta_N^{c_1} q^{n_1} \rp^{k_1+1} \lp 1 - \zeta_N^{c_2} q^{n_2} \rp^{k_2 + 1} \cdots \lp 1 - \zeta_N^{c_a} q^{n_a} \rp^{k_a + 1}}
    \end{align*}
    where $Q_k(x)$ are the Eulerian polynomials.
\end{proposition}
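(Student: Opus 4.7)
The plan is to unpack the definition of $\mathcal{U}_{\vec{k};\vec{c}}(q)$ as the generating series of the twisted multiple divisor sums \eqref{Eq: Twisted multiple divisor sum} and reorganize the resulting multiple sum into the claimed Lambert-type form by invoking the defining generating-function identity of the Eulerian polynomials.

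First I would write out
$$\mathcal{U}_{\vec{k};\vec{c}}(q) = \sum_{n \geq 1} \sum_{\substack{m_1 n_1 + \dots + m_a n_a = n \\ 0 < n_1 < \dots < n_a}} \zeta_N^{c_1 m_1 + \dots + c_a m_a} m_1^{k_1} \cdots m_a^{k_a} \, q^{m_1 n_1 + \dots + m_a n_a},$$
where the constraint on $n$ merely aggregates terms by total degree. Interchanging the order of summation reorganizes this as a sum over strictly increasing tuples $(n_1, \dots, n_a)$ of products of independent inner sums over each $m_j$:
$$\mathcal{U}_{\vec{k};\vec{c}}(q) = \sum_{0 < n_1 < \dots < n_a} \prod_{j=1}^{a} \biggl( \sum_{m_j \geq 1} m_j^{k_j} \, \zeta_N^{c_j m_j} q^{m_j n_j} \biggr).$$

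Next, to each inner sum I would apply the generating-function identity $\sum_{m \geq 1} m^k x^m = x Q_k(x)/(1-x)^{k+1}$ that defines the Eulerian polynomials, specialized to the single variable $x = \zeta_N^{c_j} q^{n_j}$. This produces
$$\sum_{m_j \geq 1} m_j^{k_j} \zeta_N^{c_j m_j} q^{m_j n_j} = \dfrac{\zeta_N^{c_j} q^{n_j}\, Q_{k_j}\lp \zeta_N^{c_j} q^{n_j} \rp}{\lp 1 - \zeta_N^{c_j} q^{n_j} \rp^{k_j + 1}}.$$
Substituting into the product and pulling the factors $\zeta_N^{c_j} q^{n_j}$ outside (using $\prod_j \zeta_N^{c_j} = \zeta_N^{c_1 + \dots + c_a}$ and $\prod_j q^{n_j} = q^{n_1 + \dots + n_a}$) yields exactly the right-hand side of the claimed identity.

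The only point needing care, and the closest thing to an obstacle, is justifying the interchange of summation. This is routine at the level of formal power series in $\Q(\zeta_N)[[q]]$: for any fixed $n$, the inequalities $n_1 \geq 1$ and $m_j \geq 1$ force $m_1 n_1 + \dots + m_a n_a \geq n_a \geq a$, and only finitely many tuples $(n_1, \dots, n_a, m_1, \dots, m_a)$ satisfy $m_1 n_1 + \dots + m_a n_a = n$, so the coefficient of $q^n$ on either side is a finite sum. The identity therefore holds coefficient by coefficient, and no analytic convergence considerations are required.
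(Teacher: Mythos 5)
Your argument is correct and complete: unpacking the definition of $\sigma_{\vec{k},\vec{c}}(n)$, interchanging the (formally valid, coefficientwise finite) summations, and applying the Eulerian generating identity $\sum_{m\geq 1} m^k x^m = xQ_k(x)/(1-x)^{k+1}$ at $x = \zeta_N^{c_j}q^{n_j}$ is exactly the standard route to this formula. The paper itself cites the result from Yuan--Zhao without reproducing a proof, and your computation matches the expected argument (the level-$N$ twist of Bachmann--K\"uhn's Lemma 2.5), so there is nothing to correct.
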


Observe that $\mathcal{U}_{\vec{k};\vec{c}}(q)$ is thus a natural $\zeta_N$-twist of $\mathcal{U}_{\vec{k}}(q)$. To connect $q$-multiple divisor sums at level $N$ with partitions, it suffices to consider the original definition of the twisted multiple divisor sum
\begin{align*}
    \sigma_{\vec{k},\vec{c}}(n) := \sum_{\substack{m_1 n_1 + \dots + m_a n_a = n \\ 0 < n_1 < n_2 < \dots < n_a}} \zeta_N^{c_1 m_1 + c_2 m_2 + \dots + c_a m_a} m_1^{k_1} m_2^{k_2} \cdots m_a^{k_a},
\end{align*}
which is by construction a sum over partitions into $a$ distinct part sizes, and is again a natural $\zeta_N$-twist of the multiple divisor sums considered by Bachmann and K\"{u}hn.

We also consider now the level $N$ generalization of Theorem \ref{T: BachmannKuhn Closure}.

\begin{theorem} \label{T: YuanZhao Closure}
    For any $N \geq 1$, we have that $\mathcal{MD}_q(N)$ is a differential subalgebra of $\mathcal{MZ}_q(N)$.
\end{theorem}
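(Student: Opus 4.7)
The plan is to mimic the level-one strategy of Bachmann and K\"{u}hn (Theorem \ref{T: BachmannKuhn Closure}), adapting each ingredient to the twisted setting in which the letters carry both a weight and a color modulo $N$. First, I would equip the generators with a quasi-shuffle algebra structure. Introduce the set of letters $A = \Z_{\geq 0} \times (\Z/N\Z)$, where the letter $(k,c)$ encodes the depth-one building block $g_{k,c}(x) := \sum_{m \geq 1} m^{k} \zeta_N^{cm} x^m$, and identify the word $((k_1,c_1),\dots,(k_a,c_a))$ with $\mathcal{U}_{\vec k;\vec c;N}(q) = \sum_{0 < n_1 < \dots < n_a} \prod_i g_{k_i,c_i}(q^{n_i})$. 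Define the commutative product $\diamond$ on letters by the formal identity $g_{k_1,c_1}(x) \cdot g_{k_2,c_2}(x) = \sum_{(k,c)} \lambda^{(k,c)}_{(k_1,c_1),(k_2,c_2)}\, g_{k,c}(x)$; this is well-defined because the Cauchy convolution $\sum_{m_1+m_2=M} \zeta_N^{c_1 m_1 + c_2 m_2} m_1^{k_1} m_2^{k_2}$ can be written as a finite $\Q(\zeta_N)$-linear combination of $M^j \zeta_N^{cM}$ via standard Faulhaber-type evaluations of partial sums $\sum_{m \le M-1} \omega^m m^j$ for $\omega$ a root of unity.

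Closure under multiplication is then the standard quasi-shuffle expansion: splitting the product of two generators into cases according to how the ordered tuples $(n_i)$ and $(n_j')$ interleave, the ``collision'' terms $n_i = n_j'$ produce a $\diamond$-product of the letters $(k_i,c_i)$ and $(k_j',c_j')$, while the remaining terms give honest shuffles of depth $a+b$. Each resulting summand is once again an element of $\mathcal{MD}_q(N)$. For closure under $D$, compute directly
\begin{align*}
D\,\mathcal{U}_{\vec k;\vec c}(q) = \sum_{0 < n_1 < \dots < n_a} \sum_{i=1}^a n_i\, q^{n_i} g_{k_i,c_i}'(q^{n_i}) \prod_{j \ne i} g_{k_j,c_j}(q^{n_j}).
\end{align*}
Using $y g_{k,c}'(y) = g_{k+1,c}(y)$, each inner term is a factor of $n_i$ times a word obtained by incrementing $k_i$. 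To absorb the unwanted $n_i$, I would adapt the Bachmann--K\"uhn bi-bracket formalism to the twisted setting: define colored bi-brackets $[\vec k; \vec l; \vec c]$ carrying auxiliary exponents $\vec l$ on the $n_i$, prove via binomial expansions that they span the same space as the $\mathcal{U}_{\vec k;\vec c}$, and check that $D$ shifts weights and $l$-exponents in a manner closed inside this common space.

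The main technical obstacle is the $D$-closure: the naive differentiation introduces a factor $n_i$ that does not fit into a single letter. Upgrading to bi-brackets resolves this conceptually, but it requires carefully checking that (i) the color-sum formulas appearing in the $\diamond$-product produce exactly the colors $c_i + c_j \bmod N$ and no ``ghost'' colors, and (ii) the twisted bi-brackets really do reduce back to $\Q(\zeta_N)$-linear combinations of the $\mathcal{U}_{\vec k;\vec c;N}$, so that the level-$N$ span and the level-one one are preserved under the bookkeeping. Once these verifications are in place, the argument parallels the level-one case and gives both multiplicative and differential closure simultaneously.
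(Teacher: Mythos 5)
First, a framing remark: the paper does not prove this statement itself; it is imported from Yuan and Zhao, with the multiplicative half explained by the quasi-shuffle machinery of Section \ref{S: q-MZV} (the coefficients $\omega^N_{n;c}$, $\lambda^{j;N}_{a,b;c}$ coming from the expansion of $1/(\zeta_N^c e^x-1)$ and the algebra homomorphism $[\cdot]$ of Theorem \ref{T: Quasi-shuffle and q-MZV}), and the differential half cited to \cite[Theorem 7.8]{YuanZhao}. Your treatment of multiplicative closure is essentially this same route: letters are twisted polylogarithms $g_{k,c}(x)=\mathrm{Li}_{-k}(\zeta_N^c x)$, collisions $n_i=n_j'$ in the interleaving produce a $\diamond$-product, and the $\diamond$-product is computed by reducing the Cauchy convolution via partial sums $\sum_{m\le M-1}\omega^m m^s$ with $\omega$ a root of unity; this is equivalent to Yuan--Zhao's generating-function definition of the $\omega^N_{n;c}$, so that half is sound in outline. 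One correction: your anticipated verification (i) is misstated. The $\diamond$-product of letters colored $c_i$ and $c_j$ produces letters colored $c_i$ and $c_j$ (merging to a single color only when $c_i=c_j$), with coefficients depending on $c_i-c_j$; no letter of color $c_i+c_j$ arises, as your own Faulhaber-type computation would show (the factor $\zeta_N^{c_j M}$ times $P(M)\zeta_N^{(c_i-c_j)M}+C$ yields only $\zeta_N^{c_i M}$ and $\zeta_N^{c_j M}$ terms).

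The genuine gap is in the $D$-closure. You correctly isolate the obstruction (the stray factor $n_i$, which is a part size rather than a multiplicity and so cannot be absorbed into a single letter), but your resolution is only a plan: you posit colored bi-brackets carrying auxiliary exponents on the $n_i$ and assert that they ``span the same space as the $\mathcal{U}_{\vec{k};\vec{c}}$,'' to be ``proved via binomial expansions.'' That spanning statement is not a routine bookkeeping check; already at level one the relation between bi-brackets and ordinary brackets is a substantive theorem (and Bachmann--K\"uhn's own proof of $D$-closure does not proceed this way, but rather through an explicit derivative formula), and no argument is offered for the colored analogue at level $N$. Since the entire differential half of the theorem rests on this unproved reduction, the proposal as written establishes only that $\mathcal{MD}_q(N)$ is a subalgebra, not that it is a differential subalgebra. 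To close the gap you would either need to actually carry out the colored bi-bracket reduction, or derive an explicit formula for $D\,\mathcal{U}_{\vec{k};\vec{c}}$ as a $\Q\lp\zeta_N\rp$-linear combination of elements of $\mathcal{MD}_q(N)$, which is precisely the content of \cite[Theorem 7.8]{YuanZhao} that the paper invokes.
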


These results summarize the main ideas of \cite{YuanZhao} generalizing those in \cite{BachmannKuhn}. The main theorem of this paper completes the work of \cite{YuanZhao} from the number theorists' point of view by proving Theorems \ref{T: Main Theorem, Cyclotomic} and \ref{T: Main Theorem, Integral}, which are analogs of Proposition \ref{P: Quasimodular level one surjection} for level $N$. See also \cite{Kanno} for discussions of multiple Eisenstein series and $q$-multiple zeta values at level $N$. We will discuss the quasi-shuffle structure after an aside on $q$-multiple zeta values with integer coefficients.

\subsection{$q$-multiple zeta values with integral coefficients}

We now discuss certain $q$-multiple zeta values of level $N$ living in $\Z[[q]]$ rather than $\Q\lp \zeta_N \rp[[q]]$. We note that forms in level one already reside in $\Z[[q]]$, and so we focus only on $N>1$. We firstly note the trivial case of $\mathcal{U}_{\vec{k};\vec{0}}(q)$, which reduces back to $q$-multiple zeta values of level one. We thus restrict our focus to cases which do not reduce to level one.

We construct these forms along the lines proposed in part by MacMahon in \cite{MacMahon} and more fully by Rose \cite{Rose} and Larson \cite{Larson}. For construction, we observe the utility of the orthogonality of $N$th roots of unity. In particular, it is plain to see that we have for each $0 \leq d < N$ that the sums
\begin{align*}
    \sum_{c=0}^{N-1} \zeta_N^{-cd} \sigma_{\vec{k},(c_1,\dots,c,\dots,c_a)}(n)
\end{align*}
modify the multiple divisor sums by imposing the restriction $m_j \equiv d \pmod{N}$. Observe that such modifications occur purely in $\mathcal{MD}_q(N)$ without twists on $q$. Thus, by taking these $\Q\lp\zeta_N\rp$-linear combinations of certain forms $\mathcal{U}_{\vec{k};\vec{c}}(q)$ we can impose any desired congruence conditions on the multiplicities $m_j$. Now, recalling $m_j$ is the multiplicity of $n_j$ in the $q$-series $\mathcal{U}_{\vec{k};\vec{c}}(q)$, the restriction $m_j \equiv d \pmod{N}$ imposes the change of variables $n_j \mapsto N n_j + d$, i.e., we may represent these $q$-series using a sum $n_j \equiv d \pmod{N}$. Thus, if $S_j$ denote any subsets of $\{ 0, 1, \dots, N-1 \}$ and if $\vec{S} = \lp S_1, \dots, S_a \rp$ denotes the collection of these residue classes, we have that\footnote{We abuse notation here, but the difference is clear between a vector of subsets of integers and a vector of polynomials.}
\begin{align} \label{Eq: Integral forms}
    \mathcal{U}_{\vec{k};\vec{S}}(q) = \sum_{\substack{0 < n_1 < n_2 < \dots < n_a \\ n_j \in S_j \bmod{N}}} \dfrac{q^{n_1 + \dots + n_a} Q_{k_1}\lp q^{n_1} \rp \cdots Q_{k_a}\lp q^{n_a} \rp}{\lp 1 - q^{n_1} \rp^{k_1+1} \cdots \lp 1 - q^{n_a} \rp^{k_a+1}} =: \sum_{n>0} \sigma_{\vec{k},\vec{S}}(n) q^n \in \mathcal{MD}_q(N),
\end{align}
and we see that
\begin{align} \label{Eq: Divisors in progressions}
    \sigma_{\vec{k},\vec{S}}(n) = \sum_{\substack{m_1 n_1 + \dots + m_a n_a = n \\ 0 < n_1 < n_2 < \dots < n_a \\ m_j \in S_j \bmod{N}}} m_1^{k_1} \cdots m_a^{k_a}.
\end{align}
By following the same reasoning, we have the following result.

\begin{proposition}
    For $N \geq 1$ and $S_j \subset \{ 0, 1, \dots, N-1 \}$, and for any $a \geq 1$, $\vec{k} = \lp k_1, \dots, k_a \rp \in \Z_{\geq 0}^a$ and $P_1(x), \dots, P_a(x) \in \Q[x]^a$, we have
    \begin{align*}
        \sum_{\substack{0 < n_1 < n_2 < \dots < n_a \\ n_j \in S_j \bmod{N}}} \prod_{j=1} \dfrac{q^{n_1 + \dots + n_a} P_{k_1}(q^{n_1}) \cdots P_{k_a}(q^{n_a})}{\lp 1 - q^{n_1} \rp^{k_1+1} \cdots \lp 1 - q^{n_a} \rp^{k_a+1}} \in \mathcal{MZ}_q(N) \cap \Z[[q]].
    \end{align*}
\end{proposition}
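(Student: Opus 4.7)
The plan is to extend the short argument in the paragraph just preceding the proposition, which established $\mathcal{U}_{\vec k;\vec S}(q) \in \mathcal{MD}_q(N)$ for Eulerian $Q_{k_j}$, to the case of arbitrary polynomial numerators $P_j$, and to verify the integrality claim separately. First, by linearity in the residue sets $S_j$, it suffices to treat $S_j = \{d_j\}$ a single residue class; the indicator of the congruence $n_j \equiv d_j \pmod{N}$ can then be expanded via orthogonality of $N$-th roots of unity,
\begin{align*}
    \mathbf{1}[n_j \equiv d_j \pmod{N}] = \frac{1}{N} \sum_{c_j = 0}^{N-1} \zeta_N^{c_j(n_j - d_j)},
\end{align*}
rewriting the restricted sum as a $\Q(\zeta_N)$-linear combination of unrestricted multisums in which each summand is weighted by $\prod_j \zeta_N^{c_j n_j}$.

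To recognize each such weighted multisum as an element of $\mathcal{MZ}_q(N)$, I would expand $q^{n_j} P_j(q^{n_j})/(1 - q^{n_j})^{k_j+1} = \sum_{m_j \geq 1} a_{j, m_j} q^{m_j n_j}$ as a power series in $q^{n_j}$, interchange the summations over $n_j$ and $m_j$, and re-sum the inner geometric series via $\sum_{n \geq 1} \zeta_N^{cn} q^{mn} = \zeta_N^c q^m/(1 - \zeta_N^c q^m)$. After this rearrangement, the character weight $\zeta_N^{c_j n_j}$ (originally a twist on the part size) reassembles into a twist inside the denominator in the form defining $\mathcal{MZ}_q(N)$, via the symmetry of the divisor pair $(m_j, n_j) \leftrightarrow (n_j, m_j)$, as already visible in the toy identity $\sum_{n \geq 1} (-1)^n q^n/(1 - q^n) = \mathcal{U}_{(0);(1);2}(q)$. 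The algebra closure of $\mathcal{MZ}_q(N)$ from Theorem \ref{T: YuanZhao Closure} then places the entire sum in $\mathcal{MZ}_q(N)$.

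For the integrality claim, valid when $P_j \in \Z[x]$, I would expand directly in the original representation. Each factor $q^{n_j} P_j(q^{n_j})/(1 - q^{n_j})^{k_j+1}$ is an integer power series in $q^{n_j}$ via $(1 - x)^{-(k_j+1)} = \sum_{m \geq 0} \binom{m + k_j}{k_j} x^m$; multiplying these $a$ factors and summing over $0 < n_1 < \cdots < n_a$ subject to the prescribed residue conditions yields a series whose coefficient of $q^n$ is a finite $\Z$-linear combination indexed by partitions of $n$ with exactly $a$ distinct part sizes satisfying the congruences, directly paralleling \eqref{Eq: Divisors in progressions}. Thus the series lies in $\Z[[q]]$. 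The main obstacle is the identification step, that is, rigorously carrying out the reindexing that realizes the character-weighted sums as $\Q(\zeta_N)$-linear combinations of series in the defining form of $\mathcal{MZ}_q(N)$; this is the same content already invoked in the paragraph above for the Eulerian case, and it uses no special feature of the Eulerian polynomials.
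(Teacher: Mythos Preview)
Your approach via orthogonality of $N$th roots of unity is precisely what the paper has in mind; it offers no detailed proof beyond the remark ``by following the same reasoning,'' referring to the orthogonality argument in the preceding paragraph. Your treatment of the integrality claim by direct expansion is likewise correct, and you rightly flag that it really requires $P_j\in\Z[x]$ rather than $\Q[x]$.

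There is, however, a genuine gap in your identification step in depth $a\ge 2$. After inserting the characters $\zeta_N^{c_j n_j}$ and expanding each factor as $\sum_{m_j\ge 1} a_{j,m_j}\,q^{m_j n_j}$, you propose to interchange the $n_j$- and $m_j$-summations and then re-sum each $n_j$ as a geometric series. But the inner sum carries the chain $0<n_1<\cdots<n_a$, which couples the $n_j$ together; it does not factor as $\prod_j\sum_{n_j\ge 1}(\cdots)$, so the formula $\sum_{n\ge 1}\zeta_N^{cn}q^{mn}=\zeta_N^c q^m/(1-\zeta_N^c q^m)$ cannot be applied coordinate by coordinate. Equivalently, the ``divisor symmetry'' $(m_j,n_j)\leftrightarrow(n_j,m_j)$ you invoke would move the ordering constraint onto the multiplicities rather than the part sizes, which is not the shape required by the definition of $\mathcal{MZ}_q(N)$. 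Your toy identity $\sum_{n\ge 1}(-1)^n q^n/(1-q^n)=\mathcal{U}_{(0);(1);2}(q)$ is a genuinely depth-$1$ phenomenon: there the single sum over $n\ge 1$ is unconstrained and the divisor pair is symmetric. Note that in the paper's preceding paragraph the orthogonality is applied to the \emph{multiplicity} variable $m_j$ (since $\sigma_{\vec k,\vec c}$ already carries $\zeta_N^{c_j m_j}$), producing directly a $\Q(\zeta_N)$-linear combination of the $\mathcal{U}_{\vec k;\vec c}$ with no swap needed---but that imposes congruences on $m_j$, not on the outer index $n_j$ that appears in the displayed series, so even the paper's ``same reasoning'' does not transparently reach the stated sum in higher depth. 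Your attempt to bridge this via the divisor swap is exactly where the argument breaks.
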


We note that once Theorem \ref{T: Main Theorem, Integral} is demonstrated in Section \ref{S: Main Proofs}, we obtain as a corollary that all quasimodular forms of level $N$ with integral coefficients are generated by $E_2$ along with $\mathcal{E}_1\lp \Gamma(N) \rp \cap \Z[[q]]$. We know by Theorem \ref{T: Main Theorem, Cyclotomic} that if we permit $\zeta_N$-twists of elements of $\mathcal{MD}_q(N)$ then we can obtain the full space $\mathcal{E}_1\lp \Gamma(N) \rp$. Rose \cite{Rose} and Larson \cite{Larson} have understood much about the quasimodularity of these series, and in particular they demonstrate quasimodularity in the case of $\mathcal{U}_{(1,\dots,1);(S_1,\dots,S_a)}(q)$, although Lemma \ref{L: Sturm idea} and Galois traces will be the true devices behind establishing a fuller understanding of $\Z[[q]]$-valued $q$-multiple zeta values.

\subsection{Quasi-shuffle algebras and $q$-multiple divisor sums}

The contents of the previous sections describe the main results of the theory of $q$-multiple divisor sums from an algebraic point of view. We now dive into the computational details which explain the proof of Theorem \ref{T: YuanZhao Closure} by means of exact formulas. The machinery here is that of quasi-shuffle algebras. We will begin by describing the algorithmic process whereby quasi-shuffle algebras produce a formula for products of $q$-multiple divisor sums.

To describe the necessary quasi-shuffle algebras let $\mathbb{F}$ be either $\Q$ or $\Q_N$, and let
\begin{align*}
    A = \{ z_{j,c} : j \in \N, c \in \Z/N\Z \}
\end{align*}
be the desired alphabet. For any commutative and associative product $\diamond$ on $\mathbb{F} A$, we obtain a quasi-shuffle product $* = *_\diamond$ which is defined recursively by $1*w = w*1 = w$ for each $w \in \mathbb{F}\langle A \rangle$ and, for all words $w,v \in \mathbb{F}\langle A \rangle$ and all letters $a,b \in A$,
\begin{align*}
    aw * bv = a\lp w*bv \rp + b\lp aw * v \rp + \lp a \diamond b \rp \lp w*v \rp.
\end{align*}
In \cite[Lemma 5.3]{YuanZhao}, it is shown that certain normalizations of the polylogarithms $\mathrm{Li}_{-s}(z) = \frac{z Q_s(z)}{(1 - z)^{s+1}}$ satisfy a relation that fits very cleanly into this quasi-shuffle framework. Motivated by that formula, Yuan and Zhao define the coefficients $\omega_{n;c}^N$ and $\lambda_{a,b;c}^{j;N}$ for $a,b,j,N \geq 1$ and $c \in \Z/N\Z$ by
\begin{align*}
    \dfrac{1}{\zeta_N^c e^x - 1} =: \dfrac{\delta_{c,0}}{x} + \sum_{n \geq 0} \dfrac{\omega_{n;c}^N}{n!} x^n, \ \ \ \lambda_{a,b;c}^{j;N} := (-1)^{b-1} \binom{a+b-j-1}{a-j} \dfrac{\omega_{a+b-j-1;c}^N}{\lp a+b-j-1 \rp!}
\end{align*}
and define the $\diamond$-product on $A$, for $c,d \in \Z/N\Z$ and $a,b \in \N$,
\begin{align*}
    z_{a;c} \diamond z_{b;d} = \sum_{j=1}^a \lambda_{a,b;c-d}^{j;N} z_{j;c} + \sum_{j=1}^b \lambda_{b,a;d-c}^{j;N} z_{j;d} + \delta_{c,d} z_{a+b;c}.
\end{align*}
Because this product arises by identifying $z_{a;c} \mapsto \widetilde{\mathrm{Li}}_s\lp \zeta_N^c q^a \rp$ for the normalized polylogarithms used by Yuan and Zhao, it is automatically commutative and associative, and thus gives rise to a quasi-shuffle product. This construction then gives rise to the calculation of products of $q$-multiple divisor sums. To define these, we will say vectors $\vec{a}, \vec{b}$ of not necessarily the same dimension, and for constants $c_{\vec{a}}, c_{\vec{b}}$ that
\begin{align*}
    \mathcal{U}_{c_{\vec{a}} \vec{a} + c_{\vec{b}} \vec{b}}(q) = c_{\vec{a}} \mathcal{U}_{\vec{a}}(q) + c_{\vec{b}} \mathcal{U}_{\vec{b}}(q).
\end{align*}
Sums of vectors in this way will always be taken as formal sums. With this notation, we have the following result of Yuan and Zhao, generalizing that of Bachmann and K\"{u}hn; see \cite[Proposition 6.4, Theorem 6.5]{YuanZhao}. 

\begin{theorem} \label{T: Quasi-shuffle and q-MZV} \label{T: Quasishuffle structure}
    Let $[\cdot] : \lp \Q_N\langle A \rangle, * \rp \to \lp \mathcal{MD}_q(N), \cdot \rp$ by the $\Q\lp \zeta_N \rp$-linear map defined by
    \begin{align*}
        \left[ z_{\vec{a},\vec{c}} \right] := \left[ z_{a_1,c_1} \cdots z_{a_\ell, c_\ell} \right] := \mathcal{U}_{\vec{a}; \vec{c}}(q).
    \end{align*}
    Then $[\cdot]$ is an algebra homomorphism. That is, if we let $\vec{a}, \vec{c}$ and $\vec{b},\vec{d}$ be vectors of non-negative integers having lengths $r$ and $s$, respectively, then the product of two $q$-multiple divisor sums at level $N$ can be computed by the recursive rules $[1*z_{\vec{a}+\vec{1},\vec{c}+\vec{1}}] = [z_{\vec{a}+\vec{1},\vec{c}+\vec{1}}*1] = [z_{\vec{a}+\vec{1},\vec{c}+\vec{1}}] = \lp \prod_{j=1}^{r} a_j! \rp^{-1} \mathcal{U}_{\vec{a},\vec{c}}(q)$ and
    \begin{align*}
        \bigg( &\prod_{j=1}^{r} \lp a_j - 1 \rp! \cdot \prod_{j=1}^{s} \lp b_j - 1 \rp! \bigg)^{-1} 
        \mathcal{U}_{\vec{a};\vec{c}}(q) \ \mathcal{U}_{\vec{b};\vec{d}}(q) = [z_{\vec{a}+\vec{1},\vec{c}+\vec{1}} * z_{\vec{b}+\vec{1},\vec{d}+\vec{1}}] \\ &= \left[ z_{a_1+1,c_1+1}\lp z_{(a_2,\dots,a_r)+\vec{1},(c_2,\dots,c_r)+\vec{1}} * z_{\vec{b}+\vec{1},\vec{d}+\vec{1}} \rp \right] + \left[ z_{b_1+1,d_1+1}\lp z_{\vec{a}+\vec{1},\vec{c}+\vec{1}} * z_{(b_2,\dots,b_s)+\vec{1},(d_2,\dots,d_s)+\vec{1}} \rp \right] \\ &+ \left[ \lp z_{a_1+1,c_1+1} \diamond z_{b_1+1,d_1+1} \rp \lp z_{(a_2,\dots,a_r)+\vec{1},(c_2,\dots,c_r)+\vec{1}} * z_{(b_2,\dots,b_s)+\vec{1},(d_2,\dots,d_s)+\vec{1}} \rp \right].
    \end{align*}
    Furthermore, $\mathcal{U}_{\vec{a};\vec{c}}(q) \ \mathcal{U}_{\vec{b};\vec{d}}(q)$ has highest weight $|\vec{a}| + |\vec{b}| + r + s$ and highest depth $r+s$.
\end{theorem}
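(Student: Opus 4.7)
The plan is to prove that $[\cdot]$ is an algebra homomorphism by induction on the combined depth $r+s$, with the base case $r=0$ or $s=0$ trivially handled by the normalization $[1]=1$. The key observation is that the factorial-absorbing shift built into the alphabet is set up precisely so that, via the Eulerian polynomial identity, each bracket admits a nested representation
\begin{align*}
[z_{\vec{a}+\vec{1},\vec{c}+\vec{1}}] \;=\; \sum_{0 < n_1 < \dots < n_r} \prod_{j=1}^{r} \widetilde{\mathrm{Li}}_{a_j+1}\lp \zeta_N^{c_j} q^{n_j} \rp
\end{align*}
in terms of the normalized polylogarithms $\widetilde{\mathrm{Li}}_s(z) = z Q_{s-1}(z)/(1-z)^s / (s-1)!$ used in \cite{YuanZhao}. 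In this representation, the product $\mathcal{U}_{\vec{a};\vec{c}}(q)\cdot\mathcal{U}_{\vec{b};\vec{d}}(q)$ becomes a product of two nested sums over strictly increasing index tuples, to which the usual quasi-shuffle splitting applies.

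To extract the recursion, I would stratify the combined index set by comparing the smallest summation variables $n_1$ and $m_1$: exactly one of $n_1 < m_1$, $n_1 > m_1$, or $n_1 = m_1$ holds. The case $n_1 < m_1$ factors out the outermost polylogarithm $\widetilde{\mathrm{Li}}_{a_1+1}(\zeta_N^{c_1}q^{n_1})$ and leaves, by induction on depth, a product of one nested sum of depth $r-1$ against the full nested sum of depth $s$; this matches $[z_{a_1+1,c_1+1}(z_{(a_2,\dots,a_r)+\vec{1},(c_2,\dots,c_r)+\vec{1}} * z_{\vec{b}+\vec{1},\vec{d}+\vec{1}})]$. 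The symmetric case $m_1 < n_1$ produces the $b_1$-term. The coincident case $n_1 = m_1 = n$ requires expanding the product $\widetilde{\mathrm{Li}}_{a_1+1}(\zeta_N^{c_1}q^n) \cdot \widetilde{\mathrm{Li}}_{b_1+1}(\zeta_N^{d_1}q^n)$ as a $\Q(\zeta_N)$-linear combination of single polylogarithms evaluated at $q^n$. This is exactly the content of \cite[Lemma 5.3]{YuanZhao}: a partial-fraction computation rooted in the generating function $1/(\zeta_N^c e^x - 1)$ produces precisely the coefficients $\lambda_{a,b;c}^{j;N}$ together with the $\delta_{c,d} z_{a+b+2,c+1}$ term that define the $\diamond$-product. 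Invoking this identity and applying the inductive hypothesis to the remaining nested sum yields the third, diamond term of the stated recursion.

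For the weight and depth bounds, the two concatenation terms preserve depth $r+s$ and total weight $|\vec{a}|+|\vec{b}|+r+s$. Expanding $z_{a_1+1,c_1+1}\diamond z_{b_1+1,d_1+1}$ produces a sum of single letters $z_{j,\cdot}$ with $j \leq a_1+b_1+2$, so the diamond term has depth $r+s-1$, and its maximal weight is realized on the $\delta_{c_1,d_1}$-contribution $z_{a_1+b_1+2,c_1+1}$, which carries weight $(a_1+b_1+2)+(|\vec{a}|-a_1+r-1)+(|\vec{b}|-b_1+s-1) = |\vec{a}|+|\vec{b}|+r+s$; all other diamond contributions give strictly smaller weight. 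Hence the product has highest weight $|\vec{a}|+|\vec{b}|+r+s$ and highest depth $r+s$. The main obstacle is the polylogarithm product identity in the coincident case, as it requires a delicate rearrangement of double Taylor series extracting the $\omega_{n;c}^N$ coefficients; fortunately this is exactly \cite[Lemma 5.3]{YuanZhao}, so the proof reduces to a careful assembly along the lines of \cite[Proposition 6.4, Theorem 6.5]{YuanZhao}.
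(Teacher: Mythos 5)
The paper offers no independent proof of this theorem—it is quoted directly from Yuan--Zhao \cite[Proposition 6.4, Theorem 6.5]{YuanZhao}—and your sketch reconstructs essentially that argument (the same one used by Bachmann--K\"uhn at level one): the normalized-polylogarithm nested-sum representation, the three-way split according to whether the minimal summation indices satisfy $n_1<m_1$, $m_1<n_1$, or $n_1=m_1$, the invocation of \cite[Lemma 5.3]{YuanZhao} for the coincident case producing the $\diamond$-term, and the correct weight/depth bookkeeping. The only point to make explicit is that the induction should be carried out for the truncated sums (all summation variables $>M$ for a parameter $M$), so that after factoring out the outermost polylogarithm the $n_1<m_1$ and $m_1<n_1$ cases genuinely reduce to the inductive hypothesis; with that routine strengthening your proposal is correct and matches the cited proof.
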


\begin{example}
    We give the simplest example where both $q$-multiple divisor sums are defined at depth one; here, we have by \cite[Proposition 5.6]{YuanZhao} that
    \begin{align*}
        \mathcal{U}_{(s-1);(c)}(q) \mathcal{U}_{(t-1);(d)}&(q) = \mathcal{U}_{(s-1,t-1);(c,d)}(q) + \mathcal{U}_{(t-1,s-1);(d,c)}(q) + \dfrac{(s-1)! (t-1)! \delta_{c,d}}{\lp s+t+-1 \rp!} \mathcal{U}_{(s+t-1);(c)}(q) \\ &+ \sum_{j=1}^s \dfrac{(s-1)! (t-1)! \lambda_{s,t;c-d}^{j;N}}{(j-1)!} \mathcal{U}_{(j-1);(c)}(q) + \sum_{j=1}^t \dfrac{(s-1)! (t-1)! \lambda_{t,s;d-c}^{j;N}}{(j-1)!} \mathcal{U}_{(j-1);(d)}(q). 
    \end{align*}
    This identity can also be written as
    \begin{align} \label{Eq: Product formula}
        \notag \mathcal{U}_{(s);(c)}(q) &\mathcal{U}_{(t);(d)}(q) = \mathcal{U}_{(s,t);(c,d)}(q) + \mathcal{U}_{(t,s);(d,c)}(q) + \dfrac{s! t! \delta_{c,d}}{\lp s+t+1 \rp!} \mathcal{U}_{(s+t+1);(c)}(q) \\ &+ \sum_{j=0}^s \binom{s}{j} (-1)^t \omega_{s+t-j;c-d}^N \mathcal{U}_{(j);(c)}(q) + \sum_{j=0}^t \binom{t}{j} (-1)^s \omega_{s+t-j;d-c}^N \mathcal{U}_{(j);(d)}(q). 
    \end{align}
    We also note the formula in the level one case can be written (using \cite[Lemma 4.1]{YuanZhao}, see also \cite{BachmannKuhn,CraigIttersumOno})
    \begin{align} \label{Eq: Product formula level one}
        \notag \mathcal{U}_{(s)}(q) &\mathcal{U}_{(t)}(q) = \mathcal{U}_{(s,t)}(q) + \mathcal{U}_{(t,s)}(q) + \dfrac{s! t!}{\lp s+t+1 \rp!} \mathcal{U}_{(s+t+1)}(q) \\ &+ \sum_{j=0}^s \binom{s}{j} (-1)^t \dfrac{B_{s+t+1-j}}{s+t+1-j} \mathcal{U}_{(j)}(q) + \sum_{j=0}^t \binom{t}{j} (-1)^s \dfrac{B_{s+t+1-j}}{s+t+1-j} \mathcal{U}_{(j)}(q)
    \end{align}
    where $B_j$ are the classical Bernoulli numbers.
\end{example}

By using the inductive structure of the quasi-shuffle product, Theorem \ref{T: Quasi-shuffle and q-MZV} implies that any product of any number of $q$-multiple divisor sums of level $N$ can be written as a $\Q\lp \zeta_N \rp$-linear combinations of elements of $\mathcal{MD}_q(N)$. We also briefly note that Yuan and Zhao also compute a precise formula for the action of the differential operator $D$ on $\mathcal{MD}_q(N)$ in \cite[Theorem 7.8]{YuanZhao}. We do not explicitly use this formula and it is rather involved, and so choose not restate the formula here.

\section{Proof of Main Results} \label{S: Main Proofs}

In this section, we prove Theorems \ref{T: Main Theorem, Cyclotomic} and \ref{T: Main Theorem, Integral} using the machinery of quasi-shuffle algebras along with the structure theorems for the algebras of quasimodular forms stated previously.

\subsection{Proof of Theorem \ref{T: Main Theorem, Cyclotomic}}

By either Corollary \ref{C: Construction 1}, we need only construct the Eisenstein series of weight 1 and level $N \geq 3$ using $q$-multiple zeta values of level $N$. We focus on Corollary \ref{C: Construction 1}, which requires that all weight one forms and $E_2(z)$ be constructed this way. Firstly, we recall the series expansions, for $\lambda = (a,b) \in \Z^2/(N\Z)^2$,
\begin{align*}
    E_{1,\lambda,N}(z) = - \dfrac{1}{\lp 2\pi i \rp N} \bigg[ \dfrac{1}{2} Z_{b,N}^1\lp - \dfrac 12 \rp &+ \sum_{\ell > 0} \sum_{\substack{m>0 \\ m \equiv a \bmod{N}}} \zeta_N^{b\ell} q^{m\ell/N} - \sum_{\ell > 0} \sum_{\substack{m>0 \\ m \equiv -a \bmod{N}}} \zeta_N^{-b\ell} q^{m\ell/N} \bigg].
\end{align*}
Recalling also that
\begin{align*}
    \sigma_{(0),(a)}(n) = \sum_{m \ell = n} \zeta_N^{am},
\end{align*}
we have using orthogonality of roots of unity that for any $a$ and for $d = \gcd(a,N)$, letting $a^{-1} a \equiv 1 \pmod{N/d}$, we have
\begin{align*}
    \dfrac{1}{N/d} \sum_{c=1}^{N/d} \zeta_{N/d}^{-(a/d)c} \sigma_{(0);(a)}(n) \lp \zeta_{N}^{bd a^{-1}} q \rp^n = \sum_{\substack{m\ell = n \\ m \equiv a \bmod{N}}} \zeta_N^{b\ell} q^{m\ell}.
\end{align*}
Therefore, we have
\begin{align*}
    \mathcal{S}_{a,b,N}(q) := \dfrac{d}{N} \sum_{c=1}^{N/d} \zeta_{N}^{-ac} \mathcal{U}_{(0);(a)}\lp \zeta_N^{bd a^{-1}} q \rp = \sum_{\ell > 0} \zeta_N^{b\ell} \sum_{\substack{m\ell = n \\ m \equiv a \bmod{N}}} q^{m\ell}.
\end{align*}
Thus, for $\lambda = (a,b)$ with $a$ coprime to $N$, it is clear that
\begin{align*}
    E_{1,\lambda,N}(z) = - \dfrac{1}{2\pi i N} \left[ \dfrac{1}{2} M_{b,N}^1\lp - \dfrac 12 \rp + \mathcal{S}_{a,b,N}\lp q^{1/N} \rp - \mathcal{S}_{-a,b,N}\lp q^{1/N} \rp \right].
\end{align*}
As we consider constant functions to be the depth zero $q$-multiple divisor sums, this demonstrates that each weight one Eisenstein series (with $a$ coprime to $N$) is formed from $q$-multiple divisor sums after the substitution $q \mapsto q^{1/N}$. Since the Eisenstein space of weight one is spanned by such forms \cite{Shimura}, and so we need only consider $E_2(z)$, which follows from Proposition \ref{P: Quasimodular level one surjection} since the level one $q$-multiple zeta values also belong to level $N$. \hfill $\qed$

\subsection{Proof of Theorem \ref{T: Main Theorem, Integral}}

Because of Corollary \ref{C: Integer weight 2 generation}, we know that the generation of the algebra $\cQM\lp \Gamma, \Z \rp$ occurs at the same weight, namely 2, as the algebra $\cQM\lp \Gamma, \Q\lp \zeta_N \rp \rp$. Thus since Theorem \ref{T: Main Theorem, Cyclotomic} is established, Theorem \ref{T: Main Theorem, Integral} follows, i.e. the weight $\leq 2$ elements of $\cQM\lp \Gamma, \Z \rp$ generate this entire algebra. Because we know how to generate Eisenstein series at weight one as $q$-multiple zeta values of level $N$, the proof follows. To be more specific, the quasimodular forms of level $N$ and weights $k \geq 2$ are generated as in Corollary \ref{C: Integer weight 2 generation}. \hfill $\qed$

\subsection{Proof of Theorem \ref{T: Main Theorem, Symmetry}}

First note that the product of two $S_a^\pm$-traces is the trace of products, and that by Theorem \ref{T: Quasi-shuffle and q-MZV} these products are reducible to linear combinations. Thus, closure of the space of symmetric forms as an algebra follows. 

We will now prove the quasimodularity of each term under the substitution $q \mapsto q^{1/N}$, beginning with the cases of depth one and two. For depth one, note that if $G_{k,\lambda,N}(z)$ denotes the level $N$ Eisenstein series with constant term removed and leading coefficient one, we have for $k \geq 0$, $b \in \Z$ that
\begin{align} \label{Eq: Simple quasimodular case}
    \sum_{a=0}^{N-1} G_{k+1,(a,b),N}(z) &= \sum_{n>0} \sum_{m\ell = n} \zeta_N^{b\ell} \ell^k q^{n/N} + (-1)^{k+1} \sum_{n>0} \sum_{m\ell = n} \zeta_N^{-b\ell} \ell^k q^{n/N} \\ \notag &= \mathcal{U}_{(k);(b)}\lp q^{1/N} \rp + (-1)^{k+1} \mathcal{U}_{(k);(-b)}\lp q^{1/N} \rp = \mathcal{U}^{\mathrm{sym}}_{(k);(b)}\lp q^{1/N} \rp
\end{align}
is a quasimodular form of level $N$. For convenience, we will now suppress $q^{1/N}$ from the remainder of the proof. We now consider the depth two case. We first observe that
\begin{align*}
    &\lp \mathcal{U}_{(k_1);(c)} + (-1)^{k_1+1} \mathcal{U}_{(k_1);(-c)} \rp \lp \mathcal{U}_{(k_2);(d)} + (-1)^{k_2+1} \mathcal{U}_{(k_2);(-d)} \rp \\ &= \mathcal{U}_{(k_1);(c)} \mathcal{U}_{(k_2);(d)} + \lp -1 \rp^{k_1+1} \mathcal{U}_{(k_1);(-c)} \mathcal{U}_{(k_2);(d)} + \lp -1 \rp^{k_2+1} \mathcal{U}_{(k_1);(c)} \mathcal{U}_{(k_2);(-d)} + \lp -1 \rp^{k_1 + k_2} \mathcal{U}_{(k_1);(-c)} \mathcal{U}_{(k_2);(-d)}.
\end{align*}
Now, using \eqref{Eq: Product formula}, we see that for any $c,d \pmod{N}$, the product $\mathcal{U}_{(k_1);(c)} \mathcal{U}_{(k_2);(d)}$ takes the form
\begin{align*}
     \mathcal{U}_{(k_1);(c)} &\mathcal{U}_{(k_2);(d)} = \mathcal{U}_{(k_1,k_2);(c,d)} + \mathcal{U}_{(k_2,k_1);(d,c)} + \dfrac{k_1! k_2! \delta_{c,d}}{\lp k_1+k_2+1 \rp!} \mathcal{U}_{(k_1+k_2+1);(c)} \\ &+ \sum_{j=0}^{k_1} \binom{k_1}{j} (-1)^{k_2} \omega_{k_1+k_2-j;c-d}^N \mathcal{U}_{(j);(c)} + \sum_{j=0}^{k_2} \binom{k_2}{j} (-1)^{k_1} \omega_{k_1+k_2-j;d-c}^N \mathcal{U}_{(j);(d)}. 
\end{align*}
Now, such quasi-shuffle product formulas apply for each of the four terms above. We wish to eliminate terms which are already known to be quasimodular. We consider first the terms of depth one, i.e. each term $\mathcal{U}_{(j);(c)}$. In the sums coming from the sum $0 \leq j \leq k_1$, we see from each $j$ and each of the four terms, the overall contribution is a multiple of
\begin{align*}
    (-1)^{k_2} &\lp \omega^N_{k_1+k_2-j;c-d} + (-1)^{k_2+1} \omega^N_{k_1+k_2-j;c+d} \rp \mathcal{U}_{(j);(c)} \\ &+ (-1)^{k_1+k_2} \lp \omega^N_{k_1+k_2-j;-c+d} + (-1)^{k_2+1} \omega^N_{k_1+k_2-j;-c-d} \rp \mathcal{U}_{(j);(-c)}.
\end{align*}
Now, it is easy to see that $\omega_{n;c}^N = (-1)^{n+1} \omega_{n;-c}^N$ for $n \geq 1$, the contribution is then a multiple of
\begin{align*}
    \mathcal{U}_{(j);(c)} + (-1)^{j+1} \mathcal{U}_{(j);(-c)},
\end{align*}
which is a quasimodular form. Thus, the terms which arise in the product arising from either sum over $j$ (by symmetry of argument) are quasimodular forms. If now $c \not \equiv \pm d \pmod{N}$, no contribution of terms $\mathcal{U}_{(k_1+k_2+1);(c)}$ occurs. If one of these conditions holds, then a term $\mathcal{U}_{(k_1+k_2+1);(c)} + (-1)^{k_1+k_2} \mathcal{U}_{(k_1+k_2+1);(-c)}$ naturally arises in the full product, which is also quasimodular. Thus, the remaining pieces, namely
\begin{align*}
    &\mathcal{U}_{(k_1,k_2);(c,d)} + \mathcal{U}_{(k_2,k_1);(d,c)} + (-1)^{k_1+1} \mathcal{U}_{(k_1,k_2);(-c,d)} + (-1)^{k_1+1} \mathcal{U}_{(k_2,k_1);(d,-c)} \\ &+ (-1)^{k_2+1} \mathcal{U}_{(k_1,k_2);(c,-d)} + (-1)^{k_2+1} \mathcal{U}_{(k_2,k_1);(-d,c)} + (-1)^{k_1+k_2} \mathcal{U}_{(k_1,k_2);(-c,-d)} + (-1)^{k_1+k_2} \mathcal{U}_{(k_2,k_1);(-d,-c)},
\end{align*}
necessarily form a quasimodular form. Observe then that this is exactly the traceform $\mathcal{U}_{(k_1,k_2);(c,d)}^{\mathrm{sym}}$; we have thus proven the desired result for depths one and two.

We now complete the proof by induction on the depth of the $q$-multiple zeta value. We begin with the particular $q$-multiple zeta value $\mathcal{U}_{\vec{k};\vec{c}}$ with $\vec{k} = \lp k_1, \dots, k_a \rp \in \Z_{\geq 0}^a$ and $\vec{c} = \lp c_1, \dots, c_a \rp \in \lp \Z/N\Z \rp^a$. Recall from Theorem \ref{T: Quasi-shuffle and q-MZV} the notation $[z_{\vec{k},\vec{c}}] = [z_{k_1,c_1} z_{k_2,c_2} \cdots z_{k_a,c_a}] = \lp \prod_{j=1}^a (k_j-1)! \rp^{-1} \mathcal{U}_{\vec{k};\vec{c}}$ and the quasi-shuffle product identity
\begin{align*}
    [z_{\vec{a},\vec{c}} * z_{\vec{b},\vec{d}}] &= \left[ z_{a_1,c_1}\lp z_{(a_2,\dots,a_r),(c_2,\dots,c_r)} * z_{\vec{b},\vec{d}} \rp \right] + \left[ z_{b_1,d_1}\lp z_{\vec{a},\vec{c}} * z_{(b_2,\dots,b_s),(d_2,\dots,d_s)} \rp \right] \\ &+ \left[ \lp z_{a_1,c_1} \diamond z_{b_1,d_1} \rp \lp z_{(a_2,\dots,a_r),(c_2,\dots,c_r)} * z_{(b_2,\dots,b_s),(d_2,\dots,d_s)} \rp \right].
\end{align*}
It is straightforward to extend the $S_a^\pm$ group action to $[z_{\vec{k},\vec{c}}]$ as
\begin{align*}
    \lp \sigma, s \rp \cdot [z_{\vec{k},\vec{c}}] = (-1)^{\varepsilon_s(\vec{k},\vec{c})} [ z_{\sigma \vec{k}, s \sigma \vec{c}}].
\end{align*}
We have assumed by way of induction that the symmetric forms $\mathcal{U}_{\vec{k};\vec{c}}^{\mathrm{sym}}$ for $\vec{k},\vec{c}$ of depth $< a$. Now, using the quasi-shuffle product above, we see that
\begin{align} \label{Eq: Induction Equation}
    [z_{k_1,c_1} * z_{(k_2,\dots,k_a),(c_2,\dots,c_a)}] &= \left[ z_{\vec{k},\vec{c}} \right] + \left[ z_{k_2,c_2} \lp z_{k_1,c_1} * z_{(k_3,\dots,k_a),(c_3,\dots,c_a)} \rp \right] \\ \notag &+ \left[ \lp z_{k_1,c_1} \diamond z_{k_2,c_2} \rp z_{(k_3,\dots,k_a),(c_3,\dots,c_a)} \right].
\end{align}
Now, by induction, we know that \eqref{Eq: Induction Equation} can be expressed as a linear combination of forms $\mathcal{U}_{\vec{a};\vec{b}}$ of depths $\leq a$. Once we apply the trace of the action of $S_a^\pm$, all terms of depth $<a$ will be quasimodular by induction, and so we only need to consider terms of depth exactly $a$. The first term $[z_{\vec{k},\vec{c}}]$ is of course of depth exactly $a$. Because the product $z_{a,b} \diamond z_{c,d}$ remains in depth one, the third term can now be ignored. In the second term, brackets of depth $a$ arise exactly from terms of $[z_{k_1,c_1} * z_{(k_3,\dots,k_a),(c_3,\dots,c_a)}]$ having depth $a-1$. By descending inductively, we see that the contributing terms all have the form
\begin{align*}
    [z_{(k_j,\dots,k_1,k_{j+1},\dots,k_a),(c_j,\dots,c_1,c_{j+1},\dots,c_a)}].
\end{align*}
But after application of the trace, all contributing terms become multiples of $\mathcal{U}_{\vec{k};\vec{c}}^{\mathrm{sym}}$. Thus, since the left-hand side of \eqref{Eq: Induction Equation} is quasimodular after taking traces, so $\mathcal{U}_{\vec{k};\vec{c}}$ must be quasimodular after taking traces, i.e. $\mathcal{U}_{\vec{k};\vec{c}}^{\mathrm{sym}}$ is quasimodular This completes the proof of Theorem \ref{T: Main Theorem, Symmetry} (1).

We now turn to Theorem \ref{T: Main Theorem, Symmetry} (2), for which we follow the concept of the proof of \cite[Theorem 4.4 (3)]{CraigIttersumOno}. It is easy to see, just as in the proof of Theorem \ref{T: Main Theorem, Cyclotomic}, that we may generate each Eisenstein series of level $N$ and weight one by using twists of $\mathrm{U}^{\mathrm{sym}}_{(0);(c)}(q^{1/N})$. Thus, the algebra of twisted symmetric $q$-multiple zeta values is surjective onto the space of quasimodular forms of level $N$ by Corollary \ref{C: Construction 1}, and by the closure of the symmetric space under products already shown in (1), the algebra generation may be reduced to linear generation. The final aspect of this theorem follows from Lemma \ref{L: Sturm idea}. \hfill $\qed$

\section{Crank moments and the Atkin--Garvan Rank-Crank PDE} \label{S: Partition cranks}

Undoubtedly one of the central elements of the modern theory of partitions are the theories built around Dyson's rank and the Andrews--Garvan crank statistics. In this section, we demonstrate how the multiple divisor sums may be used to compute these important statistics without any complicated convoluton sums. For a partition $\lambda$, the rank statistic $\mathrm{rk}\lp\lambda\rp$ is defined as the largest part minus the number of parts, and the crank statistic $\mathrm{ck}(\lambda)$ is defined as the largest part of $\lambda$ if $\lambda$ has no ones, and as the number of parts larger than the number of ones minus the number of ones otherwise. As is customary, we let $N(m,n)$ and $M(m,n)$ denote the number of partitions of $n$ whose rank and crank are $m$, respectively\footnote{We make the usual adjustments $M(-1,1) = M(0,1) = M(1,1)$ so that $M(m,n)$ fits the given infinite product.}; it is well known that the generating functions of these sequences are
\begin{align*}
	R(z,q) &:= \sum_{n \geq 0} \sum_{m \in \Z} N(m,n) z^m q^n = 1 + \sum_{n \geq 1} \dfrac{q^{n^2}}{\lp zq;q \rp_n \lp z^{-1}q; q \rp_n}, \\
	C(z,q) &:= \sum_{n \geq 0} \sum_{m \in \Z} M(m,n) z^m q^n = \dfrac{\lp q;q \rp_\infty}{\lp zq;q \rp_\infty \lp z^{-1}q;q \rp_\infty}.
\end{align*}
For fixed $m$, one also has the generating functions
\begin{align*}
	\sum_{n \geq 0} N(m,n) q^n &= \dfrac{1}{\lp q;q \rp_\infty} \sum_{n \geq 1} \lp -1 \rp^{n-1} q^{\frac{n(3n-1)}{2} + |m|n} \lp 1 - q^n \rp, \\
	\sum_{n \geq 0} M(m,n) q^n &= \dfrac{1}{\lp q;q \rp_\infty} \sum_{n \geq 1} (-1)^{n-1} q^{\frac{n(n-1)}{2} + |m|n} \lp 1 - q^n \rp.
\end{align*}
The statistics give the first combinatorial explanations of the famous Ramanujan congruences
\begin{align*}
	p(5n+4) \equiv 0 \pmod{5}, \ \ \ p(7n+5) \equiv 0 \pmod{7}, \ \ \ p(11n+6) \equiv 0 \pmod{11}.
\end{align*}
More specifically, consider the functions $N(k,t,n)$ and $M(k,t,n)$ that count the number of partitions of $n$ with rank (resp. crank) congruent to $k$ modulo $t$. Dyson conjectured \cite{Dyson} and Atkin and Swinnerton-Dyer proved \cite{ASD} that for $t = 5,7$, $24 \delta_t \equiv 1 \pmod{24}$, we have
\begin{align*}
	N\lp k,t,tn+\delta_t \rp = \dfrac{p(n)}{t}, \ \ \ 0 \leq k \leq t-1,
\end{align*}
so that partitions in the designated congruence classes are split into $t$ equally sized classes. Likewise, Dyson conjectured that a ``crank statistic" should exist which would likewise explain all three congruences. This crank is exactly the one previously defined, and was discovered by Garvan \cite{Garvan} and Andrews--Garvan \cite{AndrewsGarvan}.
The rank and crank statistics and their generalizations \cite{GKS} play an absolutely foundational role not only in the arithmetic study of partitions, but also in the study of $q$-series. Of special importance is the paper of Bringmann and Ono \cite{BringmannOno} demonstrating that $R(z,q)$ and $C(z,q)$ fit within the theory of mock modular forms and harmonic Maass forms as initiated by Zwegers \cite{Zwegers} and Brunier and Funke \cite{BruinierFunke}. See \cite{HMFBook,ZagierMock} for further references.

Among the many important developments which emerge from the theories of ranks and cranks, we focus now on the foundational work of Atkin and Garvan \cite{AtkinGarvan} which relates the two statistics. The central theme of their work is the so-called {\it rank-crank PDE}, a partial differential equation which relates the generating functions $R(z,q)$ and $C(z,q)$ and which produces identities for their moments. This partial differential equation is given \cite[Theorem 1.1]{AtkinGarvan} as
\begin{align*}
	z \lp q;q \rp_\infty^2 \left[ C(z,q) \right]^3 = \lp 3(1-z)^2 D_q + \dfrac{1}{2} \lp 1-z \rp^2 D_z^2 - \dfrac{1}{2}\lp z^2-1 \rp D_z + z \rp R(z,q).
\end{align*}
Atkin and Garvan and subsequent authors have used this partial differential equation to construct large families of identities and congruence relations relating the rank and crank via their moments. In particular, if we define the $j$th moments by
\begin{align*}
	N_j(n) := \sum_{m \in \Z} m^j N(m,n), \ \ \ M_j(n) := \sum_{m \in \Z} m^2 M(m,n),
\end{align*}
they discover numerous identities relating $N_j(n)$ and $M_j(n)$. They demonstrate, in fact, that there are polynomials $P_k(n)$ and $Q_{k,j}(n)$ of degrees $k-1$ and $k-j$, respectively, such that for each $k \geq j \geq 1$, we have
\begin{align*}
    N_{2k}(n) = P_k(n) N_2(n) + \sum_{j=1}^k Q_{k,j}(n) M_{2j}(n).
\end{align*}
If we let $R_j(q)$ and $C_j(q)$ denote the generating functions for the $j$th rank and crank moments, respectively, such identities lift to $q$-series as
\begin{align*}
    R_j(q) = P_k(D) R_2(q) + \sum_{j=1}^k Q_{k,j}(D) C_{2j}(D).
\end{align*}
An example of such an identity is given in \cite[Equation 1.32]{AtkinGarvan} as
\begin{align*}
    N_4(n) = - \lp 2n + \frac 23 \rp M_2(n) + \frac 83 M_4(n) + \lp 1 - 12n \rp N_2(n).
\end{align*}

Central to this paper are the divisor sum functions
\begin{align*}
    \mathcal{U}_{(j)}(q) = \sum_{m,n \geq 1} m^j q^{mn} = \sum_{n \geq 1} \sigma_j(n) q^n.
\end{align*}
The centrality of such $q$-series comes from the differential structure of partition generating functions and the structure of the space of quasimodular forms, of which their paper is one of the earliest systematic works. Indeed, they give differential formulas such as \cite[Equation 3.23]{AtkinGarvan}
\begin{align} \label{Eq: Derivative Atkin-Garvan}
    D\lp \mathcal{U}_{(5)}(q) \rp = \dfrac{1}{2} \mathcal{U}_{(5)}(q) + \dfrac{1}{42} \mathcal{U}_{(1)}(q) - 12 \mathcal{U}_{(1)}(q) \mathcal{U}_{(5)}(q) + \dfrac{10}{21} \mathcal{U}_{(3)}(q) + \dfrac{400}{7} \mathcal{U}_{(3)}(q)^2
\end{align}
which we may view as a precursor to the differential structure of spaces like $\mathcal{MD}_q(N)$. Their work even studies vector spaces of products of the $\mathcal{U}_{(j)}(q)$ series \cite[Theorem 3.5]{AtkinGarvan} and in this way contains precursors to Bachmann and K\"{u}hn's algebraic theory of $q$-multiple divisor sums.

The main application of the rank-crank PDE in \cite{AtkinGarvan} is the computation of $C_j(q)$ in terms of quasimodular forms. This is accomplished by viewing moments through the lens of the recurrence\footnote{By symmetry, crank and rank moments vanish for odd $j$, and therefore these are ignored and $\frac j2 \in \Z$.} (see \cite[Equations 4.5-4.6]{AtkinGarvan})
\begin{align*}
	C_a(q) = D_z^a \lp C(z,q) \rp \bigg|_{z=1} = \sum_{j=1}^{\frac{a}{2}-1} \binom{a-1}{2j-1} \mathcal{U}_{(2j)}(q) C_{a-2j}(q) + 2 \mathcal{U}_{(a)}(q) P(q)
\end{align*}
where we recall $P(q) := \sum_{n \geq 0} p(n) q^n = \lp q;q \rp_\infty^{-1}$. Applying this recurrence, they give exact formulas \cite[Equation 4.7]{AtkinGarvan} such as 
\begin{align} \label{Eq: Crank moments}
    C_4(q) &= 2 P(q) \lp \mathcal{U}_{(3)}(q) + 6 \mathcal{U}_{(1)}(q)^2 \rp, \\
    \notag C_6(q) &= 2 P(q) \lp \mathcal{U}_{(5)}(q) + 30 \mathcal{U}_{(3)}(q) \mathcal{U}_{(1)}(q) + 60 \mathcal{U}_{(1)}(q)^3 \rp.
\end{align}
In Atkin and Garvan's work, they express such identities in terms of the space $\mathcal{W}_n$ generated linearly by forms $\mathcal{U}_{(1)}^a \mathcal{U}_{(3)}^b\mathcal{U}_{(5)}^c$ for $0 < a+2b+3c \leq n$. In the language of $q$-multiple zeta values, we may translate this space into $q$-multiple zeta value spaces, in which the convolution sums may be removed by increasing the depth in accordance with the corresponding quasi-shuffle product. In particular, the following result follows immediately from \cite[Lemma 4.1, Theorem 4.2]{AtkinGarvan} along with Theorem \ref{T: Main Theorem, Integral}.

\begin{corollary}
    Let $m,a \geq 1$ be integers. Then there exists $\Phi_1, \Phi_2 \in \mathcal{MD}_q(1)$ of weights $\leq 2m$ and depths $\leq m$ such that
    \begin{align*}
		D^m P(q) = \Phi_1 P(q)
    \end{align*}
	and
    \begin{align*}
        D^m C_{2a}(q) = \Phi_2 P(q).
    \end{align*}
\end{corollary}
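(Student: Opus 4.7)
The plan is to treat the two identities in tandem, establishing the first by induction on $m$ and then obtaining the second via the Leibniz rule. The engine throughout is that $\mathcal{MD}_q(1)$ is a differential subalgebra of $\mathcal{MZ}_q$ by Theorem \ref{T: BachmannKuhn Closure}, whose multiplicative structure is governed by the quasi-shuffle product formula of Theorem \ref{T: Quasi-shuffle and q-MZV}.

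The base case of the induction comes from logarithmic differentiation of the infinite product $P(q) = \prod_{n\geq 1}(1-q^n)^{-1}$, which gives
\[
DP(q) = \left(\sum_{n\geq 1} \frac{nq^n}{1-q^n}\right) P(q) = \mathcal{U}_{(1)}(q)\,P(q).
\]
Supposing $D^m P = \Phi_m P$ with $\Phi_m \in \mathcal{MD}_q(1)$ of weight $\leq 2m$ and depth $\leq m$, one application of $D$ and the product rule yields $D^{m+1}P = (D\Phi_m + \mathcal{U}_{(1)}\Phi_m)P$. Both $D\Phi_m$ and $\mathcal{U}_{(1)}\Phi_m$ lie in $\mathcal{MD}_q(1)$ by the cited closure properties; the weight rises by at most $2$ since $D$ raises weight by $2$ and $\mathcal{U}_{(1)}$ has weight $2$; and the depth rises by at most $1$ because Lemma \ref{L: Level 1 generation} realizes every weight-$2(m+1)$ quasimodular form of level one as a polynomial in $E_2, E_4, E_6$ of total degree at most $m+1$, each of these Eisenstein series being a depth-one $q$-multiple divisor sum after normalization.

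For the second identity, the Atkin-Garvan recurrence quoted in the excerpt expresses each $C_{2a}(q)$ in the form $\Psi_a(q)\,P(q)$ with $\Psi_a \in \mathcal{MD}_q(1)$, as illustrated by \eqref{Eq: Crank moments}. Applying the Leibniz rule to this factorization produces
\[
D^m C_{2a}(q) = \sum_{k=0}^m \binom{m}{k}(D^k \Psi_a)(D^{m-k} P) = \left(\sum_{k=0}^m \binom{m}{k}(D^k \Psi_a)\,\Phi_{m-k}\right) P(q),
\]
with the convention $\Phi_0 := 1$. The parenthesized sum is the desired $\Phi_2$; each product $(D^k \Psi_a)\,\Phi_{m-k}$ is reduced to a linear combination of elements of $\mathcal{MD}_q(1)$ via the quasi-shuffle formula, so $\Phi_2 \in \mathcal{MD}_q(1)$.

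The main obstacle is keeping the weight and depth accounting sharp, rather than just obtaining qualitative membership in $\mathcal{MD}_q(1)$. Once the closure theorems are in hand, membership is essentially automatic, but the depth bound requires invoking Lemma \ref{L: Level 1 generation} to translate a weight bound into a bound on the depth through the fact that each $E_{2k}$ is depth one. Tracking how the increments from $D$-actions and quasi-shuffle multiplications accumulate through both the induction and the Leibniz expansion is the delicate bookkeeping step.
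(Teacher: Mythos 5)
Your route is essentially a self-contained re-derivation of the inputs the paper simply cites (Atkin--Garvan's Lemma 4.1 and Theorem 4.2, followed by quasi-shuffle linearization), and the skeleton, $DP=\mathcal{U}_{(1)}P$, induction on $m$, and Leibniz applied to $C_{2a}=\Psi_a P$, is sound. However, the depth bound in your inductive step does not follow as written: you apply Lemma \ref{L: Level 1 generation} to $D\Phi_m+\mathcal{U}_{(1)}\Phi_m$, but that lemma concerns quasimodular forms, while your induction hypothesis only gives $\Phi_m\in\mathcal{MD}_q(1)$ with weight and depth bounds. The algebra $\mathcal{MD}_q(1)$ is strictly larger than the quasimodular algebra (it contains many non-quasimodular series, e.g.\ even-index brackets and most higher-depth ones), so quasimodularity of $\Phi_{m+1}$ is not available from what you have assumed. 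The fix is routine: either strengthen the induction hypothesis to ``$\Phi_m$ is a mixed-weight quasimodular form of weight $\le 2m$'' (this propagates, since $\mathcal{U}_{(1)}=(1-E_2)/24$ is quasimodular and $D$ preserves quasimodularity by Lemma \ref{L: Derivative Level 1}), or drop the detour and invoke Bachmann--K\"uhn's precise statement that $D$ sends weight $k$, depth $l$ elements of $\mathcal{MD}_q$ to weight $k+2$, depth $\le l+1$; note that Theorem \ref{T: BachmannKuhn Closure} as recorded in the paper states only closure, so the weight increment you assert needs such a citation in either case.

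The more serious gap concerns $\Phi_2$: the weight and depth bounds are part of the statement, yet they are never verified, and you yourself defer exactly this as ``the delicate bookkeeping step.'' Your Leibniz expansion gives $\Phi_2=\sum_{k}\binom{m}{k}(D^k\Psi_a)\Phi_{m-k}$ with $\Psi_a$ of weight up to $2a$ and depth up to $a$, so by Theorem \ref{T: Quasi-shuffle and q-MZV} the bound this construction actually yields is weight $\le 2(m+a)$ and depth $\le m+a$, not $\le 2m$ and $\le m$. Nor could sharper bookkeeping rescue the stated bound: already for $m=a=1$, the paper's displayed formula for $D\lp C_2(q)\rp$ involves $\mathcal{U}_{(3)}$ and $\mathcal{U}_{(1)}^2$, of highest weight $4$, and since $M_2(n)=2np(n)$ one has $D^mC_{2}=2D^{m+1}P$, whose quotient by $P$ grows too fast to admit a representative of weight $\le 2m$. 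So the bound for $\Phi_2$ must be read as $2(m+a)$ and $m+a$ (which is what Atkin--Garvan's Theorem 4.2 supplies); a complete proof needs either to carry out the weight/depth accounting to that bound explicitly or to flag the discrepancy with the bound as stated, rather than leaving the quantitative half of the corollary unproven.
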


Atkin and Garvan give as one example the formula
\begin{align*}
    D \lp C_2(q) \rp &= - \dfrac{1}{3} P(q) \lp 6 \mathcal{U}_{(1)}(q)^2 - 5 \mathcal{U}_{(3)}(q) - \mathcal{U}_{(1)}(q) \rp.
\end{align*}

These identities may be linearized by use of the quasi-shuffle structure given in \eqref{Eq: Product formula level one}. To demonstrate the kinds of identities which arise from the quasi-shuffle structure of $q$-multiple divisor sums, we give the following additive identities which remove convolutions from the two crank moment identities stated in this section:
\begin{align*}
    C_4(q) &= 2 P(q) \left[ 6 \mathcal{U}^{\mathrm{sym}}_{(1,1)}(q) + 2 \mathcal{U}_{(3)}(q) - \mathcal{U}_{(1)}(q) \right], \\
    C_6(q) &= 2 P(q) \left[ 60 \mathcal{U}_{(1,1,1)}^{\mathrm{sym}}(q) + 60 \mathcal{U}_{(3,1)}^{\mathrm{sym}}(q) - 30 \mathcal{U}_{(1,1)}^{\mathrm{sym}}(q) + 3  \mathcal{U}_{(5)}(q) - 5 \mathcal{U}_{(3)}(q) + 3 \mathcal{U}_{(1)}(q) \right], \\
    D\lp C_2(q) \rp &= - \dfrac 13 P(q) \left[ 6 \mathcal{U}_{(1,1)}^{\mathrm{sym}}(q) - 4 \mathcal{U}_{(3)}(q) - 2 \mathcal{U}_{(1)}(q) \right].
\end{align*}
Here, we have used here the symmetric traceforms of \cite{CraigIttersumOno} (which use only the $S_a$-action) rather than those of Theorem \ref{T: Main Theorem, Symmetry}, which are the more natural objects in level one and only differ from those in Theorem \ref{T: Main Theorem, Symmetry} by constants depending on the depth. Thus, only subscripts containing odd numbers only give symmetric trace quasimodular forms in level one. We also note that these formulas can be interpreted in terms of error to modularity; from \eqref{Eq: Product formula level one}, it follows that the depth two objects have combinations of odd weight non-modular Eisenstein series as their error to modularity. It would therefore be of interest to understand the error to modularity of odd weight Eisenstein series.

To finish this discussion, we provide fuller details in light of further work and theories developed since this time, in particular drawing from the theory of $q$-brackets and the work of Rhoades \cite{Rhoades} on crank moments. The $q$-bracket is defined for any function $f$ on partitions by a weighted average of $f$ over all partitions, i.e.
\begin{align*}
	\langle f \rangle_q := \dfrac{\sum_\lambda f(\lambda) q^{|\lambda|}}{\sum_\lambda q^{|\lambda|}} = \lp q;q \rp_\infty \sum_\lambda f(\lambda) q^{|\lambda|}.
\end{align*}
The quasimodularity of $q$-brackets has been studied by many authors, such as Bloch--Okounkov \cite{BlochOkounkov}, Zagier \cite{ZagierBracket}, Bachmann and van Ittersum \cite{
BachmannIttersum}, and van Ittersum \cite{Ittersum21, Ittersum23}. It turns out there are large algebras of functions on partitions, called symmetric and shifted symmetric functions, for which the $q$-bracket automatically yields quasimodular forms. These $q$-brackets are not only interesting devices for constructing quasimodular forms, but have important applications in representation theory (which indeed is the original context in which they arose \cite{BlochOkounkov}) and form an integral piece of current efforts to build a multiplicative theory of partitions \cite{Schneider17}. Thus, we emphasize in particular that these results on $C_a(q)$ not only introduce $q$-multiple zeta values to the study of cranks, but also naturally bring in the $q$-bracket. Using this language, we can conceptually explain the appearance of $P(q)$ in formulas relating to crank moments. We now state this result formally, using the work of Rhoades \cite[Theorem 2.1]{Rhoades} who made the results of Atkin and Garvan more explicit.\footnote{We also note the recent work of Amdeberhan, Griffin, Ono and Singh \cite{AGOS} who reproved and generalized Rhoades' result to the context of Jacobi forms with torsional divisor using their concept of partition Eisenstein series. We discuss their ideas in some detail in a later section.}

\begin{theorem}
	For $a \geq 1$, if we let $c_{2a}(\lambda) = \mathrm{crk}(\lambda)^{2a}$, then we have
	\begin{align*}
		  \langle c_{2a} \rangle_q = \sum_{1 \leq k \leq a} \sum_{\substack{i_1 + \dots + i_k = a \\ i_1, \dots, i_k > 0}} \dfrac{2^k \lp 2a \rp!}{k! \lp 2i_1 \rp! \cdots \lp 2i_k \rp!} \mathcal{U}_{(2i_1-1)}(q) \cdots \mathcal{U}_{(2i_k-1)}(q).
    \end{align*}
    Thus, $\langle c_{2a} \rangle_q$ is a mixed weight quasimodular form of highest weight $2a$, and $\langle c_{2a} \rangle_q$ is a $\Q$-linear combination of elements of $\mathcal{MD}_q(1)$ with highest weight $2a$ and highest depth $a$.
\end{theorem}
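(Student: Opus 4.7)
The plan is to prove the explicit formula by directly computing the generating function of crank moments via a logarithmic expansion, and then to deduce quasimodularity and the $\mathcal{MD}_q(1)$ membership from the known structure of $q$-multiple divisor sums of level one. The main observation is that $\langle c_{2a}\rangle_q = (q;q)_\infty\, C_{2a}(q) = C_{2a}(q)/P(q)$, so the task reduces to giving a clean formula for $C_{2a}(q)/P(q)$.

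First I would set $z = e^u$, noting that $D_z = z\,d/dz = d/du$, so $C_{2a}(q) = (d^{2a}/du^{2a})\,C(e^u,q)\vert_{u=0}$. Writing the infinite product
\[
\phi(e^u,q) := \frac{C(e^u,q)}{P(q)} = (q;q)_\infty^2 \prod_{m\geq 1} \frac{1}{(1-e^u q^m)(1-e^{-u}q^m)},
\]
I would take logarithms, expand $-\log(1-x) = \sum_{n\geq 1} x^n/n$, interchange summations, and collect $\sum_{m\geq 1} q^{mn} = q^n/(1-q^n)$. The $e^{nu}+e^{-nu}$ that appears becomes $2\cosh(nu) = 2\sum_{i\geq 0} (nu)^{2i}/(2i)!$, and the $i=0$ contribution cancels $(q;q)_\infty^{-2}$, leaving
\[
\log \phi(e^u,q) \;=\; L(u) \;:=\; 2\sum_{i\geq 1} \frac{u^{2i}}{(2i)!}\,\mathcal{U}_{(2i-1)}(q),
\]
because $\sum_{n\geq 1} n^{2i-1} q^n/(1-q^n) = \mathcal{U}_{(2i-1)}(q)$.

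Exponentiating, $\phi(e^u,q) = \sum_{k\geq 0} L(u)^k/k!$, and expanding the $k$th power multinomially yields, after multiplying by $P(q)$ and extracting the coefficient of $u^{2a}/(2a)!$,
\[
C_{2a}(q) \;=\; P(q)\sum_{k=1}^{a}\frac{(2a)!\,2^k}{k!}\sum_{\substack{i_1+\cdots+i_k=a\\ i_j\geq 1}}\frac{\mathcal{U}_{(2i_1-1)}(q)\cdots\mathcal{U}_{(2i_k-1)}(q)}{(2i_1)!\cdots(2i_k)!},
\]
and dividing by $P(q)$ gives the stated identity for $\langle c_{2a}\rangle_q$.

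The quasimodularity claims then follow from general facts already in hand. Each $\mathcal{U}_{(2i-1)}(q)$ is (up to additive and multiplicative constants) the Eisenstein series $E_{2i}$, hence quasimodular of pure weight $2i$; since $i_1+\cdots+i_k=a$, each monomial $\mathcal{U}_{(2i_1-1)}\cdots\mathcal{U}_{(2i_k-1)}$ is a quasimodular form of weight exactly $2a$, so $\langle c_{2a}\rangle_q$ is a mixed weight quasimodular form of highest weight $2a$. Finally, to rewrite the answer inside $\mathcal{MD}_q(1)$, I would apply the quasi-shuffle product formula of Theorem \ref{T: Quasishuffle structure} inductively to each $k$-fold product of depth-one divisor sums: by that theorem, such a product produces a $\Q$-linear combination of elements of $\mathcal{MD}_q(1)$ whose highest weight equals the sum of the input weights (here $2a$) and whose highest depth equals $k$. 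Taking the maximum over $1\leq k\leq a$ gives highest depth $a$, as claimed. The only nontrivial step is the logarithmic expansion and the identification of $L(u)$; everything else is bookkeeping with the quasi-shuffle product.
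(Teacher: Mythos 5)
Your proposal is correct, but it takes a more self-contained route than the paper. The paper's proof is essentially a two-line citation: the explicit formula is taken directly from Rhoades' Theorem 2.1, and the quasimodularity/depth statements then follow from the quasi-shuffle structure (Theorem \ref{T: Quasi-shuffle and q-MZV}), exactly as in your last paragraph. You instead rederive Rhoades' formula from scratch by writing $\langle c_{2a}\rangle_q = (q;q)_\infty C_{2a}(q)$, setting $z=e^u$ so that $D_z^{2a}$ becomes $d^{2a}/du^{2a}$, and computing $\log\bigl((q;q)_\infty^2/((e^uq;q)_\infty(e^{-u}q;q)_\infty)\bigr) = 2\sum_{i\geq 1}\frac{u^{2i}}{(2i)!}\mathcal{U}_{(2i-1)}(q)$, after which exponentiation and extraction of the coefficient of $u^{2a}/(2a)!$ gives exactly the stated sum over compositions; I checked the cancellation of the $i=0$ cosh term against $2\log(q;q)_\infty$ and the multinomial bookkeeping, and they are right. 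What your route buys is transparency: it makes visible why only the odd-index divisor sums (i.e. even-weight Eisenstein series) appear and why the $q$-bracket normalization removes the factor $P(q)$, whereas the paper's route is shorter by outsourcing the computation. Two small points of precision, neither a gap: the identification $\sum_\lambda \mathrm{crk}(\lambda)^{2a}q^{|\lambda|}=C_{2a}(q)$ uses the paper's footnote convention adjusting $M(m,1)$ at $n=1$, which you implicitly adopt by working with the infinite product; and each monomial $\mathcal{U}_{(2i_1-1)}\cdots\mathcal{U}_{(2i_k-1)}$ is a \emph{mixed} weight quasimodular form of highest weight $2a$ rather than pure weight $2a$, since $\mathcal{U}_{(2i-1)}=-\tfrac{B_{2i}}{4i}\lp E_{2i}-1\rp$ carries a constant term --- but this is exactly what the conclusion "highest weight $2a$" requires, so your final statements stand.
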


The proof follows immediately from \cite[Theorem 2.1]{Rhoades} along with the quasi-shuffle structure given in Theorem \ref{T: Quasi-shuffle and q-MZV}. These moments of ranks and cranks have received extensive study from many angles \cite{Andrews07}, and this new connection between crank moment formulas and $q$-multiple divisor sums deserves detailed attention. In particular, a natural questions arises from the results of Atkin and Garvan.

\begin{question} \label{Q: Ranks}
	Do the generating functions for rank moments belong to some algebra $\mathcal{MD}_q(N)$?
\end{question}

After Atkin and Garvan, much work has also been done for moments of cranks in arithmetic progressions, which require the study of twisted crank moments
\begin{align*}
	M_k\lp \zeta; n \rp := \sum_{m \in \Z} \zeta^m m^k M(m,n),
\end{align*}
These problems have been considered for example by \cite{AndrewsLewis,CKL}. Consider the generating function
\begin{align*}
	C_a\lp \zeta; q \rp := \sum_{n \geq 0} M_k\lp \zeta; n \rp q^n.
\end{align*}
As early as Rhoades \cite[Theorem 3.1]{Rhoades} gives a formula for $C_{2a}(-1;q)$ which, in light of Theorem \ref{T: Main Theorem, Cyclotomic} informs us that $C_{2a}(-1;q)$ is essentially a $q$-multiple zeta value of level 2. It would be interesting to understand Atkin--Garvan theory for congruence classes in the context of $q$-multiple zeta values and quasimodular forms of higher levels.

As a closing note on the collected literature on Atkin--Garvan crank moments, we observe that by \cite[Theorem 6.2]{AtkinGarvan} and Theorem \ref{T: YuanZhao Closure}, a positive answer to Question \ref{Q: Ranks} would imply that the half-integral weight modular form $\eta^{23}$ belongs to the algebra of $q$-multiple zeta values. If then Question \ref{Q: Ranks} is answered in the affirmative, it would forge the beginnings of a theory which could unify half-integral and integral weight modular forms under a single algebraic framework. If Question \ref{Q: Ranks} is answered in the negative, this would provide a new conceptual distinction between the rank and crank, as well as between integral and half-integral weight modular forms. We thus suggest Question \ref{Q: Ranks} as a crucial one.

\section{Prime-detecting $q$-multiple zeta values} \label{S: Primes}

Following the ideas of Schneider \cite{SchneiderThesis} mentioned in the introduction, there has been a growing literature exploring intersections with partition theory and multiplicative number theory; see for instance \cite{Schneider17,Schneider16,Densities1,Densities2}. Building further in this direction, the author, van Ittersum and Ono \cite{CraigIttersumOno} have demonstrated that there within the algebra of quasimodular forms of level one, there is an infinite-dimensional vector subspace of forms whose Fourier coefficients vanish exactly at the the constant term, linear term, and at $q^p$ for $p$ prime. In fact, the authors characterize exactly which of these forms can be constructed using linear combinations of derivatives of Eisenstein series, which conjecturally characterizes the whole space. Because of the connection established by Bachmann and K\"{u}hn between quasi-shuffle algebras, partitions, and $q$-multiple divisor sums, these theorems demonstrate an effective method to use partitions to detect primality of an integer. Thus, the goal of Schneider's thesis of connecting foundational concepts in partition theory and multiplicative number theory can be achieved. Gomez \cite{Gomez} has also shown how twists of these level one Eisenstein series produce prime-detecting partition-theoretic functions and even to detect cubes of primes.

We now will extend this theory using $q$-multiple zeta values of level $N$. In particular, we will show how to use forms of level $N$ to detect primes in arithmetic progressions, which is the natural next stage in the exploration of the interface between partitions, $q$-multiple zeta values, and multiplicative number theory. We first define what we mean by a prime-detecting quasimodular form. We call a $q$-series
\begin{align*}
    f(q) = \sum_{n \geq 0} a_f(n) q^n
\end{align*}
a {\it prime-detecting series of type} $(S,N)$ if the coefficients $a_f(n)$ are integers and, for $n \geq 2$, vanish if and only if $n \in S \pmod{N}$ and $n$ is prime.

We begin our search for such $q$-series with certain special $q$-multiple zeta values; for $k \geq 0$, $N \geq 1$, and $S \subset \{ 0, 1, \dots, N-1 \} \bmod{N}$, we consider the forms given by
\begin{align*}
    \widehat{\mathcal{U}}_{(k),S,N}(q) := \sum_{j \not \in S} \mathcal{U}_{(k),(j)}(q) = \sum_{n > 0} \sum_{\substack{d|n \\ d \not \in S \bmod{N}}} d^k q^n,
\end{align*}
which arise as combinations of special cases of the \eqref{Eq: Integral forms}. We extend the ideas of Gomez \cite{Gomez} (whose obtains results in terms of quasimodular forms with level $N^2$) with an emphasis how the quasimodularity at level $N$ of the original prime-detecting forms in \cite{CraigIttersumOno} can be regained by detecting primes in complementary pairs of arithmetic progressions. Our results here are thus natural complements to the results of Gomez on primes in arithmetic progressions.

Using the same style of argument as \cite[Lemma 2]{Lelievre},  \cite[Lemma 2.1]{CraigIttersumOno}, or \cite{Gomez} with a little additional elementary number theory, we obtain infinite families of $q$-multiple zeta values of level $N$ which are prime-detecting of type $(c,N)$. To state this result, and the natural quasimodular connections which follow from our approach, we recall the forms $f_{k,\ell}$ from \cite{CraigIttersumOno} defined by
\begin{align*}
    f_{k,\ell} := \lp D^\ell + 1 \rp \mathcal{U}_{(k)} - \lp D^k + 1 \rp \mathcal{U}_{(\ell)}.
\end{align*}

\begin{proposition} \label{L: Partial prime detection}
    Let non-negative integers $k,\ell$ with $\ell > k$, and a modulus $N \geq 3$ be given. For this $N$, consider the modified differential operator $D_N := N D$ (which takes $q^{n/N} \mapsto n q^{n/N}$). For $c \bmod{N}$, define the $q$-multiple zeta values
    \begin{align*}
        f_{k,\ell,c,N}(q^{1/N}) := \begin{cases}
            \lp D_N^\ell + 1 \rp \widehat{\mathcal{U}}_{(k),\{ c \},N}(q^{1/N}) - \lp D_N^k + 1 \rp \widehat{\mathcal{U}}_{(\ell),\{ c \},N}(q^{1/N}) & \text{if } c \not \equiv 1 \bmod{N}, \\
            \lp D_N^\ell + 1 \rp \widehat{\mathcal{U}}_{(k),1,N}(q^{1/N}) - \lp D_N^k + 1 \rp \widehat{\mathcal{U}}_{(\ell),1,N}(q^{1/N}) + f_{k,\ell}(q^{1/N}) & \text{if } c \equiv 1 \bmod{N}.
        \end{cases}
    \end{align*}
    and let $f_{k,\ell,c,N}$ be defined by linearity in the subscript. Let $m \geq 0$ be given. Then the following are true:
    \begin{enumerate}
        \item $D_N^m f_{k,\ell,S,N}(q^{1/N})$ is a prime-detecting $q$-series of type $(c,N)$.
        \item If $S$ is symmetric (i.e. $S = -S \bmod{N}$) and $k, \ell$ are odd, then $D_N^m f_{k,\ell,S,N}(q^{1/N})$ is a prime-detecting quasimodular form of type $(S,N)$.
        \item If $k \equiv \ell \bmod{2}$, then each
    \begin{align*}
        D_N^m \lp f^{\mathrm{sym}}_{k,\ell,S,N}(q^{1/N}) \rp := D_N^m\lp f_{k,\ell,S,N}(q^{1/N}) + (-1)^{k+1} f_{k,\ell,-S,N}(q^{1/N}) \rp
    \end{align*}
    is a prime-detecting quasimodular form of type $\lp S, N \rp$.
    \end{enumerate} 
\end{proposition}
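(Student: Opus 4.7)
The plan is to treat the three claims via a combination of direct Fourier coefficient computations (for the prime-detection property) and a reduction of the divisor-restricted forms $\widehat{\mathcal{U}}_{(k),S,N}$ to the quasimodular traceforms of Theorem \ref{T: Main Theorem, Symmetry} through root-of-unity orthogonality.

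First I would address part (1). Since $D_N$ multiplies $q^{n/N}$ by $n$, and writing $\sigma_k^{\neq c}(n) := \sum_{d \mid n,\, d \not\equiv c \bmod N} d^k$, the coefficient of $q^{n/N}$ in $D_N^m f_{k,\ell,c,N}(q^{1/N})$ for $c \not\equiv 1 \bmod N$ reduces to
\[
n^m\bigl[(n^\ell+1)\sigma_k^{\neq c}(n) - (n^k+1)\sigma_\ell^{\neq c}(n)\bigr],
\]
with an additional term involving the unrestricted divisor sums when $c \equiv 1$. For a prime $p$ the only divisors are $1$ and $p$, so a case analysis based on the relation of $p$ and $c$ modulo $N$ produces either the exact cancellation underlying \cite[Lemma 2]{Lelievre} and \cite[Lemma 2.1]{CraigIttersumOno} or a clean residual proportional to $p^\ell - p^k$. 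The role of the $f_{k,\ell}$ correction when $c \equiv 1$ is precisely to repair the cancellation that is otherwise lost because the divisor $1$ is then excluded from the restricted sum. For composite $n$, the diagonal divisors $d = 1$ and $d = n$ cancel pairwise and the remaining proper divisors group with their complements $n/d$, each such pair contributing with a definite sign; this is essentially the method of \cite[Lemma 2]{Lelievre}. Extending by linearity in the subscript then settles (1); this composite-$n$ non-vanishing is the main technical obstacle, being a purely number-theoretic statement analogous to Lelievre's lemma, while the remaining arguments are essentially formal.

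For parts (2) and (3), I would reduce the divisor-restricted forms to twisted $q$-multiple divisor sums via orthogonality of $N$th roots of unity. Inverting the indicator of the residue class $d \equiv j \bmod N$ using $\zeta_N$-sums gives the identity
\[
\widehat{\mathcal{U}}_{(k),S,N}(q) = \frac{N-|S|}{N}\,\mathcal{U}_{(k),(0)}(q) - \frac{1}{N}\sum_{b=1}^{N-1}\Bigl(\sum_{j\in S}\zeta_N^{-bj}\Bigr)\mathcal{U}_{(k),(b)}(q).
\]
For part (2), when $S = -S$ and $k$ is odd, the inner coefficient is invariant under $b \mapsto N-b$, and pairing these two indices regroups the sum as a $\Q$-linear combination of the traceforms $\mathcal{U}^{\mathrm{sym}}_{(k);(b)}(q^{1/N})$ from \eqref{Eq: Simple quasimodular case}, together with $\mathcal{U}_{(k),(0)}(q^{1/N})$. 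Both are quasimodular of level $N$ by Theorem \ref{T: Main Theorem, Symmetry}(1) and Proposition \ref{P: Expansion of Eisenstein series}. Closure under $D_N$ (via the natural analogue of Lemma \ref{L: Derivative Level 1} implicit in Theorem \ref{T: YuanZhao Closure}) and the quasimodularity of $f_{k,\ell}(q^{1/N})$ for $k,\ell$ odd then prove (2). For part (3), the analogous identity
\[
\widehat{\mathcal{U}}_{(k),S,N}(q) - \widehat{\mathcal{U}}_{(k),-S,N}(q) = -\frac{1}{N}\sum_{j \in S}\sum_{b=1}^{N-1}\zeta_N^{-bj}\,\mathcal{U}^{\mathrm{sym}}_{(k);(b)}(q)
\]
holds for $k$ even using $\mathcal{U}^{\mathrm{sym}}_{(k);(b)} = \mathcal{U}_{(k);(b)} - \mathcal{U}_{(k);(-b)}$ in that parity. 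The sign $(-1)^{k+1}$ in the definition of $f^{\mathrm{sym}}_{k,\ell,S,N}$ is precisely what is needed so that the symmetrization becomes a $\Q(\zeta_N)$-linear combination of traceforms regardless of the parity of $k$; quasimodularity follows exactly as in (2), and the prime-detection property is inherited termwise from part (1).
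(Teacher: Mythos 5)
Your proposal takes essentially the same route as the paper: the paper's proof consists of deferring the prime-detection claims to \cite[Lemma 2.1]{CraigIttersumOno} (and \cite{Lelievre}) and then justifying quasimodularity of the symmetrized pieces via Theorem \ref{T: YuanZhao Closure} together with the fact, from Theorem \ref{T: Main Theorem, Symmetry}, that $\mathcal{U}_{(k);(b)}(q^{1/N}) + (-1)^{k+1}\mathcal{U}_{(k);(-b)}(q^{1/N})$ is quasimodular of level $N$ --- exactly the traceforms your root-of-unity decomposition of $\widehat{\mathcal{U}}_{(k),S,N}$ produces, and your parity bookkeeping in (2) and (3) matches what the paper leaves implicit. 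The one caveat concerns part (1): your pairing heuristic for composite $n$ (grouping a divisor $d$ with its complement $n/d$) does not transfer verbatim to congruence-restricted divisor sums, since the complement of an admissible divisor need not be admissible; the paper supplies no more detail here either, simply asserting that the argument goes exactly as in \cite[Lemma 2.1]{CraigIttersumOno}, so on this point your write-up is no less complete than the paper's own proof.
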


\begin{proof}
    The proofs go exactly as in \cite[Lemma 2.1]{CraigIttersumOno}, except that we need to justify the quasimodularity of $f^{\mathrm{sym}}_{k,\ell,c,N}$. This follows from Theorem \ref{T: YuanZhao Closure} and the fact, from Theorem \ref{T: Main Theorem, Symmetry}, that
    \begin{align*}
        \mathcal{U}_{(k);(b)}(q^{1/N}) + (-1)^{k+1} \mathcal{U}_{(k);(-b)}(q^{1/N})
    \end{align*}
    is a quasimodular form of level $N$.
\end{proof}

Thus, there are large vector spaces of $q$-multiple zeta values of level $N$ that detect primes in arithmetic progressions modulo $N$, and also of quasimodular $q$-multiple zeta values of level $N$ detecting pairs of progressions.
We close the section with the following open question, which is the next natural generalization of the question resolved by our work and that of Gomez:

\begin{question}
    Let $K/\Q$ be a finite Galois extension. Is it possible to construct, using $q$-multiple zeta values or generalizations thereof, a $q$-series whose coefficients detect primes fixed by the Frobenius element of the Galois group $\mathrm{Gal}\lp K/\Q \rp$? If so, which elements generate these spaces? If so, can we impose the additional condition that these $q$-series be modular objects of some kind?
\end{question}

There is already strong evidence of many other sets related to primes being detectable by partitions. For example, Gomez \cite{Gomez} provides a mechanism for detecting cubes of primes by partitions. We thus also leave the following open question generalizing the ideas of Gomez.

\begin{question}
    For any integer $k \geq 1$, can one create partition-theoretic series whose Fourier coefficients detect $k$th powers of primes? If so, are these relatable to $q$-multiple zeta values and/or quasimodular forms?
\end{question}

\section{Ramanujan's $\tau$-function} \label{S: Ramanujan Tau}

We now consider the potential for applications to some classical problems in modular forms; we choose as a prime example Lehmer's conjecture. Recall the famous Ramanujan $\Delta$-function, and its coefficients the Ramanujan $\tau$-function, defined by
\begin{align*}
    \Delta(z) := \sum_{n \geq 1} \tau(n) q^n := q \prod_{n \geq 1} \lp 1 - q^n \rp^{24}.
\end{align*}
This is the unique normalized cusp form of weight 12. It is easy to see, using the uniqueness of $\Delta$ and the simple identities $E_4^2(z) = E_8(z)$ and $E_k(z) = 1 - \frac{2k}{B_k} \mathcal{U}_{(k-1)}(q)$ that
\begin{align*}
    \Delta(z) &= \dfrac{E_4^3(z) - E_6^2(z)}{1728} = \dfrac{E_4(z) E_8(z) - E_6^2(z)}{1728} \\ &= \dfrac{200}{3} \mathcal{U}_{(3)}(q) \mathcal{U}_{(7)}(q) - 147 \mathcal{U}_{(5)}(q)^2 + \dfrac{5}{18} \mathcal{U}_{(7)}(q) + \dfrac{7}{12} \mathcal{U}_{(5)}(q) + \dfrac{5}{36} \mathcal{U}_{(3)}(q).
\end{align*}
Therefore, using \eqref{Eq: Product formula level one} we may obtain
\begin{align*}
    \Delta(z) &= \dfrac{200}{3} \mathcal{U}_{(7,3)}^{\mathrm{sym}}(q) - 147 \mathcal{U}_{(5,5)}^{\mathrm{sym}}(q) - \dfrac{1}{396} \mathcal{U}_{(11)}(q) + \dfrac{5}{6} \mathcal{U}_{(7)}(q) - \dfrac{137}{36} \mathcal{U}_{(5)}(q) - \dfrac{19}{9} \mathcal{U}_{(3)}(q) + \dfrac{1205}{198} \mathcal{U}_{(1)}(q).
\end{align*}
We note also the work of \cite{Tasaka}, who computed expressions for eigenforms at level one using double Eisenstein series, wherein formulas analogous to this can be found for many forms. We observe that such exact formulas with generalizations of divisor sums suggest connections with the famous Galois congruences exist modulo powers of 2, 3, 5, 7, 23, and 691. This new formula suggests a connection between the arithmetic of $\tau(n)$ and of multiple divisor sums.

Proceeding forward to the stated purpose of the section, we might consider representations of this form in terms of the size of the $\tau$-function. Using the notation $M^{\mathrm{sym}}_{\vec{k}}(n)$ for the coefficients of $\mathcal{U}^{\mathrm{sym}}_{\vec{k}}(q)$, we then obtain the following result immediately from our quasi-shuffle calculations.

\begin{corollary} \label{C: Lehmer criterion}
    Lehmer's conjecture is true if and only if for all primes $p$, we have
    \begin{align*}
        &\dfrac{200}{3} M_{(7,3)}^{\mathrm{sym}}(p) - 147 M_{(5,5)}^{\mathrm{sym}}(p) \not = \dfrac{1}{396} \sigma_{11}(p) - \dfrac{5}{6} \sigma_{7}(p) + \dfrac{137}{36} \sigma_{5}(p) + \dfrac{19}{9} \sigma_{3}(p) - \dfrac{1205}{198} \sigma(p). 
    \end{align*}
    In other words, if we define the polynomial
    \begin{align*}
        Q(x) := \dfrac{1}{396} x^{11} - \dfrac{5}{6} x^7 + \dfrac{137}{36} x^5 + \dfrac{19}{9} x^3 - \dfrac{1205}{198} x - 1,
    \end{align*}
    then Lehmer's conjecture is true if and only if
    \begin{align*}
        \dfrac{200}{3} M_{(7,3)}^{\mathrm{sym}}(p) - 147 M_{(5,5)}^{\mathrm{sym}}(p) \not = Q(p)
    \end{align*}
    for any prime $p$.
\end{corollary}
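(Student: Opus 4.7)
The plan is to extract the $p$-th Fourier coefficient from both sides of the identity
\begin{align*}
    \Delta(z) = \tfrac{200}{3} \mathcal{U}_{(7,3)}^{\mathrm{sym}}(q) - 147\, \mathcal{U}_{(5,5)}^{\mathrm{sym}}(q) - \tfrac{1}{396} \mathcal{U}_{(11)}(q) + \tfrac{5}{6} \mathcal{U}_{(7)}(q) - \tfrac{137}{36} \mathcal{U}_{(5)}(q) - \tfrac{19}{9} \mathcal{U}_{(3)}(q) + \tfrac{1205}{198} \mathcal{U}_{(1)}(q)
\end{align*}
derived immediately above the statement (which in turn came from $\Delta = (E_4^3 - E_6^2)/1728$ after rewriting $E_4 E_6^2 = E_4 E_8$ in the second term and applying the quasi-shuffle product formula \eqref{Eq: Product formula level one} to the two Eisenstein products $\mathcal{U}_{(3)}\mathcal{U}_{(7)}$ and $\mathcal{U}_{(5)}^2$). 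Since the coefficient of $q^p$ in $\Delta$ is $\tau(p)$, and by definition the coefficient of $q^p$ in $\mathcal{U}^{\mathrm{sym}}_{\vec{k}}(q)$ is $M^{\mathrm{sym}}_{\vec{k}}(p)$ and the coefficient in $\mathcal{U}_{(k)}(q)$ is $\sigma_k(p)$, comparing coefficients of $q^p$ gives an exact formula for $\tau(p)$ in terms of $M^{\mathrm{sym}}_{(7,3)}(p)$, $M^{\mathrm{sym}}_{(5,5)}(p)$, and the divisor sums $\sigma_k(p)$.

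The next step is the specialization to primes. For $p$ prime and $k \geq 1$, the classical identity $\sigma_k(p) = 1 + p^k$ converts the divisor-sum contributions to a polynomial expression in $p$. Collecting the constant terms of the coefficients yields
\begin{align*}
    \tfrac{1}{396} - \tfrac{5}{6} + \tfrac{137}{36} + \tfrac{19}{9} - \tfrac{1205}{198} = \tfrac{1 - 330 + 1507 + 836 - 2410}{396} = -1,
\end{align*}
which explains the $-1$ in the definition of $Q(x)$. After moving all divisor-sum terms to one side, Lehmer's assertion $\tau(p) \neq 0$ becomes exactly the claimed inequality
\begin{align*}
    \tfrac{200}{3} M^{\mathrm{sym}}_{(7,3)}(p) - 147\, M^{\mathrm{sym}}_{(5,5)}(p) \neq Q(p),
\end{align*}
and conversely if this inequality holds for every prime then $\tau(p) \neq 0$ for every prime.

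Since every step is either the preceding derivation (which is already carried out) or elementary coefficient extraction, there is no substantive obstacle; the only care required is in the arithmetic of collecting rational constants, and in tracking the sign change that occurs when the three divisor-sum terms (with weights $11, 7, 5, 3, 1$) are moved from the right-hand side of the formula for $\tau(p)$ to the right-hand side of the claimed criterion. Writing $Q(x)$ as defined in the statement simply packages this computation, and the corollary follows.
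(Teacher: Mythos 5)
Your proposal is correct and is essentially the paper's own (implicit) argument: the corollary follows immediately by extracting the coefficient of $q^p$ from the displayed identity for $\Delta$, using $\sigma_k(p) = 1 + p^k$ for $p$ prime, and checking that the constant terms sum to $-1$, exactly as you computed. The only slip is cosmetic and outside the actual proof: the rewriting in the paper's derivation is $E_4^3 = E_4 E_8$ (from $E_4^2 = E_8$), not $E_4 E_6^2 = E_4 E_8$, but this does not affect your coefficient-extraction argument.
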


\section{Quadratic forms} \label{S: Quadratic forms}

In the previous proofs, the constructive approaches have focused on $\Gamma(N)$ for $N \geq 3$, with the case of $\Gamma(2)$ following by trace arguments. We now give a brief discussion of some calculations with $q$-multiple zeta values in levels 2 and 4 with connections to quadratic forms. We note that a similar treatment of quasimodular forms of level two could be given in terms of the two families of Eisenstein series $F_k^{(1)}(z)$ and $F_k^{(2)}(z)$ treated by Kaneko and Zagier in \cite{KanekoZagier}, which have explicit relations to $q$-multiple zeta values via divisor sums. We focus instead on theta functions since this connection is less clear a priori.

To begin, we characterize the algebra of modular forms of $\Gamma(2)$; see \cite[Chapter 1]{123} for details

\begin{lemma} \label{L: Level 2 generation}
    Define the modular forms
    \begin{align*}
        x(z) = \lp \sum_{n \in \Z} (-1)^n q^{n^2/2} \rp^4, \ \ \ y(z) = \lp \sum_{n \in \Z + \frac 12} q^{n^2/2} \rp^4.
    \end{align*}
    Then $\cM\lp \Gamma(2) \rp = \C[x(z),y(z)]$.
\end{lemma}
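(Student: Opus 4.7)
The plan is to follow the classical strategy outlined in \cite{123}: establish the modularity of $x(z)$ and $y(z)$, compute the dimensions of the graded pieces of $\cM(\Gamma(2))$, match them against the number of monomials in $x$ and $y$, and conclude by a containment-plus-dimension argument.

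First I would identify $x$ and $y$ with (suitably normalized) fourth powers of classical Jacobi theta constants, namely $x(z) = \theta_4(0;z)^4$ and $y(z) = \theta_2(0;z)^4$. The transformation behavior of $\theta_2, \theta_3, \theta_4$ under the generators of $\Gamma(2)$ (for instance $T^2$ and $ST^2S$) is classical; although individual theta constants pick up eighth roots of unity under $\mathrm{SL}_2(\Z)$, their fourth powers are invariant under $\Gamma(2)$ and transform as weight-2 modular forms. Holomorphy at the three cusps $\infty, 0, 1$ follows from the explicit product formulas for the $\theta_j$. Hence $x, y \in \cM_2(\Gamma(2))$.

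Second, I would compute $\dim \cM_{k}(\Gamma(2))$ for all $k$. Since $-I \equiv I \pmod 2$, we have $-I \in \Gamma(2)$, which forces $\cM_{2k+1}(\Gamma(2)) = 0$. For even weights, I would apply the standard dimension formula on the modular curve $X(2)$, which has genus $0$, three cusps, and no elliptic points of order $>1$; this gives $\dim \cM_{2k}(\Gamma(2)) = k+1$ for every $k \geq 0$.

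Third, I would check that the $(k+1)$ monomials $x^i y^{k-i}$, $0 \leq i \leq k$, are linearly independent in $\cM_{2k}(\Gamma(2))$. Reading off the leading Fourier coefficients at $\infty$, one finds $x(z) = 1 + O(q^{1/2})$ and $y(z) = 16\, q^{1/2} + O(q)$, so $x^i y^{k-i}$ begins with a nonzero multiple of $q^{(k-i)/2}$. Since these leading orders are distinct for distinct $i$, linear independence over $\C$ follows immediately.

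Combining the three steps: $\C[x,y]$ is graded by weight, its weight-$2k$ piece has dimension at least $k+1$ (from the independent monomials) and is contained in $\cM_{2k}(\Gamma(2))$, which has dimension exactly $k+1$; the odd-weight components agree trivially (both zero). Therefore $\C[x,y] = \cM(\Gamma(2))$ as graded algebras, and in particular $x$ and $y$ are algebraically independent. The main obstacle I expect is a clean justification of the modular transformation of $\theta_2^4$ and $\theta_4^4$ on all of $\Gamma(2)$ (as opposed to just the translation subgroup), which can be handled either via the explicit action on generators or by appealing directly to the classical results in \cite{123}.
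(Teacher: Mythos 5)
Your proposal is correct. Note that the paper does not prove this lemma at all: it is quoted as a classical fact with a pointer to \cite[Chapter 1]{123}, and the argument you give (identify $x=\theta_4^4$, $y=\theta_2^4$, check weight-$2$ modularity on generators of $\Gamma(2)$ and holomorphy at the three cusps, compute $\dim \cM_{2k}(\Gamma(2))=k+1$ on the genus-zero curve $X(2)$ with no elliptic points, and match against the $k+1$ monomials $x^iy^{k-i}$, which are independent because their leading $q$-orders $(k-i)/2$ are distinct) is precisely the standard proof found in that reference. The only points worth stating explicitly in a write-up are the weight-$2$ case of the dimension formula, which is the separate formula $\dim\cM_2 = g+\varepsilon_\infty-1 = 2$ but agrees with $k+1$, and the observation that since both generators have weight $2$, the weight-$2k$ graded piece of $\C[x,y]$ is spanned exactly by the degree-$k$ monomials, so the containment-plus-dimension count closes the argument.
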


Since we already know how to produce $E_2$ using a $q$-multiple zeta value of level one (which of course is also of level 2), it follows from Proposition \ref{P: Polynomial in E_2} and Theorem \ref{T: YuanZhao Closure} that in order to produce $\cM\lp \Gamma(2) \rp$ as a subset of $\mathcal{MD}_q(2)$, it would suffice to produce $x(z)$ and $y(z)$ as elements of $\mathcal{MD}_q(2)$. This can also be further simplified by consideration of Jacobi theta functions. In particular, recall Jacobi's theta function
\begin{align*}
    \theta^4(z) = \sum_{n \in \Z} q^{n^2/2}.
\end{align*}
It is a classical result of Jacobi that $\theta^4(z) = y(z) - x(z)$, and so we need only prove that two out of three functions among $\theta^4, x$, and $y$ belong to $\mathcal{MD}_q(2)$.

We begin with an analysis of $\theta^4(z)$ via a slightly different normalization. It is a classical fact that
\begin{align*}
    \theta^4(2z) = \sum_{n \geq 0} r_4(n) q^n \in \cM_2\lp \Gamma_0(4) \rp,
\end{align*}
where $r_4(n)$ represents the number of ways one can write $n$ as a sum of four squares. Using an explicit basis for such spaces, it is well-known that
\begin{align*}
    r_4(n) = 8 \sum_{\substack{d|n \\ d \not \equiv 0 \bmod{4}}} d.
\end{align*}
Now, observing that for any $n > 0$ we have
\begin{align*}
    \sum_{c=0}^3 \mathcal{U}_{(1);(c);4}(q) = 4 \sum_{n \geq 1} \lp \sum_{\substack{d|n \\ d \equiv 0 \bmod{4}}} d \rp q^n,
\end{align*}
we can conclude that
\begin{align*}
    \theta^4(2z) = 1 + 6 \mathcal{U}_{(1);(0);4}(q) - 2 \mathcal{U}_{(1);(1);4}(q) - 2 \mathcal{U}_{(1);(2);4}(q) - 2 \mathcal{U}_{(1);(3);4}(q).
\end{align*}
Thus, we realize $\theta^4$ as a simple linear combination of $q$-multiple zeta values at level four. Noting also the straightforward identities $\mathcal{U}_{(1);(1);4}(q) + \mathcal{U}_{(1):(3);4}(q) = 4 \mathcal{U}_{(1);(1);2}(q^2)$ and $\mathcal{U}_{(1);(2c);4}(q) = \mathcal{U}_{(1);(c);2}(q)$, we obtain
\begin{align*}
    \theta^4(2z) = 4 \mathcal{U}_{(1);(0);2}(q) - 8 \mathcal{U}_{(1);(1);2}(q^2).
\end{align*}
We have therefore realized $\theta^4(2z)$ in both level 2 and 4 using $q$-multiple zeta values. Noting that $x(2z)$ can be realized from $\theta(2z)$ by twisting $q \mapsto -q$, we can also realize $x(2z)^4$ in terms of $q$-multiple zeta values of level 2 after twisting $q \mapsto -q$, which is consistent with the prediction of Theorem \ref{T: Main Theorem, Cyclotomic}. Thus, all of $\cM\lp \Gamma(2) \rp$ is produced in this manner by $q$-multiple zeta values. It would also be of interest to produce such values directly in level 2 without moving first through level 4; we leave this direction open.

We can derive additional nontrivial identities for partition sums using $q$-multiple zeta values in level 4 along with the ``modular identity" $\theta^4$ as $\theta^2 \cdot \theta^2$ by expanding out quasi-shuffle products. To accomplish this, we consider Eisenstein series for $\Gamma_0(N)$ with character. For a positive integer $k$ and Dirichlet characters $\psi, \chi$ of conductors $L$ and $R$ satisfying $\psi(-1) \chi(-1) = (-1)^k$, we may construct the classical Eisenstein series, for $\lp k, \psi, \chi \rp \not = \lp 2, \varepsilon, \varepsilon \rp$ for $\varepsilon$ the trivial character,
\begin{align} \label{Eq: Gamma_0(N) Eisenstein series}
    E_{k,\chi,\psi}(z) := c_0 + \sum_{n \geq 1} \lp \sum_{d|n} \psi(d) \chi(n/d) d^{k-1} \rp q^n \in M_k\lp \Gamma_0(RL), \psi \chi \rp,
\end{align}
where
\begin{align*}
    c_0 = \begin{cases}
        0 & L>1 \\ - \dfrac{B_{k,\psi}}{2k} & L=1,
    \end{cases}
    \hspace{0.1in} \text{where} \hspace{0.1in}
    \sum_{n \geq 0} B_{n,\psi} \dfrac{t^n}{n!} := \sum_{a=1}^{R} \dfrac{\psi(a) t e^{at}}{e^{Rt}-1}.
\end{align*}
These can, of course, be built from the $\Gamma(N)$-modular forms defined earlier using traces as well.

Now, it is well known that $\theta^2$ is modular of weight 1 for $\Gamma_0(4)$ with Nebentypus $\chi_4$ the Legendre character modulo 4, which is defined by $\chi_4(m) = 1$ if $m \equiv 1 \bmod{4}$, $\chi_4(m) = -1$ if $m \equiv 3 \bmod{4}$, and $\chi_4(m) = 0$ otherwise. Because the space of modular forms satisfying this property is one-dimensional and contains the Eisenstein series 
\begin{align*}
    E_{1,\chi_4}(z) = 4 E_{1,\varepsilon,\chi_4}(z) = 1 + 4 \sum_{n \geq 1} \lp \sum_{d|n} \chi_4(d) \rp q^n,
\end{align*}
we see that $\theta^2 = E_{1,\chi_4}$ and $\theta^4 = E_{1,\chi_4}^2$. In order to apply Theorem \ref{T: Main Theorem, Cyclotomic}, we rewrite $E_{1,\chi_4}$ in terms of $q$-multiple zeta values. Noting that for any $j$ we have by orthogonality that
\begin{align*}
    \dfrac 14 \sum_{c=0}^3 i^{-cj} \mathcal{U}_{(0);(c);4}(q) = \sum_{n \geq 1} \sum_{\substack{m|n \\ m \equiv j \bmod{4}}} q^n.
\end{align*}
Thus, we have
\begin{align*}
    \theta^2(2z) = 1 + \sum_{c=0}^3 \lp i^{-c} - i^c \rp \mathcal{U}_{(0);(c);4}(q) = 1 + 2i \lp \mathcal{U}_{(0);(3);4}(q) - \mathcal{U}_{(0);(1);4}(q) \rp,
\end{align*}
which we note also corresponds to the fact that the number of representations as a sum of squares of $n$ is the number of divisors of $n$ which are $1 \pmod{4}$ minus the number which are $3 \pmod{4}$, and thus by squaring
\begin{align*}
    \theta^4(z) &= 1 + 4i \lp \mathcal{U}_{(0);(3);4}(q) - \mathcal{U}_{(0);(1);4}(q) \rp - 4 \lp \mathcal{U}_{(0);(3);4}(q) - \mathcal{U}_{(0);(1);4}(q) \rp^2 \\ &= 8 \mathcal{U}_{(0);(1);4}(q) \mathcal{U}_{(0);(3);4}(q) - 4 \mathcal{U}_{(0);(1);4}(q)^2 - 4 \mathcal{U}_{(0);(3);4}(q)^2 + 4i\lp \mathcal{U}_{(0);(3);4}(q) - \mathcal{U}_{(0);(1);4}(q) \rp.
\end{align*}
We can now work this out using the quasi-shuffle structure on level 4 $q$-multiple divisor sums using \eqref{Eq: Product formula}. We thus have
\begin{align*}
    \theta^4(z) &= 1 + 8 \mathcal{U}^{\mathrm{sym}}_{(0,0);(1,1);4}(q) - 4 \mathcal{U}^{\mathrm{sym}}_{(1);(1);4}(q) + 4i\lp \mathcal{U}_{(0);(3);4}(q) - \mathcal{U}_{(0);(1);4}(q) \rp
\end{align*}

Thus, we obtain an interesting and non-trivial relation between $q$-multiple zeta values from the simple fact that $\theta^4 = \theta^2 \theta^2$, which also descends to a corresponding identity between colored multiple zeta values by comparing the pole structures of each identity near $q=1$. It would be interesting to analyze identities between $q$-multiple zeta values in light of similar studies of identities between Eisenstein series \cite{KMR}.

\section{Open Questions} \label{S: Questions}

We discuss a number of open questions in this section regarding aspects of the relationship between $q$-multiple zeta values and aspects of the theory of modular forms to which our main results do not directly speak.

\subsection{Congruences for quasimodular forms}

One of the predominant themes in both partition theory and the theory of modular forms are arithmetic properties of Fourier coefficients, and especially of coefficients in arithmetic progressions. In this area of inquiry, one of the central tools is Sturm's theorem, which has been mentioned loosely several times in the paper and which reduces many problems to a finite calculation. The essence of Sturm's theorem asserts the existence of a nice basis $f_0, \dots, f_{d-1} \in \Z[[q]]$ of forms satisfying $f_j = q^j + O\lp q^d \rp$. Informally, this result says that a modular form for $\Gamma$ of weight $k$ is determined exactly by its first $d = \dim\lp \mathcal{M}_k\lp \Gamma, \mathbb{C} \rp \rp$ coefficients. Because these coefficients are integral in the relevant algebraic number field, this result also implies that all coefficients of the form are divisible by a given integer $M$ if and only if each of the first $d$ coefficients are. This result is immensely useful in the verification of congruences for coefficients of specific modular forms.

Through works such as \cite{AAT,AOS,BCIP}, there is growing evidence that $q$-multiple zeta values will possess an interesting theory of congruences; indeed, these papers establish using Serre's theorem of $p$-adic modular forms the existence of infinitely many such congruences for many $q$-multiple zeta values. However, the arguments do not use the Sturm method, because such a method has not been developed for quasimodular forms. In fact, preliminary computer calculations indicate that such a result should fail for all large weights. We therefore pose the following open question\footnote{The author first heard this problem suggested in a talk of Ono in the Michigan Tech Seminar in Partition Theory, $q$-Series and Related Topics, but has never seen this problem written down explicitly.}

\begin{question}
    Let $f$ be a (mixed weight) quasimodular form of largest weight $k$ for a congruence subgroup $\Gamma$. Can one explicitly produce an integer $d$ such that the first $d$ coefficients of $f$ determine $f$ entirely?
\end{question}

Because preliminary computations appear to predict that a suitable $\Z$-basis may not exist in large weights, we leave a second related question.

\begin{question}
    Can we utilize $q$-multiple zeta values at level $N$ to derive Sturm-type theorems for quasimodular forms at level $N$?
\end{question}

\subsection{Weakly holomorphic modular forms}

Let $p(n)$ be the function that counts the number of unrestricted partitions of $n$. It is noted in \cite[Remark 2.1]{BachmannKuhn} that if $(0)^k = (0,0,\dots,0) \in \Z^k$, then 
\begin{align*}
    \mathcal U_{(0^k)}(q) = \sum_{0 < n_1 < n_2 < \dots < n_a} \dfrac{q^{n_1 + n_2 + \dots + n_k}}{\lp 1 - q^{n_1} \rp \lp 1 - q^{n_2} \rp \cdots \lp 1 - q^{n_k} \rp}
\end{align*}
generates partitions into exactly $k$ part sizes, and therefore
\begin{align} \label{Eq: Partition sum identity}
    \sum_{n \geq 0} p(n) q^n = \sum_{k \geq 1} \mathcal{U}_{(0^k)}(q).
\end{align}
Such arguments fit very naturally in partition theory as presented in well known texts such as \cite{AndrewsBook} covering partitions. We also recall the generating function relation
\begin{align*}
    \sum_{n \geq 0} p(n) q^n = \dfrac{1}{\lp q;q \rp_\infty} = \dfrac{q^{1/24}}{\eta(q)},
\end{align*}
where $\lp a;q \rp_\infty := \prod_{n \geq 0} \lp 1 - aq^n \rp$ and $\eta(q) := q^{1/24} \prod_{n \geq 1} \lp 1 - q^n \rp$ is Dedekind's $\eta$-function, which is a famous example of a modular form of weight 1/2. Thus, the identity \eqref{Eq: Partition sum identity} realizes the weakly holomorphic modular form $\eta^{-1}(q)$ in terms of an infinite linear combination of $q$-multiple zeta values.

Further work has demonstrated that this is not an isolated phenomenon. The next step in this direction is due to Ono and Singh \cite{OnoSingh}, who in turn were pursuing ideas relating to approximations of weakly holomorphic modular forms by MacMahon-type series \cite{AOS,BCIP}. In particular, they showed in \cite[Theorem 1.1]{OnoSingh} that for the 3-colored partition function $p_3(n)$ (counting the number of partitions of $n$ into parts which may take one of three colors) and for each $k \geq 0$ that
\begin{align} \label{Eq: Weakly hol series 1}
    \sum_{n \geq 0} p_3(n) q^n = \dfrac{1}{\eta(q)^3} = q^{- \frac{k^2 + k}{2}} \sum_{m \geq k} \binom{2m+1}{m+k+1} \mathcal{U}_{(1^m)}(q),
\end{align}
where $(1^m)$ denotes a vector of all 1's of length $m$, and in \cite[Theorem 1.2]{OnoSingh} that
\begin{align} \label{Eq: Weakly hol series 2}
    \dfrac{1}{\lp q^2; q^2 \rp_\infty \lp q;q^2 \rp_\infty^2} = q^{-k^2} \sum_{m \geq k} \binom{2m}{m+k} \mathcal{U}_{(2^m);(1)}(q),
\end{align}
with residues taken modulo 2. In \cite{JPS}, formulas are derived which realize additional weakly holomorphic modular forms as products of infinite series of the type found in \eqref{Eq: Weakly hol series 1} and \eqref{Eq: Weakly hol series 2}, which by the quasi-shuffle property can also be realized as single series, and they also derive similar kinds of formulas for infinite products which are rational functions times weakly holomorphic modular forms. We thus pose the following question.

\begin{question}
    Which eta quotients can be computed by infinite series of shifted $q$-multiple zeta values? Which weakly holomorphic modular forms can be computed this way?
\end{question}

\subsection{Traces of partition Eisenstein series}

Recall the Atkin--Garvan crank moments
\begin{align*}
	C_j(q) = \sum_{n \geq 0} M_j(n) q^n = \sum_{n \geq 0} \lp \sum_{m \in \Z} m^j M(m,n) \rp q^n.
\end{align*}
It has been shown in Section \ref{S: Partition cranks} that the theory built around the Atkin--Garvan cranks fits very naturally into the $q$-multiple zeta framework. There is another framework of a similar style which has recently been introduced into which this theory fits; these are the {\it traces of partition Eisenstein series} introduced by Amdeberhan, Ono and Singh \cite{AGOS,AOS}. A single {\it partition Eisenstein series} is defined from a partition $\lambda$ (in multiplicative notation) by the rule
\begin{align*}
	\lambda = \lp 1^{m_1}, 2^{m_2}, \dots, k^{m_k} \rp \mapsto G_\lambda(z) = G_2(z)^{m_1} G_4(z)^{m_2} \cdots G_{2k}(z)^{m_k},
\end{align*}
which is a quasimodular form of weight $2|\lambda|$. The traces of such series are then defined by a function $\phi$ on partitions by the formula
\begin{align*}
	\mathrm{Tr}_n\lp \phi \rp := \sum_{\lambda \vdash n} \phi(\lambda) G_\lambda(z).
\end{align*}
In \cite{AGOS,AOS2}, the authors produce several examples of formulas for quasimodular forms expressed in this manner, including some from Ramanujan's lost notebook and these Atkin--Garvan crank moments. Because partition Eisenstein series naturally involve a structured by complicated convolution sum, it would be natural to ask whether by introducing the structure of quasi-shuffle products might in certain cases produce particularly elegant formulas for quasimodular forms. Such a pursuit would naturally require a resolution to the following question.

\begin{question}
    Can the quasi-shuffle structure of level one $q$-multiple zeta values be used to produce an elegant formula for the partition Eisenstein series $G_\lambda(z)$?
\end{question}

\subsection{Further questions}

As the intersection between the fields of $q$-multiple zeta values, partitions and quasimodular forms is extremely new, it ought to be beneficial to explore a number of new directions in order to fully understand and utilize these connections. These new directions, we are hopeful, will interface well with the already numerous directions which exist in the areas of multiple zeta values, including connections with algebraic geometry, combinatorics, and mathematical physics. In particular, the full algebra of $q$-multiple zeta values, although it has known quasi-shuffle structure, is not well-understood from a number-theoretic perspective; such an understanding could have potential implications, for example, in attempts to prove congruences for the Fourier coefficients of quasimodular forms as in \cite{AOS} or in many other number-theoretic problems.

We therefore leave several additional open-ended questions which we hope will lead to more beautiful mathematics.

\begin{question}
    Do other natural families of $q$-series that appear often in number theory (such as $q$-hypergeometric series, mock modular forms, half-integral weight forms, or holomorphic quantum modular forms) also belong to the algebra of $q$-multiple zeta values? In particular, do negative weight Eisenstein series as studied in \cite{BOW} connect with $q$-multiple zeta values?
\end{question}

\begin{question}
    Can higher-dimensional generalizations of modular forms, such as Hilbert modular forms or Siegel modular forms, fit into some natural generalized $q$-multiple zeta value theory?
\end{question}

\begin{question}
    What are the arithmetic and asymptotic properties of the Fourier coefficients of $q$-multiple zeta values? Are there natural families whose coefficients are well-behaved asymptotically and arithmetically?
\end{question}

\section*{Statements and Declarations}

The author declares that are no competing interests.

\section*{Data Availability}

There is no data associated with this manuscript.

\end{document}